\theoremstyle{definition}
\newtheorem*{defn*}{\protect\definitionname}
\theoremstyle{plain}
\newtheorem{thm}{\protect\theoremname}[section]
\theoremstyle{remark}
\newtheorem{rem}[thm]{\protect\remarkname}
\theoremstyle{plain}
\newtheorem{prop}[thm]{\protect\propositionname}
\theoremstyle{plain}
\newtheorem{lem}[thm]{\protect\lemmaname}
\theoremstyle{definition}
\newtheorem{example}[thm]{\protect\examplename}
\theoremstyle{plain}
\newtheorem{cor}[thm]{\protect\corollaryname}
\newcommand*{\e}{\mathrm{e}}
\renewcommand*{\i}{\mathrm{i}}
\newcommand{\R}{\mathbb{R}}
\newcommand{\N}{\mathbb{N}}
\newcommand{\dom}{\operatorname{dom}}
\newcommand{\ran}{\operatorname{ran}}
\newcommand{\spt}{\operatorname{spt}}
\renewcommand{\d}{\,\mathrm{d}}
\renewcommand{\Re}{\operatorname{Re}}
\newcommand{\curl}{\operatorname{curl}}
\newcommand{\dive}{\operatorname{div}}
\renewcommand{\tilde}{\widetilde}
\theoremstyle{definition}
\newtheorem{hyp}{Hypotheses}
\providecommand{\corollaryname}{Corollary}
\providecommand{\definitionname}{Definition}
\providecommand{\examplename}{Example}
\providecommand{\lemmaname}{Lemma}
\providecommand{\propositionname}{Proposition}
\providecommand{\remarkname}{Remark}
\providecommand{\theoremname}{Theorem}
\begin{document}
\title{Well-posedness for a general class of differential inclusions}
\author{Sascha Trostorff\thanks{Mathematisches Seminar, CAU Kiel, Germany, e-mail: trostorff@math.uni-kiel.de}}

\maketitle
\textbf{Abstract. }We consider an abstract class of differential inclusions,
which covers differential-algebraic and non-autonomous problems as
well as problems with delay. Under weak assumptions on the operators
involved, we prove the well-posedness of those differential inclusions
in a pure Hilbert space setting. Moreover, we study the causality
of the associated solution operator. The theory is illustrated by
an application to a semistatic quasilinear variant of Maxwell's equations.\medskip{}

\textbf{Keywords: }differential inclusions, maximal monotone relations,
non-autonomous problems, causality, Maxwell's equations\textbf{\medskip{}
}

\textbf{2010 MSC: }34G25, 46N20, 35F60\textbf{\medskip{}
}

\section{Introduction }

In \cite{Picard} it was shown that most (if not all) linear autonomous
equations in mathematical physics share a common form, namely 
\[
(\partial_{t}\mathcal{M}+\mathcal{A})U=F,
\]
where $\partial_{t}$ stands for the derivative with respect to time,
$\mathcal{M}$ is a suitable bounded operator in space-time, which
commutes with $\partial_{t}$ and $\mathcal{A}$ is a skew-selfadjoint
operator in space, which in applications is a spatial differential
operator. This result was generalised by the author in \cite{Trostorff2012_NA,Trostorff2012_nonlin_bd,Trostorff_2011}
to the case of differential inclusions, where the operator $\mathcal{A}$
is replaced by a maximal monotone relation. The resulting problem
then takes the form 
\[
(U,F)\in\partial_{t}\mathcal{M}+\mathcal{A}.
\]
This generalisation allows to study certain non-linear problems, especially
equations describing physical phenomena with hysteresis effects. A
particular case of an operator $\mathcal{M}$ is given by $\mathcal{M}=M_{0}+\partial_{t}^{-1}M_{1}$
for some bounded spatial operators $M_{0},M_{1}.$ The resulting problem
then takes the simpler form 
\[
(U,F)\in\partial_{t}M_{0}+M_{1}+\mathcal{A}.
\]
Replacing now $M_{0},M_{1}$ by operator-valued multiplication operators,
the resulting problem becomes non-autonomous. These problems were
studied in \cite{Picard2013_nonauto} for the case of skew-selfadjoint
operators $\mathcal{A}$ and in \cite{Trostorff2013_nonautoincl}
for the case of a maximal monotone relation $\mathcal{A}.$ However,
in the case of maximal monotone relations, the authors of \cite{Trostorff2013_nonautoincl}
had to restrict the class of admissible multiplication operators $M_{0}$
and $M_{1}.$ Finally, in \cite{Waurick2015_nonauto} Waurick proved
a well-posedness result for a very abstract class of non-autonomous
differential equations, where $M_{0}$ and $M_{1}$ are replaced by
arbitrary space-time operators $\mathcal{M},\mathcal{N},$ where $\mathcal{M}$
should have a bounded commutator with $\partial_{t}$. With this result
he was able to generalise both the results of \cite{Picard} and \cite{Picard2013_nonauto}.
However, it does not cover the result for the inclusions. It is the
purpose of that article to provide a solution theory for differential
inclusions of the form 
\begin{equation}
(U,F)\in\partial_{t}\mathcal{M}+\mathcal{N}+\mathcal{A},\label{eq:prob}
\end{equation}
where $\mathcal{M}$ and $\mathcal{N}$ are space-time operators and
$\mathcal{A}$ is a maximal monotone relation. We hereby generalise
the results of \cite{Trostorff2012_nonlin_bd,Trostorff2013_nonautoincl}
and \cite{Waurick2015_nonauto} and provide a unified solution theory
for a broad class of problems.

Of course the problem of non-autonomous differential inclusions was
studied in the literature before. We just mention some classical approaches,
to tackle this problem. A standard approach for inclusions given in
form of a Cauchy problem (i.e. $\mathcal{M}=1$ and time dependent
$\mathcal{A}$ in \prettyref{eq:prob}) uses the concept of evolution
families introduced by Kato \cite{Kato1953} for evolution equations
and generalised to inclusions by Crandall and Pazy in \cite{Crandall1972}.
Other approaches use approximations by replacing the time derivative
in \prettyref{eq:prob} by the difference quotient and then proving
that the corresponding solutions converge in a suitable sense (e.g.
\cite{Evans1977,Kobayasi1984}). A third approach uses the notion
of integral solutions introduced by Bénilan in \cite{Benilan1972}
for autonomous problems and generalised in \cite{Kobayasi1984} to
the non-autonomous case. 

In contrast to all these classical approaches, we assume the relation
$\mathcal{A}$ to be time-independent. The non-autonomous behaviour
enters the equation via the bounded space-time operators $\mathcal{M}$
and $\mathcal{N}.$ This has the big advantage that we do not have
any problems with time-dependent domains. 

The article is structured as follows: In the next section we present
the underlying Hilbert space framework and recall some basic facts
of maximal monotone relations. Section 3 is devoted to the well-posedness
result for inclusions of the form \prettyref{eq:prob} in an exponentially
weighted $L_{2}$-space. In Section 4 we prove under stronger assumptions
the causality of the solution operator and its independence of the
particular choice of the exponential weight (see \prettyref{thm:independence}
for the precise statement). Finally, we apply our results to a semistatic
quasilinear variant of Maxwell's equations and thereby generalise
the result of \cite{Milani_Picard1995}.

\section{Preliminaries}

We begin by introducing the Hilbert space setting we are working with
throughout this article. Throughout let $H$ denote a Hilbert space
(real or complex, in case of a real Hilbert space one can ignore all
forthcoming occurring real parts) with inner product $\langle\cdot,\cdot\rangle$
and induced norm $|\cdot|.$ Following \cite{Picard_McGhee,Picard}
we introduce the following weighted $L_{2}$-space and the derivative
operator defined on it.
\begin{defn*}
Let $\rho\geq0.$ We define the following space of (equivalence classes
of) square integrable functions with respect to an exponentially weighted
Lebesgue measure 
\[
L_{2,\rho}(\R;H)\coloneqq\left\{ f:\R\to H\,;\,f\text{ Bochner-measurable, }\intop_{\R}|f(t)|^{2}\,\e^{-2\rho t}\d t<\infty\right\} .
\]
Clearly, $L_{2,\rho}(\R;H)$ becomes a Hilbert space equipped with
the usual inner product 
\[
\langle f,g\rangle_{\rho}\coloneqq\intop_{\R}\langle f(t),g(t)\rangle\,\e^{-2\rho t}\d t\quad(f,g\in L_{2,\rho}(\R;H)).
\]
We denote the induced norm by $|\cdot|_{\rho}.$ Moreover, we define
the operator $\partial_{t,\rho}$ as the closure of the operator 
\[
C_{c}^{\infty}(\R;H)\subseteq L_{2,\rho}(\R;H)\to L_{2,\rho}(\R;H):\,\phi\mapsto\phi',
\]
where $C_{c}^{\infty}(\R;H)$ denotes the space of $H$-valued arbitrarily
differentiable functions on $\R$ with compact support. 
\end{defn*}
\begin{rem}
\label{rem:derivative}\begin{enumerate}[(a)]

\item The domain of $\partial_{t,\rho}$ consists of those elements
in $L_{2,\rho}(\R;H)$ whose distributional derivative lies again
in $L_{2,\rho}(\R;H).$

\item For $\rho=0$ the space $L_{2,0}(\R;H)$ is the usual $L_{2}$-space
and the derivative $\partial_{t,0}$ coincides with the classical
weak derivative with domain $H^{1}(\R;H).$

\end{enumerate}
\end{rem}

We recall some basic properties of $\partial_{t,\rho}$ and refer
to \cite{Kalauch2011,Picard2014_survey} for the respective proofs.
\begin{prop}
Let $\rho\geq0.$ Then the operator $\partial_{t,\rho}$ is normal
with $\partial_{t,\rho}^{\ast}=-\partial_{t,\rho}+2\rho.$ Moreover,
$\sigma(\partial_{t,\rho})=\left\{ \i t+\rho\,;\,t\in\R\right\} $.
In particular, if $\rho>0,$ then $\partial_{t,\rho}$ is boundedly
invertible with $\|\partial_{t,\rho}^{-1}\|=\frac{1}{\rho}$ and 
\[
\left(\partial_{t,\rho}^{-1}f\right)(t)=\intop_{-\infty}^{t}f(s)\d s\quad(f\in L_{2,\rho}(\R;H),t\in\R).
\]
\end{prop}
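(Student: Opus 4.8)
The plan is to diagonalise $\partial_{t,\rho}$ by means of the Fourier--Laplace transform and then read every assertion off from elementary properties of multiplication operators. First I would introduce the unitary exponential-weight operator
\[
\exp(-\rho\m)\colon L_{2,\rho}(\R;H)\to L_{2,0}(\R;H),\qquad\bigl(\exp(-\rho\m)f\bigr)(t)\coloneqq\e^{-\rho t}f(t),
\]
and check, via the product rule on $C_{c}^{\infty}(\R;H)$ and then passing to closures, that it conjugates $\partial_{t,\rho}$ onto $\partial_{t,0}+\rho$; the coincidence of domains is immediate from the description of $\dom(\partial_{t,\rho})$ through the distributional derivative in \prettyref{rem:derivative}. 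Next I would invoke the classical fact that the (unitarily extended) Fourier transform $\mathcal{F}$ on $L_{2,0}(\R;H)=L_{2}(\R;H)$ satisfies $\mathcal{F}\partial_{t,0}\mathcal{F}^{\ast}=\i\m$, where $\m$ denotes the maximal multiplication-by-the-argument operator. Composing, $\mathcal{L}_{\rho}\coloneqq\mathcal{F}\exp(-\rho\m)$ is unitary from $L_{2,\rho}(\R;H)$ onto $L_{2}(\R;H)$ and
\[
\partial_{t,\rho}=\mathcal{L}_{\rho}^{\ast}(\i\m+\rho)\mathcal{L}_{\rho}.
\]

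\textbf{Normality, adjoint and spectrum.} Multiplication by the measurable function $t\mapsto\i t+\rho$ is a normal operator, hence so is its unitary transform $\partial_{t,\rho}$. Its adjoint is multiplication by $\overline{\i t+\rho}=-\i t+\rho=-(\i t+\rho)+2\rho$, which gives $\partial_{t,\rho}^{\ast}=-\partial_{t,\rho}+2\rho$. Since the spectrum of a multiplication operator equals the essential range of its symbol, $\sigma(\partial_{t,\rho})=\sigma(\i\m+\rho)=\{\i t+\rho\,;\,t\in\R\}$. For $\rho>0$ we then have $0\notin\sigma(\partial_{t,\rho})$, so $\partial_{t,\rho}$ is boundedly invertible, and for a normal operator $\|\partial_{t,\rho}^{-1}\|=\bigl(\operatorname{dist}(0,\sigma(\partial_{t,\rho}))\bigr)^{-1}=\tfrac{1}{\rho}$, the minimum of $|\i t+\rho|=\sqrt{t^{2}+\rho^{2}}$ over $t\in\R$ being attained at $t=0$.

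\textbf{The integral representation.} For $f\in L_{2,\rho}(\R;H)$ put $g(t)\coloneqq\intop_{-\infty}^{t}f(s)\d s$ (Cauchy--Schwarz shows the defining integral converges for each $t$). Writing $\e^{-\rho t}g(t)=\intop_{-\infty}^{t}\e^{-\rho(t-s)}\bigl(\e^{-\rho s}f(s)\bigr)\d s$ exhibits $\e^{-\rho\,\cdot}g$ as the convolution of $\e^{-\rho\,\cdot}f\in L_{2}(\R;H)$ with the kernel $k$ given by $k(s)=\e^{-\rho s}$ for $s\geq0$ and $k(s)=0$ for $s<0$, which lies in $L_{1}(\R)$ with $\|k\|_{L_{1}}=\tfrac{1}{\rho}$; Young's inequality then yields $g\in L_{2,\rho}(\R;H)$ with $|g|_{\rho}\leq\tfrac{1}{\rho}|f|_{\rho}$. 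Since $g'=f$ in the distributional sense, \prettyref{rem:derivative} gives $g\in\dom(\partial_{t,\rho})$ and $\partial_{t,\rho}g=f$, i.e. $g=\partial_{t,\rho}^{-1}f$ (incidentally reconfirming the bound $\tfrac{1}{\rho}$).

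\textbf{Main difficulty.} Essentially everything here is bookkeeping on multiplication operators; the one point that genuinely requires an argument is the intertwining $\mathcal{F}\partial_{t,0}\mathcal{F}^{\ast}=\i\m$ \emph{on the correct (maximal) domains}, i.e.\ verifying that $C_{c}^{\infty}(\R;H)$ is a core for $\partial_{t,0}$ (equivalently, that $\mathcal{F}[C_{c}^{\infty}(\R;H)]$ is a core for $\i\m$), which is what legitimises taking closures on both sides. Once this is granted, normality, the adjoint formula, the spectrum and the resolvent bound all follow simultaneously from the multiplication-operator model, while the explicit primitive formula is an independent elementary verification.
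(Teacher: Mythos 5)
The paper gives no proof of this proposition (it defers to \cite{Kalauch2011,Picard2014_survey}), and your argument is precisely the standard one from those references: conjugate $\partial_{t,\rho}$ by the unitary weight $\exp(-\rho\m)$ and the Fourier transform to obtain the multiplication operator $\i\m+\rho$, read off normality, the adjoint, the spectrum and the resolvent norm from the multiplication-operator model, and verify the primitive formula directly via Young's inequality. Your proof is correct, and you rightly isolate the core property of $\partial_{t,0}$ (equivalently, Remark 1(b) of the paper) as the only step needing genuine justification.
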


\begin{rem}
It should be noted that the bounded invertibility of $\partial_{t,\rho}$
just holds, since we deal with the whole real line and not just with
the positive real line or an interval. The main reason is that on
the positive real line or an interval, we need to impose initial conditions
to obtain invertibility of $\partial_{t,\rho}$. By dealing with the
whole real line, we implicitly impose a vanishing initial condition
at $-\infty$. For the treatment of differential inclusions on $\mathbb{R}_{\geq0}$
within the framework introduced here, we refer to \cite{Trostorff2012_NA}.
\end{rem}

With this operator at hand, we are able to define a scale of associated
Hilbert spaces called the Sobolev chain associated with $\partial_{t,\rho}$,
\cite{Picard_McGhee} (see also \cite{Nagel1997,engel2000one}, where
these spaces are called Sobolev towers). 
\begin{prop}
Let $\rho>0.$ For $k\in\N$ we define the spaces 
\[
H_{\rho}^{k}(\R;H)\coloneqq\dom(\partial_{t,\rho}^{k})
\]
equipped with the inner product 
\[
\langle u,v\rangle_{\rho,k}\coloneqq\langle\partial_{t,\rho}^{k}u,\partial_{t,\rho}^{k}v\rangle_{\rho}\quad(u,v\in\dom(\partial_{t,\rho}^{k})).
\]
Moreover, we set $H_{\rho}^{-k}(\R;H)$ as the completion of $L_{2,\rho}(\R;H)$
with respect to the norm induced by 
\[
\langle u,v\rangle_{\rho,-k}\coloneqq\langle\partial_{t,\rho}^{-k}u,\partial_{t,\rho}^{-k}v\rangle_{\rho}\quad(u,v\in L_{2,\rho}(\R;H)).
\]
For $k,j\in\mathbb{Z}$ with $k\geq j$ we have that 
\[
H_{\rho}^{k}(\R;H)\hookrightarrow H_{\rho}^{j}(\R;H)
\]
and the operator 
\[
\partial_{t,\rho}^{k-j}:C_{c}^{\infty}(\R;H)\subseteq H_{\rho}^{k}(\R;H)\to H_{\rho}^{j}(\R;H)
\]
extends to a unitary operator again denoted by $\partial_{t,\rho}^{k-j}.$
\end{prop}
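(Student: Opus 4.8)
The plan is to deduce everything from the bounded invertibility of $\partial_{t,\rho}$ recorded in the previous proposition (which is exactly where the hypothesis $\rho>0$ enters): for every $n\in\mathbb{Z}$ the operator $\partial_{t,\rho}^{n}$ is then closed and densely defined, it is bounded with $\|\partial_{t,\rho}^{n}\|\le\rho^{n}$ when $n\le0$, it has $\ran(\partial_{t,\rho}^{n})=L_{2,\rho}(\R;H)$ and inverse $\partial_{t,\rho}^{-n}$, one has $\dom(\partial_{t,\rho}^{k})\subseteq\dom(\partial_{t,\rho}^{j})$ whenever $k\ge j\ge0$, and $C_{c}^{\infty}(\R;H)\subseteq\dom(\partial_{t,\rho}^{n})$ for every $n\ge0$, with $\partial_{t,\rho}^{n}$ acting as the classical $n$-th derivative there.

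First I would check that the spaces are Hilbert spaces and that the embeddings hold. For $k\ge0$ the map $\partial_{t,\rho}^{k}\colon\dom(\partial_{t,\rho}^{k})\to L_{2,\rho}(\R;H)$ is a linear bijection (injective since $\partial_{t,\rho}$ is, surjective since $\partial_{t,\rho}^{-k}$ is a bounded right inverse mapping into $\dom(\partial_{t,\rho}^{k})$) which is isometric by the very definition of $\langle\cdot,\cdot\rangle_{\rho,k}$; hence $\langle\cdot,\cdot\rangle_{\rho,k}$ is an inner product, $H_{\rho}^{k}(\R;H)$ is complete as an isometric preimage of a Hilbert space, and $\partial_{t,\rho}^{k}\colon H_{\rho}^{k}(\R;H)\to H_{\rho}^{0}(\R;H)=L_{2,\rho}(\R;H)$ is unitary. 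For $k=-m<0$, boundedness and injectivity of $\partial_{t,\rho}^{-m}$ make $\langle\cdot,\cdot\rangle_{\rho,-m}$ an inner product on $L_{2,\rho}(\R;H)$ dominated by $\rho^{-m}|\cdot|_{\rho}$, so its completion $H_{\rho}^{-m}(\R;H)$ is a Hilbert space into which $L_{2,\rho}(\R;H)$ embeds continuously and densely. The embedding $H_{\rho}^{k}\hookrightarrow H_{\rho}^{j}$ for $k\ge j\ge0$ is the estimate $|u|_{\rho,j}=|\partial_{t,\rho}^{j-k}\partial_{t,\rho}^{k}u|_{\rho}\le\rho^{\,j-k}|u|_{\rho,k}$; the same estimate on $L_{2,\rho}(\R;H)$ gives a continuous map $H_{\rho}^{-a}\to H_{\rho}^{-m}$ for $0\le a\le m$, which is injective because if $u_{n}\to0$ in $H_{\rho}^{-m}$ while $\partial_{t,\rho}^{-a}u_{n}\to v$ in $L_{2,\rho}(\R;H)$, then $\partial_{t,\rho}^{-m}u_{n}=\partial_{t,\rho}^{a-m}\partial_{t,\rho}^{-a}u_{n}$ converges both to $\partial_{t,\rho}^{a-m}v$ and to $0$, forcing $v=0$; the mixed case $k\ge0>j$ follows by composing $H_{\rho}^{k}\hookrightarrow L_{2,\rho}(\R;H)\hookrightarrow H_{\rho}^{j}$.

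For the unitarity of $\partial_{t,\rho}^{k-j}$, put $n\coloneqq k-j\ge0$ and write $j=-m$ (allowing $m=0$), so $n=k+m$. On $C_{c}^{\infty}(\R;H)$ the operator acts as $\phi\mapsto\phi^{(n)}$, and since such $\phi$ lie in all the relevant domains, repeatedly collapsing $\partial_{t,\rho}^{-1}\partial_{t,\rho}=1$ gives $\partial_{t,\rho}^{-m}\phi^{(n)}=\partial_{t,\rho}^{k}\phi$, whence $|\phi^{(n)}|_{\rho,j}=|\partial_{t,\rho}^{-m}\phi^{(n)}|_{\rho}=|\partial_{t,\rho}^{k}\phi|_{\rho}=|\phi|_{\rho,k}$ for all $\phi\in C_{c}^{\infty}(\R;H)$. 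Thus $\partial_{t,\rho}^{k-j}$ is an isometry from $(C_{c}^{\infty}(\R;H),|\cdot|_{\rho,k})$ into $H_{\rho}^{j}(\R;H)$; since $C_{c}^{\infty}(\R;H)$ is dense in $H_{\rho}^{k}(\R;H)$ (see the next paragraph), it extends uniquely to an isometry $T\colon H_{\rho}^{k}(\R;H)\to H_{\rho}^{j}(\R;H)$. The range of $T$ is closed and contains $\{\phi^{(n)}\colon\phi\in C_{c}^{\infty}(\R;H)\}$, which is dense in $L_{2,\rho}(\R;H)$: any $g$ in its orthogonal complement satisfies that $\e^{-2\rho\,\cdot}g$ has vanishing $n$-th distributional derivative, i.e.\ $g$ equals $\e^{2\rho\,\cdot}$ times an $H$-valued polynomial of degree less than $n$, and no such nonzero $g$ lies in $L_{2,\rho}(\R;H)$. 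Since $L_{2,\rho}(\R;H)$ is dense in $H_{\rho}^{j}(\R;H)$ (trivially if $j\ge0$, by construction if $j<0$), $T$ is onto, hence unitary, and it is the asserted extension (for $j\ge0$ unitarity may alternatively be read off the second paragraph).

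The only step that is not soft functional analysis is the density of $C_{c}^{\infty}(\R;H)$ in $H_{\rho}^{k}(\R;H)$: for $k<0$ it is immediate since $C_{c}^{\infty}(\R;H)$ is dense in $L_{2,\rho}(\R;H)$ and the $H_{\rho}^{k}$-norm is weaker, but for $k\ge2$ it does not follow formally from $C_{c}^{\infty}(\R;H)$ being a core of $\partial_{t,\rho}$ alone. Here I would use the standard mollification-and-cut-off procedure: mollifying $u\in\dom(\partial_{t,\rho}^{k})$ with a compactly supported mollifier $\eta_{\varepsilon}$ produces smooth approximants whose derivatives up to order $k$ all converge in $L_{2,\rho}(\R;H)$ (the weighted estimate $|f\ast\eta_{\varepsilon}|_{\rho}\le\e^{\rho\varepsilon}|f|_{\rho}$ giving the required uniform bound), and a subsequent multiplication by $\chi(\cdot/R)$ with $\chi\in C_{c}^{\infty}(\R)$, $\chi\equiv1$ near $0$, removes the noncompactness, the Leibniz remainder terms being controlled by the prefactors $R^{-\ell}$ together with the exponential decay of the weight. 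I expect this cut-off bookkeeping to be the main (though still routine) effort; everything else is organised around the bounded invertibility of $\partial_{t,\rho}$.
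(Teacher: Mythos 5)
Your argument is correct. Note, however, that the paper does not prove this proposition at all: it is recalled from the literature (Picard--McGhee and the references on Sobolev towers), so there is no in-paper proof to compare against; what you have written is essentially the standard construction of the Sobolev chain of a boundedly invertible normal operator, correctly adapted to $\partial_{t,\rho}$. All the individual steps check out: the bijectivity and isometry of $\partial_{t,\rho}^{k}$ onto $L_{2,\rho}(\R;H)$ for $k\ge0$, the injectivity argument for the extended embeddings between the negative-order completions, the isometry-plus-dense-range argument for unitarity, and the mollification-and-cut-off proof that $C_{c}^{\infty}(\R;H)$ is dense in $H_{\rho}^{k}(\R;H)$ (the weighted estimate $|\tau_{-s}f|_{\rho}=\e^{-\rho s}|f|_{\rho}$ indeed gives the uniform bound $\e^{\rho\varepsilon}$ you quote). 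Two small remarks. First, your orthogonal-complement argument for the density of $\{\phi^{(n)}\,;\,\phi\in C_{c}^{\infty}(\R;H)\}$ in $L_{2,\rho}(\R;H)$ works, but it is available more cheaply from what you already have: this set is the image of the dense set $C_{c}^{\infty}(\R;H)\subseteq H_{\rho}^{n}(\R;H)$ under the unitary map $\partial_{t,\rho}^{n}:H_{\rho}^{n}(\R;H)\to L_{2,\rho}(\R;H)$ established in your second paragraph. Second, the parenthetical claim that $L_{2,\rho}(\R;H)$ is dense in $H_{\rho}^{j}(\R;H)$ ``trivially if $j\ge0$'' is not meaningful for $j>0$ (the inclusion runs the other way); this is harmless since you only invoke it for $j\le0$ and treat $j>0$ separately, but the phrasing should be tightened.
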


We state a simple observation, which will be used several times later
on.
\begin{lem}
\label{lem:better_limit}Let $\rho>0$ and $k,j\in\mathbb{Z}$ with
$k\geq j.$ Let $(u_{n})_{n\in\mathbb{N}}$ be a sequence in $H_{\rho}^{k}(\R;H)$
such that $u_{n}\to u$ in $H_{\rho}^{j}(\R;H)$ for some $u\in H_{\rho}^{j}(\R;H)$
and assume that $(u_{n})_{n}$ is bounded in $H_{\rho}^{k}(\R;H).$
Then $u\in H_{\rho}^{k}(\R;H).$
\end{lem}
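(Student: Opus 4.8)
The plan is to exploit the fact that $H_\rho^k(\mathbb{R};H)$ is a Hilbert space in which bounded sets are weakly relatively compact, together with the continuity of the embedding $H_\rho^k(\mathbb{R};H)\hookrightarrow H_\rho^j(\mathbb{R};H)$. First I would observe that, since $(u_n)_n$ is bounded in the Hilbert space $H_\rho^k(\mathbb{R};H)$, by the Banach--Alaoglu theorem it has a subsequence $(u_{n_\ell})_\ell$ converging weakly in $H_\rho^k(\mathbb{R};H)$ to some $v\in H_\rho^k(\mathbb{R};H)$.

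Next I would transport this weak convergence down to $H_\rho^j(\mathbb{R};H)$: the inclusion map $H_\rho^k(\mathbb{R};H)\hookrightarrow H_\rho^j(\mathbb{R};H)$ is bounded and linear, hence weak-weak continuous, so $u_{n_\ell}\rightharpoonup v$ weakly in $H_\rho^j(\mathbb{R};H)$ as well. On the other hand, by hypothesis $u_{n_\ell}\to u$ strongly in $H_\rho^j(\mathbb{R};H)$, hence also weakly. Since weak limits in a Hilbert space are unique, $u=v$, and therefore $u=v\in H_\rho^k(\mathbb{R};H)$, which is the claim.

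The only point requiring a little care is making sure the abstract machinery applies verbatim: $H_\rho^k(\mathbb{R};H)$ is genuinely a Hilbert space with the inner product $\langle\cdot,\cdot\rangle_{\rho,k}$ (for $k\ge 0$ this is clear from the preceding proposition since $\partial_{t,\rho}^k$ is closed; for $k<0$ it holds by construction of the completion), and the embedding into $H_\rho^j(\mathbb{R};H)$ is continuous by the same proposition. Granting these facts, which are all recorded in the excerpt, there is no real obstacle — the argument is the standard ``bounded sequence converging in a weaker topology lands in the stronger space'' lemma, and the proof is essentially the three lines above. One could alternatively phrase it without subsequences by noting that every weakly convergent subnet must have limit $u$, so $(u_n)_n$ itself converges weakly to $u$ in $H_\rho^k(\mathbb{R};H)$; I expect the subsequence formulation to be the cleaner one to write out.
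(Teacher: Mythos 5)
Your proposal is correct and follows essentially the same route as the paper: extract a weakly convergent subsequence from the bounded sequence in $H_{\rho}^{k}(\R;H)$, push the weak convergence through the continuous embedding into $H_{\rho}^{j}(\R;H)$, and identify the weak limit with $u$ by uniqueness of limits. The paper's proof is exactly this argument, phrased as "without loss of generality $u_{n}\rightharpoonup v$", so there is nothing to add.
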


\begin{proof}
As $(u_{n})_{n}$ is bounded in $H_{\rho}^{k}(\R;H),$ we can assume
without loss of generality that $u_{n}\rightharpoonup v$ for some
$v\in H_{\rho}^{k}(\R;H).$ As $H_{\rho}^{k}(\R;H)\hookrightarrow H_{\rho}^{j}(\R;H)$
we infer that $u_{n}\rightharpoonup v$ in $H_{\rho}^{j}(\R;H)$ and
hence, $u=v\in H_{\rho}^{k}(\R;H).$
\end{proof}
Finally, we provide a useful characterisation for elements lying in
$H_{\rho}^{1}(\R;H).$
\begin{lem}[{\cite[Proposition 2.1]{Trostorff2013_nonautoincl}}]
\label{lem:difference_quotient} Let $\rho\geq0$, $u\in L_{2,\rho}(\R;H)$
and for $h>0$ we define 
\[
D_{h}u\coloneqq\frac{1}{h}(\tau_{h}u-u)\in L_{2,\rho}(\R;H),
\]
where $\tau_{h}u\coloneqq u(\cdot+h).$ Then, $u\in H_{\rho}^{1}(\R;H)$
if and only if $(D_{h}u)_{h\in]0,1]}$ is bounded in $L_{2,\rho}(\R;H).$
In each case we have that 
\[
D_{h}u\to\partial_{t,\rho}u\quad(h\to0+)
\]
in $L_{2,\rho}(\R;H).$ 
\end{lem}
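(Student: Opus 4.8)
The plan is to prove the two implications separately; the convergence $D_h u\to\partial_{t,\rho}u$ drops out of the easy implication and can then be reused to finish the other one. For the direction $u\in H^1_\rho(\R;H)\Rightarrow$ boundedness and convergence, I would first record two elementary properties of the translation operators $\tau_s$: the substitution $t\mapsto t-s$ gives $|\tau_s f|_\rho=\e^{\rho s}|f|_\rho$, so each $\tau_s$ is bounded on $L_{2,\rho}(\R;H)$ with norm $\le\e^{\rho}$ for $s\in[0,1]$; and $s\mapsto\tau_s f$ is continuous from $[0,1]$ into $L_{2,\rho}(\R;H)$, which one checks on continuous compactly supported $f$ and extends via the uniform bound and density. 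For $\phi\in C_c^\infty(\R;H)$ the fundamental theorem of calculus gives the pointwise identity $D_h\phi=\frac1h\int_0^h\tau_s\phi'\d s$, which by the two properties above is a valid identity of Bochner integrals in $L_{2,\rho}(\R;H)$. Since $C_c^\infty(\R;H)$ is a core for $\partial_{t,\rho}$ and both $D_h$ (for fixed $h$) and the averaging operator $g\mapsto\frac1h\int_0^h\tau_s g\d s$ are bounded on $L_{2,\rho}(\R;H)$, a density argument extends this to
\[
D_h u=\frac1h\int_0^h\tau_s\partial_{t,\rho}u\d s\qquad(u\in H^1_\rho(\R;H),\ h>0).
\]
From here $|D_h u|_\rho\le\e^{\rho}|\partial_{t,\rho}u|_\rho$ for $h\in]0,1]$, which is the claimed boundedness, and $|D_h u-\partial_{t,\rho}u|_\rho\le\sup_{s\in[0,h]}|\tau_s\partial_{t,\rho}u-\partial_{t,\rho}u|_\rho\to0$ as $h\to0+$ by strong continuity of translation.

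\textbf{The converse.} Assume $(D_h u)_{h\in]0,1]}$ is bounded in $L_{2,\rho}(\R;H)$. I would exploit weak compactness of bounded sets in this Hilbert space: choose $h_n\to0+$ with $D_{h_n}u\rightharpoonup v$ for some $v\in L_{2,\rho}(\R;H)$, and aim to identify $v$ as the distributional derivative of $u$, so that \prettyref{rem:derivative}(a) yields $u\in H^1_\rho(\R;H)$. Testing against $\phi\in C_c^\infty(\R;H)$ and performing the substitution $t\mapsto t-h$ in the forward-difference term, which produces a factor $\e^{2\rho h}$ because of the exponential weight, gives
\[
\langle D_h u,\phi\rangle_\rho=\left\langle u,\frac{\e^{2\rho h}\tau_{-h}\phi-\phi}{h}\right\rangle_\rho .
\]
As $h\to0+$ the integrand on the right converges uniformly, with supports in a fixed compact set, to $2\rho\phi-\phi'$, so $\langle D_h u,\phi\rangle_\rho\to\langle u,2\rho\phi-\phi'\rangle_\rho$; writing $\psi\coloneqq\phi\,\e^{-2\rho\cdot}$ and noting $\psi'=(\phi'-2\rho\phi)\e^{-2\rho\cdot}$ shows $\langle u,2\rho\phi-\phi'\rangle_\rho=-\int_\R\langle u(t),\psi'(t)\rangle\d t$, i.e. the action of the distributional derivative $u'$ on the test function $\psi$. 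Comparing with $\langle D_{h_n}u,\phi\rangle_\rho\to\langle v,\phi\rangle_\rho=\int_\R\langle v(t),\psi(t)\rangle\d t$ and letting $\phi$ (equivalently $\psi$) run through $C_c^\infty(\R;H)$ identifies $u'=v\in L_{2,\rho}(\R;H)$; alternatively, since $2\rho-\partial_{t,\rho}=\partial_{t,\rho}^{\ast}$ and $C_c^\infty(\R;H)$ is a core for $\partial_{t,\rho}^{\ast}$, one concludes $u\in\dom((\partial_{t,\rho}^{\ast})^{\ast})=\dom(\partial_{t,\rho})$ with $\partial_{t,\rho}u=v$. Either way $u\in H^1_\rho(\R;H)$, and then the easy direction applies and gives $D_h u\to\partial_{t,\rho}u$ along the full limit $h\to0+$ (so the limit does not depend on the chosen subsequence).

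\textbf{Main obstacle.} The computational core is the weighted change of variables together with the identification of $\lim_{h\to0+}\frac1h(\e^{2\rho h}\tau_{-h}\phi-\phi)$ — recognising it as $2\rho\phi-\phi'=\partial_{t,\rho}^{\ast}\phi$, or equivalently recognising the limit of the testing pairing as the distributional derivative paired with the reweighted test function $\phi\,\e^{-2\rho\cdot}$. The only genuinely functional-analytic step is the passage from a weak subsequential limit to actual membership $u\in\dom(\partial_{t,\rho})$, which rests on the closedness of $\partial_{t,\rho}$ (in the form of \prettyref{rem:derivative}(a), or of $(\partial_{t,\rho}^{\ast})^{\ast}=\partial_{t,\rho}$); everything else is routine.
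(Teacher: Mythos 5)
Your argument is correct and complete; note, however, that the paper itself offers no proof of this lemma to compare against -- it is quoted verbatim from \cite[Proposition 2.1]{Trostorff2013_nonautoincl}. Your route is a direct real-variable one: the identity $D_hu=\frac1h\int_0^h\tau_s\partial_{t,\rho}u\d s$ on the core $C_c^\infty(\R;H)$ plus strong continuity of translation handles the forward implication, and the converse is a weak-compactness argument in which the weighted change of variables produces the adjoint difference quotient $\frac1h(\e^{2\rho h}\tau_{-h}\phi-\phi)\to2\rho\phi-\phi'=\partial_{t,\rho}^{\ast}\phi$, whence $u\in\dom((\partial_{t,\rho}^{\ast})^{\ast})=\dom(\partial_{t,\rho})$ by closedness. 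The proof in the cited source (and the usual one in this literature) instead passes through the Fourier--Laplace transform, under which $\partial_{t,\rho}$ becomes multiplication by $\i t+\rho$ and $D_h$ becomes multiplication by $\frac{1}{h}(\e^{(\i t+\rho)h}-1)$, so that both the equivalence and the convergence reduce to dominated/monotone convergence of explicit scalar multipliers. Your version avoids the spectral representation entirely and is arguably more elementary and more portable; the transform proof is shorter once that machinery is in place. Two small points you should make explicit if you write this up: the identity of Bochner integrals in the forward direction is most cleanly verified by pairing both sides against an arbitrary $g\in L_{2,\rho}(\R;H)$ and applying Fubini, and in the converse the uniform convergence of $\frac1h(\e^{2\rho h}\tau_{-h}\phi-\phi)$ only yields convergence in $L_{2,\rho}(\R;H)$ because the supports stay inside the fixed compact set $\spt\phi+[0,1]$ for $h\in\left]0,1\right]$ -- both of which you gesture at correctly.
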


Besides the presented Hilbert space setting, we need the framework
of maximal monotone relations. For this topic and the proofs of the
subsequent results we refer to the monographs \cite{Brezis,Morosanu,papageogiou}.
\begin{defn*}
A binary relation $A\subseteq H\times H$ is called \emph{monotone},
if 
\[
\forall(u,v),(x,y)\in A:\:\Re\langle u-x,v-y\rangle\geq0.
\]

$A$ is called \emph{maximal monotone} if $A$ is monotone and for
each monotone relation $B\subseteq H\times H$ we have that 
\[
A\subseteq B\Rightarrow A=B.
\]

Moreover, we say that $A$ is \emph{$c$-(maximal) monotone} for some
$c\in\R,$ if $A-c\coloneqq\{(u,v-cu)\,;\,(u,v)\in A\}$ is (maximal)
monotone.
\end{defn*}
We need the following lifting result.
\begin{lem}[{\cite[Example 2.3.3]{Brezis}}]
\label{lem:lifting}Let $A\subseteq H\times H$ and $(\Omega,\Sigma,\mu)$
a measure space. We define 
\[
A_{\mu}\coloneqq\left\{ (u,v)\in L_{2}(\mu;H)\times L_{2}(\mu;H)\,;\,(u(t),v(t))\in A\quad(t\in\Omega\text{ a.e.})\right\} .
\]
If $A$ is monotone, then so is $A_{\mu}.$ Moreover, if $A$ is maximal
monotone and $(0,0)\in A$ it follows that $A_{\mu}$ is maximal monotone
as well.
\end{lem}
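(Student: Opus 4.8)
The plan is to treat the two assertions separately: monotonicity of $A_{\mu}$ is a one-line integration argument, while maximality is obtained from Minty's surjectivity criterion, namely that a monotone relation $B$ on a Hilbert space is maximal monotone if and only if $\ran(1+B)$ is the whole space.

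For the monotonicity of $A_{\mu}$ I would take two pairs $(u,v),(x,y)\in A_{\mu}$, use the defining pointwise inclusions to get $\Re\langle u(t)-x(t),v(t)-y(t)\rangle\geq 0$ for $\mu$-a.e.\ $t\in\Omega$, and integrate this inequality over $\Omega$ with respect to $\mu$, obtaining $\Re\langle u-x,v-y\rangle_{L_{2}(\mu;H)}\geq 0$.

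For maximality, since $A$ is maximal monotone, $1+A$ is a bijection onto $H$, so $J\coloneqq(1+A)^{-1}\colon H\to H$ is single-valued, everywhere defined and non-expansive, with $(Jw,w-Jw)\in A$ for every $w\in H$. Given $f\in L_{2}(\mu;H)$ I would set $u\coloneqq J\circ f$ and $v\coloneqq f-u$. Then $(u(t),v(t))=(Jf(t),f(t)-Jf(t))\in A$ for $\mu$-a.e.\ $t$, and $u+v=f$; it remains only to check $u,v\in L_{2}(\mu;H)$. This is where the hypothesis $(0,0)\in A$ enters: testing the monotonicity of $A$ with the pairs $(Jw,w-Jw)$ and $(0,0)$ gives $\Re\langle Jw,w-Jw\rangle\geq 0$, hence $|Jw|^{2}\leq\Re\langle Jw,w\rangle\leq|Jw|\,|w|$ and therefore $|Jw|\leq|w|$. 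Consequently $|u(t)|\leq|f(t)|$ and $|v(t)|\leq 2|f(t)|$ for $\mu$-a.e.\ $t$, and $u$ is strongly $\mu$-measurable as the composition of the strongly measurable $f$ with the continuous $J$; thus $u,v\in L_{2}(\mu;H)$ and $f\in\ran(1+A_{\mu})$. Since $f$ was arbitrary, $\ran(1+A_{\mu})=L_{2}(\mu;H)$, and monotonicity together with Minty's criterion yields that $A_{\mu}$ is maximal monotone.

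The main obstacle — in fact the only step that is not purely formal — is this final integrability check: without the normalisation $(0,0)\in A$ the resolvent satisfies merely $|Jw-Jw'|\leq|w-w'|$, which does not suffice to keep $J\circ f$ in $L_{2}(\mu;H)$ over a possibly infinite measure space, so the role of $(0,0)\in A$ in upgrading non-expansiveness to the bound $|Jw|\leq|w|$ must be identified carefully. One should also remark in passing that the condition defining $A_{\mu}$ is insensitive to modification on $\mu$-null sets, so $A_{\mu}$ is a well-defined relation on equivalence classes.
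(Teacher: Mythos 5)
Your argument is correct. Note that the paper itself gives no proof of this lemma; it is quoted from \cite[Example 2.3.3]{Brezis}, and your argument is precisely the standard one found there: pointwise integration for monotonicity, and Minty's surjectivity criterion for maximality, applying the scalar resolvent $J=(1+A)^{-1}$ pointwise to $f$ and using $(0,0)\in A$ to upgrade non-expansiveness of $J$ to the bound $|Jw|\leq|w|$, which is exactly what keeps $J\circ f$ in $L_{2}(\mu;H)$ when $\mu(\Omega)=\infty$ (the situation the paper actually needs, with $\Omega=\R$ and $\d\mu=\e^{-2\rho t}\d t$). You have correctly identified the one non-formal step, including the measurability of $J\circ f$, so there is nothing to add.
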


\begin{rem}
If $\mu(\Omega)<\infty$ then the assumption $(0,0)\in A$ can be
dropped. However, since we want to apply this lifting result to $\Omega=\mathbb{R}$
and $\mu=\e^{-2\rho t}\d t$, we need to impose this condition in
our main result. If one just deals with $\mathbb{R}_{\geq0}$ as time
horizon, one obtains $\mu(\mathbb{R}_{\geq0})=\frac{1}{2\rho}<\infty$
and hence, this additional condition is superfluous.
\end{rem}

The maximal monotonicity of a monotone relation $A$ can be characterised
by a range condition on $A$. This is the celebrated Theorem by Minty.
\begin{thm}[Minty, \cite{Minty}]
\label{thm:Minty} Let $A\subseteq H\times H$ be monotone. Then
the following statements are equivalent:

\begin{enumerate}[(i)]

\item $A$ is maximal monotone,

\item For each $\lambda>0$ and each $z\in H$ there exists $u\in H$
such that $(u,z)\in1+\lambda A,$ i.e. there is $v\in H$ with $(u,v)\in A$
and $u+\lambda v=z.$ 

\item There exists $\lambda>0$ such that for each $z\in H$ there
exists $u\in H$ with $(u,z)\in1+\lambda A.$

\end{enumerate}
\end{thm}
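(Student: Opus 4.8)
The plan is to establish the implications along the cycle (ii)$\Rightarrow$(iii)$\Rightarrow$(i)$\Rightarrow$(ii). The implication (ii)$\Rightarrow$(iii) is immediate, as (iii) is merely (ii) for one fixed value of $\lambda$. For (iii)$\Rightarrow$(i) I would argue directly: let $B\subseteq H\times H$ be monotone with $A\subseteq B$ and pick $(x,y)\in B$; by (iii) there are $u,v\in H$ with $(u,v)\in A\subseteq B$ and $u+\lambda v=x+\lambda y$, so that $u-x=-\lambda(v-y)$, and monotonicity of $B$ applied to $(u,v)$ and $(x,y)$ yields $0\leq\Re\langle u-x,v-y\rangle=-\lambda|v-y|^{2}$. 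Hence $v=y$ and then $u=x$, so $(x,y)\in A$; this shows $B=A$, i.e. $A$ is maximal monotone.

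The substance of the theorem is the implication (i)$\Rightarrow$(ii). The first step I would take is a reduction to $\lambda=1$: replacing $A$ by $\lambda A\coloneqq\{(u,\lambda v)\,;\,(u,v)\in A\}$, which is again maximal monotone, it suffices to prove $\ran(1+A)=H$ for every maximal monotone $A$. Two soft observations enter here. First, for $(u,v),(u',v')\in A$ the monotonicity inequality gives $|(u+v)-(u'+v')|^{2}=|u-u'|^{2}+2\Re\langle u-u',v-v'\rangle+|v-v'|^{2}\geq|u-u'|^{2}$, so $(1+A)^{-1}$ is a single-valued, nonexpansive map on $\ran(1+A)$. Second, a maximal monotone relation is closed in $H\times H$: if $(u_{n},v_{n})\in A$ converges to $(u,v)$, then passing to the limit in $\Re\langle u_{n}-x,v_{n}-y\rangle\geq0$ shows that $A\cup\{(u,v)\}$ is monotone, hence equals $A$. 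Together these give that $\ran(1+A)$ is a nonempty closed subset of $H$, and it remains to promote this to $\ran(1+A)=H$.

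This last step is the main obstacle, and I would resolve it via the Cayley transform $\mathcal{V}\colon H\times H\to H\times H$, $\mathcal{V}(x,y)\coloneqq(x+y,x-y)$, which is a linear bijection satisfying $|(x+y)-(x'+y')|^{2}-|(x-y)-(x'-y')|^{2}=4\Re\langle x-x',y-y'\rangle$. By this identity, $A\mapsto\mathcal{V}[A]$ is an inclusion-preserving bijection from the monotone relations onto the nonexpansive relations (i.e. relations $C$ with $|b-b'|\leq|a-a'|$ for all $(a,b),(a',b')\in C$, equivalently graphs of nonexpansive maps on $\dom C$); in particular $A$ is maximal monotone if and only if $\mathcal{V}[A]$ is maximal among nonexpansive relations, and $\dom\mathcal{V}[A]=\ran(1+A)$. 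I would then invoke the Kirszbraun--Valentine extension theorem: the nonexpansive map $\mathcal{V}[A]$ extends to a nonexpansive map on all of $H$; the graph of this extension is a nonexpansive relation containing $\mathcal{V}[A]$, so by maximality it coincides with $\mathcal{V}[A]$, forcing $\dom\mathcal{V}[A]=H$, i.e. $\ran(1+A)=H$. (Alternatively one shows that $\ran(1+A)$ is open and uses connectedness of $H$; openness is then the sticking point and, via the same picture, still amounts to extending a nonexpansive map.)
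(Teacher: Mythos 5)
The paper offers no proof of this theorem: it is quoted from the literature (\cite{Minty}), so there is nothing in-paper to compare against, and your proposal must stand on its own --- which it does. The cycle (ii)$\Rightarrow$(iii)$\Rightarrow$(i)$\Rightarrow$(ii) is correctly organised; your argument for (iii)$\Rightarrow$(i) is exactly the standard one, and the reduction of (i)$\Rightarrow$(ii) to $\lambda=1$ is sound because $A\mapsto\lambda A$ is an inclusion-preserving bijection of the monotone relations onto themselves. The Cayley-transform picture is also accurate: your polarisation identity converts monotonicity of $A$ into nonexpansivity of $\mathcal{V}[A]$, one has $\dom\mathcal{V}[A]=\ran(1+A)$, maximality is preserved under the inclusion-preserving bijection $\mathcal{V}$, and so surjectivity of $1+A$ becomes the statement that a maximal nonexpansive map is everywhere defined, which Kirszbraun--Valentine supplies. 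Two remarks. First, the entire analytic content is thereby delegated to Kirszbraun's theorem, which in an infinite-dimensional Hilbert space is itself nontrivial (its proof requires the finite-intersection property of balls together with weak compactness and a Zorn argument); this is legitimate --- Minty's surjectivity theorem and Kirszbraun's extension theorem are classically known to be essentially equivalent --- but it means your proof is only as elementary as the extension theorem you invoke, whereas the monographs the paper cites for this material (Brezis, Hu--Papageorgiou) obtain $\ran(1+\lambda A)=H$ by a direct variational or fixed-point argument. Second, your observation that $\ran(1+A)$ is closed is correct but becomes redundant once Kirszbraun is invoked; only the nonexpansivity of $(1+A)^{-1}$, i.e.\ that $\mathcal{V}[A]$ is single-valued with Lipschitz constant $1$, actually enters the final step.
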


\begin{example}
\label{exa:Time_derivative}For $\rho>0$ the operator $\partial_{t,\rho}$
is $\rho$-maximal monotone. The monotonicity follows by 
\[
\langle\partial_{t,\rho}\phi,\phi\rangle_{\rho}=\langle\phi,\partial_{t,\rho}^{\ast}\phi\rangle_{\rho}=\langle\phi,-\partial_{t,\rho}\phi\rangle_{\rho}+2\rho|\phi|_{\rho}^{2}\quad(\phi\in\dom(\partial_{t,\rho})),
\]
which yields 
\[
\Re\langle\partial_{t,\rho}\phi,\phi\rangle_{\rho}\geq\rho|\phi|_{\rho}^{2}\quad(\phi\in\dom(\partial_{t,\rho})).
\]
Moreover, for $f\in L_{2,\rho}(\R;H)$ we set 
\[
u\coloneqq\rho\partial_{t,\rho}^{-1}f\in\dom(\partial_{t,\rho})
\]
and obtain 
\[
u+\frac{1}{\rho}(\partial_{t,\rho}-\rho)u=\frac{1}{\rho}\partial_{t,\rho}u=f,
\]
which shows the maximal monotonicity by \prettyref{thm:Minty}. 
\end{example}

As a consequence of Minty's theorem and the definition of monotonicity
we obtain the following proposition.
\begin{prop}[{\cite[Proposition 2.6]{Brezis}}]
 Let $A\subseteq H\times H$ be maximal monotone. Then $(1+\lambda A)^{-1}$
is a Lipschitz-continuous mapping defined on the whole Hilbert space
$H$ with\footnote{We denote the smallest Lipschitz-constant for a Lipschitz continuous
mapping $f$ by $|f|_{\mathrm{Lip}}.$} $|(1+\lambda A)^{-1}|_{\mathrm{Lip}}\leq1.$ Moreover, for each $\lambda>0$
the so-called \emph{Yosida-approximation}
\[
A_{\lambda}\coloneqq\frac{1}{\lambda}(1-(1+\lambda A)^{-1}):H\to H
\]
is a monotone and Lipschitz-continuous mapping with $|A_{\lambda}|_{\mathrm{Lip}}\leq\frac{1}{\lambda}.$ 
\end{prop}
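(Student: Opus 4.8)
The plan is to reduce the whole statement to one inequality that comes from feeding the defining relation of $1+\lambda A$ into the monotonicity of $A$. First I would fix $\lambda>0$, pick $z_{1},z_{2}\in H$, and use \prettyref{thm:Minty} to obtain $u_{1},u_{2}\in H$ with $(u_{i},z_{i})\in 1+\lambda A$; that is, there are $v_{i}\in H$ with $(u_{i},v_{i})\in A$ and $u_{i}+\lambda v_{i}=z_{i}$. Writing $v_{i}=\frac{1}{\lambda}(z_{i}-u_{i})$, inserting this into $\Re\langle u_{1}-u_{2},v_{1}-v_{2}\rangle\geq 0$, and multiplying by $\lambda>0$ gives
\[
\Re\langle u_{1}-u_{2},z_{1}-z_{2}\rangle\geq|u_{1}-u_{2}|^{2}.
\]
By the Cauchy--Schwarz inequality this forces $|u_{1}-u_{2}|\leq|z_{1}-z_{2}|$. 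Taking $z_{1}=z_{2}$ shows that the element $u$ furnished by Minty's theorem is unique, so that, together with the solvability assertion of \prettyref{thm:Minty}, $(1+\lambda A)^{-1}$ is a genuine mapping defined on all of $H$; and the estimate just displayed is exactly $|(1+\lambda A)^{-1}|_{\mathrm{Lip}}\leq 1$.

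For the Yosida approximation I would keep the same notation, now with $u_{i}=(1+\lambda A)^{-1}z_{i}$, so that $A_{\lambda}z_{i}=\frac{1}{\lambda}(z_{i}-u_{i})=v_{i}$ and $(u_{i},v_{i})\in A$. Using $z_{1}-z_{2}=(u_{1}-u_{2})+\lambda(v_{1}-v_{2})$ I compute
\[
\Re\langle z_{1}-z_{2},A_{\lambda}z_{1}-A_{\lambda}z_{2}\rangle=\Re\langle u_{1}-u_{2},v_{1}-v_{2}\rangle+\lambda|v_{1}-v_{2}|^{2}\geq\lambda|v_{1}-v_{2}|^{2}\geq 0,
\]
which is the monotonicity of $A_{\lambda}$. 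The same chain of (in)equalities together with Cauchy--Schwarz gives $|z_{1}-z_{2}|\,|v_{1}-v_{2}|\geq\lambda|v_{1}-v_{2}|^{2}$, hence $|A_{\lambda}z_{1}-A_{\lambda}z_{2}|\leq\frac{1}{\lambda}|z_{1}-z_{2}|$, i.e. $|A_{\lambda}|_{\mathrm{Lip}}\leq\frac{1}{\lambda}$.

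There is no genuine obstacle here: the proposition is essentially bookkeeping around the single inequality above, and both halves of Minty's theorem (solvability, and via that inequality uniqueness) are precisely what is needed. The one point I would be careful about is the order of the argument --- single-valuedness of $(1+\lambda A)^{-1}$ must be established before one may legitimately speak of ``the mapping'' $(1+\lambda A)^{-1}$ or of $A_{\lambda}$ at all.
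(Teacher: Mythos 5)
Your proof is correct and follows exactly the route the paper indicates: the paper gives no proof of its own but cites \cite[Proposition 2.6]{Brezis} and remarks that the statement is ``a consequence of Minty's theorem and the definition of monotonicity,'' which is precisely your argument (surjectivity of $1+\lambda A$ from Minty, the contraction estimate and single-valuedness from monotonicity plus Cauchy--Schwarz, and the Yosida bounds from the decomposition $z_{1}-z_{2}=(u_{1}-u_{2})+\lambda(v_{1}-v_{2})$). Your closing remark about establishing single-valuedness before speaking of ``the mapping'' is the right point of care.
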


We conclude this section by some well-known perturbation results for
maximal monotone relations.
\begin{prop}
\label{prop:pert}Let $A\subseteq H\times H$ be maximal monotone
and $B:H\to H$ Lipschitz continuous. Moreover, assume that $A+B\coloneqq\left\{ (x,y+Bx)\,;\,(x,y)\in A\right\} $
is monotone. Then $A+B$ is maximal monotone.
\end{prop}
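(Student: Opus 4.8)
The plan is to verify the range condition from Minty's theorem. By \prettyref{thm:Minty}, and since $A+B$ is monotone by hypothesis, it suffices to exhibit a single $\lambda>0$ with $\ran(1+\lambda(A+B))=H$, i.e. to show that for every $z\in H$ there is $u\in H$ with $(u,z)\in 1+\lambda(A+B)$.

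First I would unravel this inclusion. By definition $(u,z)\in 1+\lambda(A+B)$ means $z\in u+\lambda Au+\lambda Bu$, equivalently $z-\lambda Bu\in u+\lambda Au$, and since $(1+\lambda A)^{-1}$ is a single-valued Lipschitz mapping defined on all of $H$ with $|(1+\lambda A)^{-1}|_{\mathrm{Lip}}\leq1$ (by the preceding proposition on the Yosida approximation), this is equivalent to the fixed point equation
\[
u=(1+\lambda A)^{-1}(z-\lambda Bu).
\]
So for fixed $z\in H$ I would consider $\Phi_{z}\colon H\to H$, $\Phi_{z}(u)\coloneqq(1+\lambda A)^{-1}(z-\lambda Bu)$. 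Composing the contraction-type estimate for $(1+\lambda A)^{-1}$ with the Lipschitz continuity of $B$ gives $|\Phi_{z}|_{\mathrm{Lip}}\leq\lambda|B|_{\mathrm{Lip}}$.

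Now choose $\lambda>0$ with $\lambda|B|_{\mathrm{Lip}}<1$ (any $\lambda>0$ works if $B$ is constant). Then $\Phi_{z}$ is a strict contraction on the Hilbert (hence Banach) space $H$, so the Banach fixed point theorem provides a unique $u\in H$ with $\Phi_{z}(u)=u$, i.e. $(u,z)\in 1+\lambda(A+B)$. As $z\in H$ was arbitrary, $\ran(1+\lambda(A+B))=H$ for this $\lambda$, and \prettyref{thm:Minty} yields that $A+B$ is maximal monotone. There is essentially no obstacle here; the only point needing care is to pick $\lambda$ small enough that $\Phi_{z}$ contracts, together with the observation that Minty's theorem only requires the range condition for \emph{one} such $\lambda$.
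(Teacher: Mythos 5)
Your argument is correct and coincides with the paper's own proof: both reduce the Minty range condition to the fixed point equation $u=(1+\lambda A)^{-1}(z-\lambda Bu)$ and apply the contraction mapping theorem for $\lambda|B|_{\mathrm{Lip}}<1$. The only cosmetic difference is that the paper disposes of the case $B=0$ separately before choosing $\lambda<\frac{1}{|B|_{\mathrm{Lip}}}$, while you note directly that any $\lambda$ works when $|B|_{\mathrm{Lip}}=0$.
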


\begin{proof}
For $B=0$ there is nothing to show. So assume that $B\ne0.$ By \prettyref{thm:Minty}
it suffices to prove that for $0<\lambda<\frac{1}{|B|_{\mathrm{Lip}}}$
and each $z\in H$ there exists $u\in H$ with $(u,z)\in1+\lambda(A+B).$
The latter is equivalent to 
\[
u=(1+\lambda A)^{-1}(z-\lambda Bu).
\]
Since $(1+\lambda A)^{-1}$ is Lipschitz-continuous with $|(1+\lambda A)^{-1}|_{\mathrm{Lip}}\leq1$
and $|\lambda B|_{\mathrm{Lip}}<1$ by the choice of $\lambda,$ the
assertion follows from the contraction mapping theorem.
\end{proof}
\begin{cor}
Let $B:H\to H$ be monotone and Lipschitz-continuous. Then $B$ is
maximal monotone.
\end{cor}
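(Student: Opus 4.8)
The plan is to read this off from \prettyref{prop:pert} by applying it to the simplest possible maximal monotone relation, namely the zero operator. Concretely, set $A\coloneqq H\times\{0\}=\{(x,0)\,;\,x\in H\}$. This $A$ is monotone, since $\Re\langle u-x,0-0\rangle=0\geq0$ for all $(u,0),(x,0)\in A$, and it is even maximal monotone: one computes $1+\lambda A=\{(x,x)\,;\,x\in H\}$, so for every $\lambda>0$ and every $z\in H$ the choice $u\coloneqq z$ gives $(u,z)\in1+\lambda A$, whence maximal monotonicity follows from \prettyref{thm:Minty}.

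With this choice one has $A+B=\{(x,0+Bx)\,;\,(x,0)\in A\}=\{(x,Bx)\,;\,x\in H\}$, which is exactly the graph of $B$, i.e. the relation $B$ itself. By hypothesis $B$ is monotone, so $A+B$ is monotone, and $B:H\to H$ is Lipschitz continuous. Hence all hypotheses of \prettyref{prop:pert} are satisfied and we conclude that $A+B=B$ is maximal monotone.

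There is no real obstacle here; the only point worth noticing is that the ``base'' maximal monotone relation fed into \prettyref{prop:pert} should be taken to be the zero operator rather than anything more involved. For completeness I would remark that one can also argue directly via Minty's theorem without invoking \prettyref{prop:pert}: for $0<\lambda<1/|B|_{\mathrm{Lip}}$ and $z\in H$ the map $u\mapsto z-\lambda Bu$ is a strict contraction on $H$, so by the contraction mapping theorem it has a (unique) fixed point $u$, which satisfies $u+\lambda Bu=z$, that is $(u,z)\in1+\lambda B$; \prettyref{thm:Minty}(iii) then yields maximal monotonicity of $B$. Either route is immediate, but the first is the most economical given the results already at hand.
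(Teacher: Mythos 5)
Your proposal is correct and matches the paper's intent exactly: the corollary is stated as an immediate consequence of Proposition \ref{prop:pert}, obtained by taking the base relation $A$ to be the zero operator, which is precisely what you do. Your verification that $H\times\{0\}$ is maximal monotone and that $A+B$ is then the graph of $B$ is accurate, and your alternative direct argument via Minty and the contraction mapping theorem is just the proof of Proposition \ref{prop:pert} specialised to this case.
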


In particular, if $B\subseteq H\times H$ is maximal monotone, then
its Yosida-approximation $B_{\lambda}$ is Lipschitz-continuous and
monotone and hence, maximal monotone. Moreover, if $A\subseteq H\times H$
is maximal monotone, then so is $A+B_{\lambda}$ for each $\lambda>0$
and hence, $(1+A+B_{\lambda})^{-1}$ is a Lipschitz-continuous mapping.
This observation can be used to prove the following perturbation result.
\begin{prop}[{\cite[Proposition 3.1]{papageogiou}}]
\label{prop:pert-yosida} Let $A,B\subseteq H\times H$ be two maximal
monotone relations. Moreover, let $z\in H.$ Then there exists $u\in H$
such that $(u,z)\in1+A+B$ (i.e., there are $v,w\in H$ with $(u,v)\in A,(u,w)\in B$
and $u+v+w=z$) if and only if 
\[
\sup_{\lambda>0}\left|B_{\lambda}(u_{\lambda})\right|<\infty
\]
where $u_{\lambda}\coloneqq(1+A+B_{\lambda})^{-1}(z).$ In the latter
case, $u_{\lambda}\to u$ as $\lambda\to0+.$
\end{prop}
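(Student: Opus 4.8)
The plan is to first observe that $u_{\lambda}$ is well defined: since $B_{\lambda}$ is monotone and Lipschitz continuous, $A+B_{\lambda}$ is maximal monotone (by \prettyref{prop:pert} and the observation preceding the statement), so $(1+A+B_{\lambda})^{-1}$ is a Lipschitz map on all of $H$. For each $\lambda>0$ I fix $v_{\lambda}\in H$ with $(u_{\lambda},v_{\lambda})\in A$ and $u_{\lambda}+v_{\lambda}+B_{\lambda}(u_{\lambda})=z$, and abbreviate $w_{\lambda}\coloneqq B_{\lambda}(u_{\lambda})$. Throughout I use the two defining identities of the Yosida approximation, namely $w_{\lambda}\in B\big((1+\lambda B)^{-1}u_{\lambda}\big)$ and $(1+\lambda B)^{-1}u_{\lambda}=u_{\lambda}-\lambda w_{\lambda}$.

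For the implication ``$\Rightarrow$'' I would pick $v,w\in H$ with $(u,v)\in A$, $(u,w)\in B$ and $u+v+w=z$, subtract this from the relation defining $u_{\lambda}$, and test the resulting identity $(u_{\lambda}-u)+(v_{\lambda}-v)+(w_{\lambda}-w)=0$ with $u_{\lambda}-u$. Monotonicity of $A$ eliminates the term $\Re\langle v_{\lambda}-v,u_{\lambda}-u\rangle\geq0$, and monotonicity of $B$ applied to the pairs $\big((1+\lambda B)^{-1}u_{\lambda},w_{\lambda}\big)$ and $(u,w)$ together with $(1+\lambda B)^{-1}u_{\lambda}-u=(u_{\lambda}-u)-\lambda w_{\lambda}$ gives $\Re\langle w_{\lambda}-w,u_{\lambda}-u\rangle\geq\lambda\Re\langle w_{\lambda}-w,w_{\lambda}\rangle$. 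Combining these and applying Young's inequality to $\lambda\Re\langle w,w_{\lambda}\rangle$ yields $|u_{\lambda}-u|^{2}+\tfrac{\lambda}{2}|w_{\lambda}|^{2}\leq\tfrac{\lambda}{2}|w|^{2}$, whence $\sup_{\lambda>0}|w_{\lambda}|\leq|w|<\infty$ and $|u_{\lambda}-u|\leq\sqrt{\lambda/2}\,|w|\to0$.

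For the implication ``$\Leftarrow$'' I would set $C\coloneqq\sup_{\lambda>0}|w_{\lambda}|<\infty$ and first show that $(u_{\lambda})_{\lambda>0}$ is Cauchy as $\lambda\to0+$: subtracting the defining relations for two parameters $\lambda,\mu>0$, testing with $u_{\lambda}-u_{\mu}$, using monotonicity of $A$ to drop the $A$-term, and using monotonicity of $B$ at $\big((1+\lambda B)^{-1}u_{\lambda},w_{\lambda}\big)$ and $\big((1+\mu B)^{-1}u_{\mu},w_{\mu}\big)$ gives $\Re\langle w_{\lambda}-w_{\mu},u_{\lambda}-u_{\mu}\rangle\geq\Re\langle w_{\lambda}-w_{\mu},\lambda w_{\lambda}-\mu w_{\mu}\rangle$, so that $|u_{\lambda}-u_{\mu}|^{2}\leq 2C^{2}(\lambda+\mu)$ and hence $u_{\lambda}\to u$ for some $u\in H$. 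Since $v_{\lambda}=z-u_{\lambda}-w_{\lambda}$ is then bounded, I would choose a sequence $\lambda_{n}\to0+$ along which $w_{\lambda_{n}}\rightharpoonup w$, so that $v_{\lambda_{n}}\rightharpoonup v\coloneqq z-u-w$; because $(1+\lambda_{n}B)^{-1}u_{\lambda_{n}}=u_{\lambda_{n}}-\lambda_{n}w_{\lambda_{n}}\to u$ strongly, the closedness of the maximal monotone relations $B$ and $A$ with respect to strong convergence in the first and weak convergence in the second component (a standard consequence of \prettyref{thm:Minty}) yields $(u,w)\in B$ and $(u,v)\in A$; with $u+v+w=z$ this gives $(u,z)\in1+A+B$, and since $u=\lim_{\lambda\to0+}u_{\lambda}$ this is the asserted convergence. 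One may also note that such a $u$ is unique (test the difference of two solutions of $(\,\cdot\,,z)\in1+A+B$ with itself), so there is no ambiguity in the statement.

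The step I expect to be the main obstacle is the ``$\Leftarrow$'' direction: the Cauchy estimate hinges on the exact cancellation produced by the Yosida identity $(1+\lambda B)^{-1}u_{\lambda}=u_{\lambda}-\lambda w_{\lambda}$, and the subsequent identification of the limit must be carried out with only \emph{weak} convergence of $(w_{\lambda})$ and $(v_{\lambda})$ available, so it relies on demiclosedness of maximal monotone relations rather than on any strong compactness. The ``$\Rightarrow$'' direction is comparatively routine, the only care being the bookkeeping in the monotonicity inequality for $B$ evaluated at the resolvent point $(1+\lambda B)^{-1}u_{\lambda}$ rather than at $u_{\lambda}$.
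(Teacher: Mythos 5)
The paper does not prove this proposition at all: it is quoted verbatim from Hu--Papageorgiou \cite{papageogiou} (Proposition 3.1 there), so there is no in-paper argument to compare against. Judged on its own, your proof is correct and complete, and it is essentially the classical argument from the cited source (and from Br\'ezis's monograph): the two Yosida identities $(1+\lambda B)^{-1}u_{\lambda}=u_{\lambda}-\lambda B_{\lambda}(u_{\lambda})$ and $\bigl((1+\lambda B)^{-1}u_{\lambda},B_{\lambda}(u_{\lambda})\bigr)\in B$ are exactly the right tools, the monotonicity bookkeeping in both directions checks out (in particular the estimate $|u_{\lambda}-u|^{2}+\tfrac{\lambda}{2}|w_{\lambda}|^{2}\leq\tfrac{\lambda}{2}|w|^{2}$ and the Cauchy bound $|u_{\lambda}-u_{\mu}|^{2}\leq2C^{2}(\lambda+\mu)$ are both valid), and the identification of the limit via demiclosedness of maximal monotone relations under strong--weak convergence is the standard closing step. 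One cosmetic remark: demiclosedness is a direct consequence of maximality itself (pass to the limit in $\Re\langle x_{n}-p,y_{n}-q\rangle\geq0$ and invoke maximality), rather than of Minty's theorem as you suggest; this does not affect the correctness of the argument.
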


With the help of the latter proposition, one can prove the following
perturbation result.
\begin{cor}[{\cite[Proposition 1.22]{Trostorff_2011}}]
\label{cor:bounded_pert} Let $A,B\subseteq H\times H$ be two maximal
monotone relations. Moreover, assume that $A$ is \emph{bounded},
i.e. for each $U\subseteq H$ bounded, the set 
\[
\left\{ v\in H\,;\,\exists u\in U:(u,v)\in A\right\} 
\]
is bounded. If $\dom(A)\cap\dom(B)\ne\emptyset$ (i.e. $\exists u,v,w\in H:\,(u,v)\in A$
and $(u,w)\in B$) then $A+B$ is maximal monotone. 
\end{cor}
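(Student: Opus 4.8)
The plan is to verify Minty's criterion (\prettyref{thm:Minty}) for $A+B$, using the boundedness of $A$ to run \prettyref{prop:pert-yosida}. Since $A$ and $B$ are monotone, so is $A+B$ (pair up equal first components), and since $B+A=A+B$ as relations, by \prettyref{thm:Minty}(iii) it suffices to show that for every $z\in H$ there is $u\in H$ with $(u,z)\in1+B+A$. I would apply \prettyref{prop:pert-yosida} with the \emph{second} relation being the bounded one, $A$: setting $u_{\lambda}\coloneqq(1+B+A_{\lambda})^{-1}(z)$ (well-defined and Lipschitz, since $B$ is maximal monotone and $A_{\lambda}$ is monotone and Lipschitz, so $B+A_{\lambda}$ is maximal monotone), the proposition reduces the claim to proving $\sup_{\lambda>0}|A_{\lambda}(u_{\lambda})|<\infty$, in which case $u_{\lambda}\to u$ with $(u,z)\in1+B+A$.

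Here is where boundedness of $A$ enters: writing $J_{\lambda}\coloneqq(1+\lambda A)^{-1}$, one always has $(J_{\lambda}(u_{\lambda}),A_{\lambda}(u_{\lambda}))\in A$, so if $\{J_{\lambda}(u_{\lambda})\,;\,\lambda\in\,]0,1]\}$ is bounded, then boundedness of $A$ forces $\{A_{\lambda}(u_{\lambda})\,;\,\lambda\in\,]0,1]\}$ to be bounded. Moreover $\lambda\mapsto|A_{\lambda}(u_{\lambda})|$ is non-increasing — this follows by subtracting the defining relations for $u_{\lambda}$ and $u_{\mu}$ ($\lambda\geq\mu$), testing with $u_{\lambda}-u_{\mu}$, discarding the nonnegative contributions coming from monotonicity of $A$ and $B$, and using Cauchy--Schwarz — so the supremum over $]0,1]$ coincides with the one over $]0,\infty[$. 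Because $J_{\lambda}(u_{\lambda})=u_{\lambda}-\lambda A_{\lambda}(u_{\lambda})$, it thus suffices to bound $|u_{\lambda}|$ and $\lambda|A_{\lambda}(u_{\lambda})|$ uniformly for $\lambda\in\,]0,1]$.

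For these a priori bounds I would fix $p\in\dom(A)\cap\dom(B)$ and $q,r\in H$ with $(p,q)\in A$, $(p,r)\in B$ (they exist by hypothesis), and pick $b_{\lambda}\in H$ with $(u_{\lambda},b_{\lambda})\in B$ and $u_{\lambda}+b_{\lambda}+A_{\lambda}(u_{\lambda})=z$. Testing this identity against $u_{\lambda}-p$ and invoking monotonicity of $A$ at the pairs $(J_{\lambda}(u_{\lambda}),A_{\lambda}(u_{\lambda})),(p,q)$ and of $B$ at $(u_{\lambda},b_{\lambda}),(p,r)$ — using $J_{\lambda}(u_{\lambda})-p=(u_{\lambda}-p)-\lambda A_{\lambda}(u_{\lambda})$ and $A_{\lambda}(u_{\lambda})+b_{\lambda}=z-u_{\lambda}$ — and then applying Cauchy--Schwarz together with Young's inequality to the term $-\lambda\Re\langle A_{\lambda}(u_{\lambda}),q\rangle$, one arrives at an estimate of the form
\[
\tfrac{\lambda}{2}|A_{\lambda}(u_{\lambda})|^{2}+|u_{\lambda}|^{2}\leq c_{1}|u_{\lambda}|+c_{2}+\tfrac{\lambda}{2}|q|^{2},\qquad c_{1}\coloneqq|z-q-r|+|p|,\ c_{2}\coloneqq|p|\,|z-q-r|.
\]
For $\lambda\in\,]0,1]$ the quadratic inequality $|u_{\lambda}|^{2}\leq c_{1}|u_{\lambda}|+c_{2}+\tfrac12|q|^{2}$ yields a uniform bound $|u_{\lambda}|\leq R$, and reinserting this gives $\lambda|A_{\lambda}(u_{\lambda})|^{2}\leq 2(c_{1}R+c_{2})+|q|^{2}$, hence $\lambda|A_{\lambda}(u_{\lambda})|=\sqrt{\lambda}\cdot\sqrt{\lambda|A_{\lambda}(u_{\lambda})|^{2}}$ is bounded on $]0,1]$ as well. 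Then $\{J_{\lambda}(u_{\lambda})\}$ is bounded, $\sup_{\lambda>0}|A_{\lambda}(u_{\lambda})|<\infty$, and \prettyref{prop:pert-yosida} followed by \prettyref{thm:Minty} finishes the proof.

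The main obstacle is the a priori estimate: from a single monotonicity inequality one must extract two qualitatively different uniform bounds, one on $|u_{\lambda}|$ and one on $\lambda|A_{\lambda}(u_{\lambda})|$, while accepting that $|A_{\lambda}(u_{\lambda})|$ itself may grow like $\lambda^{-1/2}$ as $\lambda\to0+$ (which is unavoidable without boundedness of $A$); the extra factor $\lambda$ in $J_{\lambda}(u_{\lambda})=u_{\lambda}-\lambda A_{\lambda}(u_{\lambda})$ is exactly what keeps $\{J_{\lambda}(u_{\lambda})\}$ bounded and lets the hypothesis ``$A$ bounded'' take effect. A secondary technical point, used to upgrade the bound on $]0,1]$ to one on $]0,\infty[$, is the monotonicity of $\lambda\mapsto|A_{\lambda}(u_{\lambda})|$.
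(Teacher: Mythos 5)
Your proof is correct and follows exactly the route the paper intends for this corollary: Yosida-approximate the \emph{bounded} relation $A$, apply \prettyref{prop:pert-yosida} to $u_{\lambda}=(1+B+A_{\lambda})^{-1}(z)$, and convert a uniform bound on $J_{\lambda}(u_{\lambda})=u_{\lambda}-\lambda A_{\lambda}(u_{\lambda})$ into the required bound on $A_{\lambda}(u_{\lambda})$ via the boundedness of $A$ and the inclusion $(J_{\lambda}(u_{\lambda}),A_{\lambda}(u_{\lambda}))\in A$. The a priori estimate obtained by testing against $u_{\lambda}-p$ for $p\in\dom(A)\cap\dom(B)$, and the monotonicity of $\lambda\mapsto|A_{\lambda}(u_{\lambda})|$ used to pass from $]0,1]$ to $]0,\infty[$, both check out.
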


\section{The main result}

Throughout, let $\rho>0.$ We begin with stating the hypotheses of
the operators and relations involved, which we assume to be valid
throughout this section. 

\begin{hyp}\label{hyp:assumptions} Let $\mathcal{M},\mathcal{N},\mathcal{M}'\in L(L_{2,\rho}(\R;H))$
be such that:

\begin{enumerate}[(a)]

\item $\mathcal{M}\partial_{t,\rho}\subseteq\partial_{t,\rho}\mathcal{M}-\mathcal{M}',$

\item There exists $c>0$ such that 
\[
\Re\langle(\partial_{t,\rho}\mathcal{M}+\mathcal{N})\varphi,\varphi\rangle_{\rho}\geq c|\varphi|_{\rho}^{2}
\]
for each $\varphi\in C_{c}^{\infty}(\R;H).$

\end{enumerate}

Moreover, let $\mathcal{A}\subseteq L_{2,\rho}(\R;H)\times L_{2,\rho}(\R;H)$
be maximal monotone such that 
\[
\forall u,v\in L_{2,\rho}(\R;H):\,\left((u,v)\in\mathcal{A}\Rightarrow\forall h\geq0:(\tau_{h}u,\tau_{h}v)\in\mathcal{A}\right).
\]

\end{hyp}

It is noteworthy that the perspective on differential inclusions presented
here, is that the relation $\mathcal{A}$ is independent of time in
the sense that it commutes with time translation, while the time-dependence
enter the problem via coefficients, which are incorporated in the
operators $\mathcal{M}$ and $\mathcal{N}.$ This is the standard
case in problems arising in mathematical physics.

Our main theorem reads as follows.
\begin{thm}[Well-posedness]
\label{thm:main} The relation $\overline{\partial_{t,\rho}\mathcal{M}+\mathcal{N}+\mathcal{A}}$
is $c$-maximal monotone. In particular, the inverse relation 
\[
\mathcal{S}_{\rho}\coloneqq\left(\overline{\partial_{t,\rho}\mathcal{M}+\mathcal{N}+\mathcal{A}}\right)^{-1}:L_{2,\rho}(\R;H)\to L_{2,\rho}(\R;H)
\]
is a Lipschitz-continuous mapping with $\left|\mathcal{S}_{\rho}\right|_{\mathrm{Lip}}\leq\frac{1}{c}$.
Hence, for each $f\in L_{2,\rho}(\R;H)$ there exists a unique $u\in L_{2,\rho}(\R;H)$
with 
\[
(u,f)\in\overline{\partial_{t,\rho}\mathcal{M}+\mathcal{N}+\mathcal{A}}
\]
and $u$ depends Lipschitz-continuously on $f$.
\end{thm}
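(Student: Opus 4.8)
The plan is to reduce the statement to a single application of Minty's theorem (\prettyref{thm:Minty}) together with the perturbation results collected in the preliminaries, and to exploit the translation-invariance of $\mathcal{A}$ to control the derivative term. First I would record that the relation $\mathcal{B}\coloneqq\partial_{t,\rho}\mathcal{M}+\mathcal{N}+\mathcal{A}$, viewed on its natural domain $\{u\in\dom(\partial_{t,\rho}\mathcal{M})\cap\dom(\mathcal{A})\}$, is $c$-monotone: for $(u,f),(u',f')\in\mathcal{B}$ one splits the difference into the $\mathcal{A}$-part, which contributes a nonnegative real part by monotonicity of $\mathcal{A}$, and the $(\partial_{t,\rho}\mathcal{M}+\mathcal{N})$-part, which is handled by Hypotheses \ref{hyp:assumptions}(b) after a density argument — here one uses that $C_c^\infty(\R;H)$ is a core and that $\partial_{t,\rho}\mathcal{M}+\mathcal{N}$ is bounded from $H^1_\rho$ to $L_{2,\rho}$ via the commutator relation (a), so the estimate extends from test functions to all of $H^1_\rho(\R;H)$. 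Monotonicity of the closure $\overline{\mathcal{B}}-c$ then follows automatically, so the only real content is \emph{maximality}, i.e. surjectivity of $1+\frac1c(\overline{\mathcal{B}}-c)=\frac1c(\partial_{t,\rho}\mathcal{M}+\mathcal{N}+\mathcal{A})+(1-1)\dots$; more cleanly, by \prettyref{thm:Minty} it suffices to show that $\ran(\lambda+\partial_{t,\rho}\mathcal{M}+\mathcal{N}+\mathcal{A})$ is dense in $L_{2,\rho}(\R;H)$ for one suitable $\lambda$, since density of the range of $\overline{\mathcal{B}}+\lambda$ plus the a priori estimate from $c$-monotonicity forces the range to be closed, hence all of $L_{2,\rho}$.

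The heart of the argument is therefore: given $f\in L_{2,\rho}(\R;H)$, produce $u$ (at least approximately) solving $(u,f)\in \lambda u+\partial_{t,\rho}\mathcal{M} u+\mathcal{N}u+\mathcal{A}u$. I would do this by regularising $\mathcal{A}$ with its Yosida approximation $\mathcal{A}_n$, which by the Corollary after \prettyref{prop:pert} is Lipschitz and maximal monotone. For fixed $n$ the operator $\partial_{t,\rho}\mathcal{M}+\mathcal{N}+\mathcal{A}_n$ is a bounded Lipschitz perturbation of $\partial_{t,\rho}\mathcal{M}$; one solves $(u_n,f)\in c\cdot 1+\partial_{t,\rho}\mathcal{M}+\mathcal{N}+\mathcal{A}_n$ — equivalently a fixed-point equation $u_n=(c+\partial_{t,\rho}\mathcal{M})^{-1}(f-\mathcal{N}u_n-\mathcal{A}_n u_n+\text{l.o.t.})$, where boundedly invertibility of $c+\partial_{t,\rho}\mathcal{M}$ (or rather of the full linear part $\partial_{t,\rho}\mathcal{M}+\mathcal{N}$, which is coercive by (b)) is obtained from the estimate in (b) via the Lax–Milgram-type argument already implicit in \prettyref{exa:Time_derivative}. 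The $c$-monotonicity estimate gives $|u_n|_\rho\le\frac1c|f|_\rho$ uniformly in $n$, and testing the equation against $u_n$ also bounds $\langle\mathcal{A}_n u_n,u_n\rangle_\rho$, hence — by the standard Yosida estimate $|\mathcal{A}_n u_n|_\rho^2 \le \langle \mathcal{A}_n u_n, u_n\rangle_\rho \cdot(\dots)$ combined with monotonicity and $(0,0)\in\mathcal{A}$ from \prettyref{lem:lifting} — bounds $\sup_n|\mathcal{A}_n u_n|_\rho<\infty$. This is exactly the hypothesis of \prettyref{prop:pert-yosida} (applied with $A$ the coercive linear part $\frac1c(\partial_{t,\rho}\mathcal{M}+\mathcal{N})$, which one checks is maximal monotone, and $B=\frac1c\mathcal{A}$), yielding convergence $u_n\to u$ with $(u,f)\in\overline{\mathcal{B}}$ after rescaling.

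The translation-invariance assumption on $\mathcal{A}$ enters to guarantee that the lifted relation still respects the structure needed for the Yosida machinery — specifically that $\mathcal{A}_n$ commutes with $\tau_h$, which one uses to show that $\partial_{t,\rho}\mathcal{A}_n$-type commutators do not spoil the domain bookkeeping, and it is what ultimately makes the limiting relation a well-defined inclusion on all of $L_{2,\rho}$ rather than a smaller space. The main obstacle I anticipate is verifying surjectivity of the \emph{regularised} problem $\partial_{t,\rho}\mathcal{M}+\mathcal{N}+\mathcal{A}_n$ — i.e. that the coercive but non-symmetric, operator-valued, non-autonomous linear part $\partial_{t,\rho}\mathcal{M}+\mathcal{N}$ is boundedly invertible on $L_{2,\rho}$; naively this is not a self-adjoint situation, so one cannot just quote Lax–Milgram on a Hilbert space, and the commutator relation (a) must be used carefully to control $\partial_{t,\rho}\mathcal{M}$ on $H^1_\rho$ and to pass the coercivity estimate (b) from $C_c^\infty$ to the full domain. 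Once bounded invertibility of the linear part is secured, the perturbation by the Lipschitz monotone map $\mathcal{A}_n$ is routine via \prettyref{prop:pert}, and the passage $n\to\infty$ is exactly \prettyref{prop:pert-yosida}; the Lipschitz bound $\frac1c$ on $\mathcal{S}_\rho$ and uniqueness are then immediate from $c$-monotonicity of $\overline{\mathcal{B}}$.
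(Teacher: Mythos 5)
Your overall architecture (monotonicity from \prettyref{hyp:assumptions} (b) via a core/density argument, Yosida regularisation of $\mathcal{A}$, solvability of the regularised problem by Lipschitz perturbation, passage to the limit via \prettyref{prop:pert-yosida}) matches the paper's strategy in outline, and the first and last steps are sound. The genuine gap is in the middle: the claimed uniform bound $\sup_{\lambda>0}|\mathcal{A}_{\lambda}(u_{\lambda})|_{\rho}<\infty$ does not follow from testing the regularised equation against $u_{\lambda}$. That test only bounds $\Re\langle\mathcal{A}_{\lambda}(u_{\lambda}),u_{\lambda}\rangle_{\rho}$, and the ``standard Yosida estimate'' you invoke is $|\mathcal{A}_{\lambda}(u)|_{\rho}^{2}\leq\frac{1}{\lambda}\Re\langle\mathcal{A}_{\lambda}(u),u\rangle_{\rho}$ (from $(0,0)\in\mathcal{A}$ and $u=(1+\lambda\mathcal{A})^{-1}u+\lambda\mathcal{A}_{\lambda}(u)$): the factor $\frac{1}{\lambda}$ has exactly the wrong dependence and the bound degenerates as $\lambda\to0+$. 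The alternative uniform estimate $|\mathcal{A}_{\lambda}(u)|_{\rho}\leq\inf\{|v|_{\rho}\,;\,(u,v)\in\mathcal{A}\}$ presupposes $u_{\lambda}\in\dom(\mathcal{A})$, which is what one is trying to prove. So the hypothesis of \prettyref{prop:pert-yosida} is not verified by your argument, and this is precisely the hard core of the theorem.

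The paper closes this gap by an $H_{\rho}^{1}$-regularity argument that your plan does not contain. One first passes to the auxiliary problem with $\mathcal{N}$ replaced by $-\mathcal{M}'+\delta$, because $\partial_{t,\rho}\mathcal{M}-\mathcal{M}'$ is exactly the combination that intertwines with $\partial_{t,\rho}$ by \prettyref{hyp:assumptions} (a); then, for $f\in H_{\rho}^{1}(\R;H)$, one shows that $u_{\lambda}=(\partial_{t,\rho}\mathcal{M}-\mathcal{M}'+\delta+\mathcal{A}_{\lambda})^{-1}(f)$ lies in $H_{\rho}^{1}(\R;H)$ with $|u_{\lambda}|_{\rho,1}\leq\frac{1}{c}|f|_{\rho,1}$ uniformly in $\lambda$ (\prettyref{thm:regular_solution}), by solving the formally differentiated equation $(\mathcal{T}_{\delta}+\mathcal{B}_{\lambda})v=\partial_{t,\rho}f$ with $\mathcal{B}_{\lambda}=\partial_{t,\rho}\mathcal{A}_{\lambda}\partial_{t,\rho}^{-1}$. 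This is where the translation invariance of $\mathcal{A}$ is really used: it makes $\mathcal{A}_{\lambda}$ commute with $\tau_{h}$, hence preserve $H_{\rho}^{1}(\R;H)$ and render $\mathcal{B}_{\lambda}$ monotone and $\frac{1}{\lambda}$-Lipschitz on $L_{2,\rho}(\R;H)$ (\prettyref{prop:properties_B}) --- not merely to ``keep domain bookkeeping''. With the uniform $H_{\rho}^{1}$-bound in hand one reads off $\mathcal{A}_{\lambda}(u_{\lambda})=f-\mathcal{M}\partial_{t,\rho}u_{\lambda}-\delta u_{\lambda}$, which is bounded uniformly in $\lambda$; \prettyref{prop:pert-yosida} then yields solvability on the dense set $H_{\rho}^{1}(\R;H)$, and the original problem is recovered from the auxiliary one by the bounded Lipschitz perturbation $\mathcal{M}'+\mathcal{N}-\delta$ via \prettyref{prop:pert}. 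Without a substitute for this regularity step your proof does not close.
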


Before we come to the proof of this theorem, we illustrate the hypotheses
by some concrete examples for the operators $\mathcal{M}$ and $\mathcal{N}$.
\begin{example}
\begin{enumerate}[(a)]

\item If $\mathcal{N}=0$ and $\mathcal{M}$ is given in terms of
an operator-valued analytic function of $\partial_{t,\rho}$, then
condition (a) is trivially satisfied with $\mathcal{M}'=0$ and the
resulting problem becomes autonomous. This class of operators was
introduced in \cite{Picard} and allows the treatment of a broad class
of differential equations arising in mathematical physics (see e.g.
\cite{Picard,Picard2010_poroelastic,Picard2012_Impedance,Picard2015_micropoloar})
and includes different types of equations, such as fractional differential
equations (see \cite{Picard2013_fractional}), delay equations with
finite and infinite delay (see \cite{Kalauch2011}), and integro-differential
equations (see \cite{Trostorff2012_integro}).

\item A second class of problems covered by the hypotheses is given
by non-autonomous equations, which are local with respect to time.
More precisely, $\mathcal{M},\mathcal{N}$ are given as operator-valued
multiplication operators, such that $\mathcal{M}$ is Lipschitz-continuous.
Then, Rademacher's theorem implies that (a) holds. Such equations
were studied in \cite{Picard2013_nonauto} and generalised to inclusions
in \cite{Trostorff2013_nonautoincl}.

\item The hypotheses also allow the treatment of non-autonomous differential
inclusions which are non-local in space and time. A classical example
would be integral operators of the form
\[
\left(\mathcal{M}u\right)(t)\coloneqq\int_{\mathbb{R}}k(t,s)u(s)\d s\quad(t\in\mathbb{R})
\]
with a suitable (possibly operator-valued) kernel $k$. Indeed, if
$k$ is differentiable with respect to the first variable and satisfies
suitable integrability conditions, one easily can show that (a) holds.
To find the right conditions on $k$ to ensure (b) is more delicate
and will be postponed to future research. In case of a kernel $k(t,s)=k(t-s)$,
this was done in \cite{Trostorff2012_integro,Trostorff_habil} even
for operator-valued kernels.

\end{enumerate}
\end{example}

We begin by proving \prettyref{thm:main} in the case $\mathcal{A}=0.$
The proof follows the rationale of \cite[Section 3.3]{Waurick_habil}.
For the readers convenience we recall the definition of a core of
a closed operator.
\begin{defn*}
Let $S:\dom(S)\subseteq X\to Y$ be a closed linear operator between
two normed spaces $X$ and $Y$. A linear subspace $D\subseteq X$
is called a \emph{core for $S$}, if $S=\overline{S|_{D}}.$
\end{defn*}
\begin{prop}
\label{prop:max_mon_materiallaw} $C_{c}^{\infty}(\R;H)$ is a core
for $\partial_{t,\rho}\mathcal{M}$. Moreover, the operator $\partial_{t,\rho}\mathcal{M}+\mathcal{N}$
is $c$-maximal monotone.
\end{prop}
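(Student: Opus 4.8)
The plan is to establish the two assertions in turn, using Hypotheses \ref{hyp:assumptions} together with the elementary facts about $\partial_{t,\rho}$ collected in the preliminaries. For the core statement, I would argue as follows. Let $u\in\dom(\partial_{t,\rho}\mathcal{M})$, i.e.\ $\mathcal{M}u\in\dom(\partial_{t,\rho})=H_\rho^1(\R;H)$. Since $C_c^\infty(\R;H)$ is dense in $L_{2,\rho}(\R;H)$, pick $\varphi_n\in C_c^\infty(\R;H)$ with $\varphi_n\to u$ in $L_{2,\rho}(\R;H)$; then $\mathcal{M}\varphi_n\to\mathcal{M}u$ by boundedness of $\mathcal{M}$. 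The trouble is that $\mathcal{M}\varphi_n$ need not be smooth, so I cannot directly read off convergence of $\partial_{t,\rho}\mathcal{M}\varphi_n$. The standard remedy (this is the ``rationale of \cite[Section 3.3]{Waurick_habil}'' alluded to) is to mollify in time: let $(\psi_\varepsilon)_{\varepsilon>0}$ be a Friedrichs mollifier and set $u_\varepsilon:=\psi_\varepsilon\ast u$, which lies in $\dom(\partial_{t,\rho}^k)$ for all $k$, converges to $u$ in $H_\rho^1$ whenever $u\in H_\rho^1$, and—crucially—commutes with $\partial_{t,\rho}$. One then shows $\psi_\varepsilon\ast(\mathcal{M}u)=\mathcal{M}(\psi_\varepsilon\ast u)+r_\varepsilon$ where the commutator remainder $r_\varepsilon\to0$ in $L_{2,\rho}$; here Hypotheses \ref{hyp:assumptions}(a), $\mathcal{M}\partial_{t,\rho}\subseteq\partial_{t,\rho}\mathcal{M}-\mathcal{M}'$, is exactly what controls the interaction of $\mathcal{M}$ with the smoothing. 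Concretely, with $u_\varepsilon:=\psi_\varepsilon\ast u$ one gets $\mathcal{M}u_\varepsilon\to\mathcal{M}u$ and
\[
\partial_{t,\rho}\mathcal{M}u_\varepsilon=\partial_{t,\rho}\psi_\varepsilon\ast(\mathcal{M}u)-[\text{commutator}]\longrightarrow\partial_{t,\rho}\mathcal{M}u,
\]
using $\psi_\varepsilon\ast(\mathcal{M}u)\to\mathcal{M}u$ in $H_\rho^1$ (legitimate since $\mathcal{M}u\in H_\rho^1$) and the commutator estimate from (a). Finally each $u_\varepsilon$ can be cut off by a smooth compactly supported function and is then approximated by genuine $C_c^\infty$ functions without disturbing the $\dom(\partial_{t,\rho}\mathcal{M})$-convergence, which yields that $C_c^\infty(\R;H)$ is a core.

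For the second assertion I would first note monotonicity of $\partial_{t,\rho}\mathcal{M}+\mathcal{N}-c$ on $C_c^\infty(\R;H)$ from Hypotheses \ref{hyp:assumptions}(b), and then extend it to the closure $\overline{(\partial_{t,\rho}\mathcal{M}+\mathcal{N})|_{C_c^\infty}}=\partial_{t,\rho}\mathcal{M}+\mathcal{N}$ by continuity (the core property just established, plus boundedness of $\mathcal{N}$); since monotonicity is preserved under strong graph limits, $\partial_{t,\rho}\mathcal{M}+\mathcal{N}$ is $c$-monotone. For $c$-maximality, by Minty's theorem (\prettyref{thm:Minty}, applied to the relation $(\partial_{t,\rho}\mathcal{M}+\mathcal{N})-c$) it suffices to exhibit, for every $f\in L_{2,\rho}(\R;H)$, a solution $u\in\dom(\partial_{t,\rho}\mathcal{M})$ of $(\partial_{t,\rho}\mathcal{M}+\mathcal{N})u=f$, equivalently of $\partial_{t,\rho}\mathcal{M}u+\mathcal{N}u=f$. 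The $c$-monotonicity gives a priori the bound $|u|_\rho\leq\tfrac1c|f|_\rho$, so the map, once shown onto, is automatically Lipschitz with constant $1/c$ and in particular injective; hence surjectivity is the whole game.

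To prove surjectivity I would use a Galerkin/approximation scheme in the exponential weight parameter, or more directly a continuity (homotopy) argument: consider the family $\partial_{t,\rho}\mathcal{M}_s+\mathcal{N}_s$ interpolating between the constant-coefficient operator $\partial_{t,\rho}$ (which is boundedly invertible, \prettyref{exa:Time_derivative}) and $\partial_{t,\rho}\mathcal{M}+\mathcal{N}$, keeping the positivity constant $c$ uniform along the path; the method-of-continuity for $m$-accretive operators then propagates surjectivity. Alternatively, and I think more in the spirit of the cited reference, one writes the equation as a fixed point: $\mathcal{M}u\in H_\rho^1$ and $\partial_{t,\rho}\mathcal{M}u=f-\mathcal{N}u$, i.e.\ $\mathcal{M}u=\partial_{t,\rho}^{-1}(f-\mathcal{N}u)$, so $u=\mathcal{M}^{-1}\partial_{t,\rho}^{-1}(f-\mathcal{N}u)$ provided $\mathcal{M}$ is invertible; but $\mathcal{M}$ need not be invertible under the hypotheses, so one instead works with the operator $\partial_{t,\rho}\mathcal{M}+\mathcal{N}$ as a whole and invokes its accretivity: define $u_\lambda:=(1+\lambda^{-1}((\partial_{t,\rho}\mathcal{M}+\mathcal{N})-c))^{-1}$ of a regularised datum, show these stay bounded using (b), and pass to the limit. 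The main obstacle is precisely this surjectivity step: it requires turning the coercivity estimate \ref{hyp:assumptions}(b), which lives only on the smooth core, into an existence statement on all of $L_{2,\rho}(\R;H)$ for the non-local operator $\mathcal{M}$, and the commutator hypothesis (a) is the essential tool that lets one transfer estimates between $u$ and $\mathcal{M}u$ through $\partial_{t,\rho}$; once existence is in hand, $c$-maximal monotonicity and the Lipschitz bound $1/c$ follow formally from Minty's theorem and the coercivity estimate.
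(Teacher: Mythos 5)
Your first half is broadly in the spirit of the paper's argument, but with a technical substitution that creates an unproven claim. The paper smooths with the resolvent $(1+\varepsilon\partial_{t,\rho})^{-1}$ rather than a convolution mollifier, precisely because the commutator of $\mathcal{M}$ with the resolvent can be computed \emph{exactly} from \prettyref{hyp:assumptions}~(a): one gets $\partial_{t,\rho}\mathcal{M}(1+\varepsilon\partial_{t,\rho})^{-1}u=(1+\varepsilon\partial_{t,\rho})^{-1}\partial_{t,\rho}\mathcal{M}u+\varepsilon\partial_{t,\rho}(1+\varepsilon\partial_{t,\rho})^{-1}\mathcal{M}'(1+\varepsilon\partial_{t,\rho})^{-1}u$, an algebraic identity whose second term visibly tends to $0$. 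Your claim that $\psi_\varepsilon\ast(\mathcal{M}u)=\mathcal{M}(\psi_\varepsilon\ast u)+r_\varepsilon$ with $r_\varepsilon\to0$ is plausible but not a consequence of (a) without further work: (a) controls $[\mathcal{M},\partial_{t,\rho}]$, not $[\mathcal{M},\psi_\varepsilon\ast\cdot]$ or $[\mathcal{M},\chi]$ for a cutoff $\chi$ ($\mathcal{M}$ is a general non-local space--time operator, so neither commutator is obviously small). The reduction from $\dom(\partial_{t,\rho})$ to $C_c^\infty(\R;H)$ is easier than you make it: for $u\in H_\rho^1(\R;H)$ one has the identity $\partial_{t,\rho}\mathcal{M}u=\mathcal{M}\partial_{t,\rho}u+\mathcal{M}'u$, which is continuous in the $H_\rho^1$-topology, so approximating $u$ in $H_\rho^1$ by test functions suffices; no cutoff argument is needed.

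The second half contains the genuine gap, and you identify it yourself: surjectivity of $\partial_{t,\rho}\mathcal{M}+\mathcal{N}$ is ``the whole game'' and none of your three proposed routes is carried out. The fixed-point route fails because $\mathcal{M}$ need not be invertible (you concede this). The method of continuity is delicate here because the domains $\dom(\partial_{t,\rho}(s\mathcal{M}+(1-s)))$ vary with $s$, so the interpolating operators are not perturbations of one another in any obvious relatively-bounded sense. The Yosida-type regularisation you sketch is not well defined as written. The paper closes this gap with a duality argument that is absent from your proposal: using the adjoint of hypothesis (a), namely $\mathcal{M}^\ast\partial_{t,\rho}^\ast\subseteq\partial_{t,\rho}^\ast\mathcal{M}^\ast+(\mathcal{M}')^\ast$, the same resolvent computation shows that $C_c^\infty(\R;H)$ is also a core for $\left(\partial_{t,\rho}\mathcal{M}+\mathcal{N}\right)^\ast$. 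Hypothesis (b) then gives $\Re\langle(\partial_{t,\rho}\mathcal{M}+\mathcal{N})^\ast\varphi,\varphi\rangle_\rho\geq c|\varphi|_\rho^2$ on that core, so the adjoint is bounded below and in particular injective; hence $\partial_{t,\rho}\mathcal{M}+\mathcal{N}$ has dense range, and since it is closed with a bounded inverse (by the same coercivity), its range is closed, so it is onto. Without this adjoint-core step, or a worked-out substitute, the maximality claim is not established.
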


\begin{proof}
First, we observe that $(1+\varepsilon\partial_{t,\rho})^{-1}\to1$
strongly as $\varepsilon\to0+.$ Indeed, since $\partial_{t,\rho}$
is maximal monotone by \prettyref{exa:Time_derivative} we have $\|(1+\varepsilon\partial_{t,\rho})^{-1}\|\leq1$
for each $\varepsilon>0.$ Thus, it suffices to prove the strong convergence
on a dense subset of $L_{2,\rho}(\R;H).$ Since for $u\in\dom(\partial_{t,\rho})$
we have that 
\[
(1+\varepsilon\partial_{t,\rho})^{-1}u-u=-\varepsilon(1+\varepsilon\partial_{t,\rho})^{-1}\partial_{t,\rho}u\to0\quad(\varepsilon\to0+)
\]
the assertion follows.\\
We now prove that $\dom(\partial_{t,\rho})$ is a core for $\partial_{t,\rho}\mathcal{M}$.
More precisely, we show that for $u\in\dom(\partial_{t,\rho}\mathcal{M})$
and $u_{\varepsilon}\coloneqq(1+\varepsilon\partial_{t,\rho})^{-1}u$
with $\varepsilon>0$ we have that 
\[
\partial_{t,\rho}\mathcal{M}u_{\varepsilon}\to\partial_{t,\rho}\mathcal{M}u\quad(\varepsilon\to0+).
\]
Indeed, we have that 
\begin{align*}
\partial_{t,\rho}\mathcal{M}u_{\varepsilon} & =\partial_{t,\rho}\mathcal{M}(1+\varepsilon\partial_{t,\rho})^{-1}u\\
 & =(1+\varepsilon\partial_{t,\rho})^{-1}\partial_{t,\rho}\mathcal{M}u+\varepsilon\partial_{t,\rho}(1+\varepsilon\partial_{t,\rho})^{-1}\mathcal{M}'(1+\varepsilon\partial_{t,\rho})^{-1}u\\
 & =(1+\varepsilon\partial_{t,\rho})^{-1}\partial_{t,\rho}\mathcal{M}u+\mathcal{M}'(1+\varepsilon\partial_{t,\rho})^{-1}u-(1+\varepsilon\partial_{t,\rho})^{-1}\mathcal{M}'(1+\varepsilon\partial_{t,\rho})^{-1}u\\
 & \to\partial_{t,\rho}\mathcal{M}u\quad(\varepsilon\to0+).
\end{align*}

Hence, it suffices to approximate elements $u\in\dom(\partial_{t,\rho})$
by a sequence $(\varphi_{n})_{n}$ in $C_{c}^{\infty}(\R;H)$ such
that $\varphi_{n}\to u$ and $\partial_{t,\rho}\mathcal{M}\varphi_{n}\to\partial_{t,\rho}\mathcal{M}u$
in $L_{2,\rho}(\R;H).$ For doing so, we choose a sequence $(\varphi_{n})_{n}$
in $C_{c}^{\infty}(\R;H)$ such that $\varphi_{n}\to u$ in $H_{\rho}^{1}(\R;H)$.
Then, in particular, $\varphi_{n}\to u$ in $L_{2,\rho}(\R;H)$ and
\[
\partial_{t,\rho}\mathcal{M}\varphi_{n}=\mathcal{M}\partial_{t,\rho}\varphi_{n}+\mathcal{M}'\varphi_{n}\to\mathcal{M}\partial_{t,\rho}u+\mathcal{M}'u=\partial_{t,\rho}\mathcal{M}u.
\]
We now prove the maximal monotonicity of $\partial_{t,\rho}\mathcal{M}+\mathcal{N}-c$.
As $C_{c}^{\infty}(\R;H)$ is a core for this operator, the monotonicity
follows by \prettyref{hyp:assumptions} (b). We claim that $C_{c}^{\infty}(\R;H)$
is also a core for $\left(\partial_{t,\rho}\mathcal{M}+\mathcal{N}\right)^{\ast}$.
Assuming that this is true, it follows from \prettyref{hyp:assumptions}
(b) that $\left(\partial_{t,\rho}\mathcal{M}+\mathcal{N}\right)^{\ast}$
is one-to-one and hence, $\partial_{t,\rho}\mathcal{M}+\mathcal{N}$
has dense range. Since $\partial_{t,\rho}\mathcal{M}+\mathcal{N}$
is also closed and its inverse is bounded by what we have shown before,
$\partial_{t,\rho}\mathcal{M}+\mathcal{N}$ is indeed onto and thus,
$\partial_{t,\rho}\mathcal{M}+\mathcal{N}-c$ is maximal monotone.
Thus, we are left to show that $C_{c}^{\infty}(\R;H)$ is a core for
$\left(\partial_{t,\rho}\mathcal{M}+\mathcal{N}\right)^{\ast}$. For
showing this, we observe that
\[
\left(\partial_{t,\rho}\mathcal{M}+\mathcal{N}\right)^{\ast}=(\partial_{t,\rho}\mathcal{M})^{\ast}+\mathcal{N}^{\ast}=(\mathcal{M}\partial_{t,\rho}+\mathcal{M}')^{\ast}+\mathcal{N}^{\ast}=\partial_{t,\rho}^{\ast}\mathcal{M}^{\ast}+(\mathcal{M}')^{\ast}+\mathcal{N}^{\ast},
\]
where we have used that $C_{c}^{\infty}(\R;H)$ is a core for $\partial_{t,\rho}\mathcal{M}.$
Thus, it suffices to show that $C_{c}^{\infty}(\R;H)$ is a core for
$\partial_{t,\rho}^{\ast}\mathcal{M}^{\ast}$. However, since 
\[
\mathcal{M}^{\ast}\partial_{t,\rho}^{\ast}\subseteq(\partial_{t,\rho}\mathcal{M})^{\ast}=\partial_{t,\rho}^{\ast}\mathcal{M}^{\ast}+(\mathcal{M}')^{\ast}
\]
we can follow the same lines as above and obtain the assertion. 
\end{proof}
In order to prove \prettyref{thm:main} we adopt the idea presented
in \cite{Trostorff2013_nonautoincl} and first prove the well-posedness
of an auxiliary problem. The well-posedness of the original problem
will then follow by the perturbation result \prettyref{prop:pert}.
The auxiliary problem reads as follows
\begin{equation}
(u,f)\in\overline{\partial_{t,\rho}\mathcal{M}-\mathcal{M}'+\delta+\mathcal{A}}\label{eq:aux}
\end{equation}
for a suitable $\delta>0.$ 
\begin{lem}
\label{lem:matriallaw-aux}Let $\mathcal{L}\in L(L_{2,\rho}(\R;H))$
and $\delta>0$. Then $\partial_{t,\rho}\mathcal{M}+\mathcal{L}+\delta$
is $(c+\delta-\|\mathcal{L}-\mathcal{N}\|)$-maximal monotone.
\end{lem}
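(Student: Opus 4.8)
The plan is to realise $\partial_{t,\rho}\mathcal{M}+\mathcal{L}+\delta$ as a bounded (hence Lipschitz) perturbation of $\partial_{t,\rho}\mathcal{M}+\mathcal{N}$, whose $c$-maximal monotonicity is already available from \prettyref{prop:max_mon_materiallaw}, and then to invoke the perturbation result \prettyref{prop:pert}. Concretely, I would set $A\coloneqq\partial_{t,\rho}\mathcal{M}+\mathcal{N}-c$ and $B\coloneqq\mathcal{L}-\mathcal{N}+\|\mathcal{L}-\mathcal{N}\|\in L(L_{2,\rho}(\R;H))$, and observe that
\[
\partial_{t,\rho}\mathcal{M}+\mathcal{L}+\delta-\bigl(c+\delta-\|\mathcal{L}-\mathcal{N}\|\bigr)=A+B.
\]
Here $A$ is maximal monotone by \prettyref{prop:max_mon_materiallaw}, and $B$, being bounded and linear, is in particular Lipschitz continuous; so it remains, by \prettyref{prop:pert}, to verify that $A+B$ is monotone.

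Since $B$ is linear, monotonicity of $A+B$ will follow as soon as $B$ itself is monotone, and this is immediate from the Cauchy--Schwarz inequality: for every $\varphi\in L_{2,\rho}(\R;H)$,
\[
\Re\langle B\varphi,\varphi\rangle_{\rho}=\Re\langle(\mathcal{L}-\mathcal{N})\varphi,\varphi\rangle_{\rho}+\|\mathcal{L}-\mathcal{N}\|\,|\varphi|_{\rho}^{2}\geq-\|\mathcal{L}-\mathcal{N}\|\,|\varphi|_{\rho}^{2}+\|\mathcal{L}-\mathcal{N}\|\,|\varphi|_{\rho}^{2}=0.
\]
Then for $(u,v),(x,y)\in A$ one has $\Re\langle u-x,(v+Bu)-(y+Bx)\rangle_{\rho}=\Re\langle u-x,v-y\rangle_{\rho}+\Re\langle u-x,B(u-x)\rangle_{\rho}\geq0$, using the monotonicity of $A$ and of $B$ separately; hence $A+B$ is monotone.

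Finally, \prettyref{prop:pert} yields that $A+B$ is maximal monotone, i.e.\ that $\partial_{t,\rho}\mathcal{M}+\mathcal{L}+\delta-(c+\delta-\|\mathcal{L}-\mathcal{N}\|)$ is maximal monotone, which is precisely the asserted $(c+\delta-\|\mathcal{L}-\mathcal{N}\|)$-maximal monotonicity. I do not expect a genuine obstacle here: the only point that needs care is the bookkeeping of the monotonicity constant --- the shift $c+\delta-\|\mathcal{L}-\mathcal{N}\|$ is chosen exactly so that the residual bounded operator $\mathcal{L}-\mathcal{N}+\|\mathcal{L}-\mathcal{N}\|$ is monotone --- together with the observation that this residual operator being everywhere defined and linear is what makes \prettyref{prop:pert} applicable and reduces the monotonicity check to the one-line estimate above.
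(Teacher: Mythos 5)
Your proof is correct and follows essentially the same route as the paper: the identical decomposition $\partial_{t,\rho}\mathcal{M}+\mathcal{L}+\delta-(c+\delta-\|\mathcal{L}-\mathcal{N}\|)=(\partial_{t,\rho}\mathcal{M}+\mathcal{N}-c)+(\mathcal{L}-\mathcal{N}+\|\mathcal{L}-\mathcal{N}\|)$, combined with \prettyref{prop:max_mon_materiallaw} and the perturbation result \prettyref{prop:pert}. You merely spell out the Cauchy--Schwarz verification of the monotonicity of the bounded residual in slightly more detail than the paper does.
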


\begin{proof}
For $u\in\dom(\partial_{t,\rho}\mathcal{M})$ we have that
\begin{align*}
\Re\langle(\partial_{t,\rho}\mathcal{M}+\mathcal{L}+\delta)u,u\rangle_{\rho} & =\Re\langle\left(\partial_{t,\rho}\mathcal{M}+\mathcal{N}\right)u,u\rangle_{\rho}+\langle(\delta+(\mathcal{L}-\mathcal{N}))u,u\rangle_{\rho}\\
 & \geq\left(c+\delta-\|\mathcal{L}-\mathcal{N}\|\right)|u|_{\rho}^{2},
\end{align*}
where we have used \prettyref{prop:max_mon_materiallaw}. Moreover,
since 
\[
\partial_{t,\rho}\mathcal{M}+\mathcal{L}+\delta-\left(c+\delta-\|\mathcal{L}-\mathcal{N}\|\right)=\left(\partial_{t,\rho}\mathcal{M}+\mathcal{N}-c\right)+(\|\mathcal{L}-\mathcal{N}\|+\mathcal{L}-\mathcal{N})
\]
the assertion follows by \prettyref{prop:pert}.
\end{proof}
As an immediate consequence we derive the following proposition.
\begin{prop}
\label{prop:c-monotne_aux}Let $\delta>\|\mathcal{M}'+\mathcal{N}\|$.
Then $\partial_{t,\rho}\mathcal{M}-\mathcal{M}'+\delta+\mathcal{A}$
is $c$-monotone and hence, \textup{$\left(\partial_{t,\rho}\mathcal{M}-\mathcal{M}'+\delta+\mathcal{A}\right)^{-1}$
is a Lipschitz-continuous mapping defined on some subset of $L_{2,\rho}(\R;H).$ }
\end{prop}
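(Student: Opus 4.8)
The plan is to reduce the statement to Lemma~\prettyref{lem:matriallaw-aux} together with the elementary observation that adding a monotone relation preserves $c$-monotonicity. First I would apply Lemma~\prettyref{lem:matriallaw-aux} with the choice $\mathcal{L}\coloneqq-\mathcal{M}'$. Since then $\partial_{t,\rho}\mathcal{M}+\mathcal{L}+\delta=\partial_{t,\rho}\mathcal{M}-\mathcal{M}'+\delta$ and $\|\mathcal{L}-\mathcal{N}\|=\|-\mathcal{M}'-\mathcal{N}\|=\|\mathcal{M}'+\mathcal{N}\|$, the lemma yields that $\partial_{t,\rho}\mathcal{M}-\mathcal{M}'+\delta$ is $(c+\delta-\|\mathcal{M}'+\mathcal{N}\|)$-maximal monotone. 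By the hypothesis $\delta>\|\mathcal{M}'+\mathcal{N}\|$ the number $c'\coloneqq c+\delta-\|\mathcal{M}'+\mathcal{N}\|$ satisfies $c'>c$, so $\partial_{t,\rho}\mathcal{M}-\mathcal{M}'+\delta$ is in particular $c$-monotone; concretely, $\Re\langle u-x,v-y\rangle_{\rho}\geq c'|u-x|_{\rho}^{2}\geq c|u-x|_{\rho}^{2}$ whenever $(u,v),(x,y)$ lie in this operator.

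Next I would add the maximal monotone relation $\mathcal{A}$. Here the argument is purely formal: if $B\subseteq L_{2,\rho}(\R;H)\times L_{2,\rho}(\R;H)$ is $c$-monotone and $\mathcal{A}$ is monotone, then $B+\mathcal{A}$ is $c$-monotone, since for $(u,v_{1}+w_{1}),(x,v_{2}+w_{2})\in B+\mathcal{A}$ with $(u,v_{1}),(x,v_{2})\in B$ and $(u,w_{1}),(x,w_{2})\in\mathcal{A}$ one simply adds $\Re\langle u-x,v_{1}-v_{2}\rangle_{\rho}\geq c|u-x|_{\rho}^{2}$ and $\Re\langle u-x,w_{1}-w_{2}\rangle_{\rho}\geq0$. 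Taking $B=\partial_{t,\rho}\mathcal{M}-\mathcal{M}'+\delta$ gives that $\partial_{t,\rho}\mathcal{M}-\mathcal{M}'+\delta+\mathcal{A}$ is $c$-monotone (in fact $c'$-monotone), which is the first assertion.

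Finally, the Lipschitz statement is the standard consequence of $c$-monotonicity. If $(u,f)$ and $(x,g)$ both belong to $\partial_{t,\rho}\mathcal{M}-\mathcal{M}'+\delta+\mathcal{A}$, then $\Re\langle u-x,f-g\rangle_{\rho}\geq c|u-x|_{\rho}^{2}$, and the Cauchy--Schwarz inequality gives $|u-x|_{\rho}\,|f-g|_{\rho}\geq c|u-x|_{\rho}^{2}$, hence $|u-x|_{\rho}\leq\frac1c|f-g|_{\rho}$. In particular, $f=g$ forces $u=x$, so the inverse relation $\left(\partial_{t,\rho}\mathcal{M}-\mathcal{M}'+\delta+\mathcal{A}\right)^{-1}$ is a single-valued mapping on its domain $\ran\!\left(\partial_{t,\rho}\mathcal{M}-\mathcal{M}'+\delta+\mathcal{A}\right)\subseteq L_{2,\rho}(\R;H)$, Lipschitz-continuous there with Lipschitz constant at most $\frac1c$.

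I do not expect a real obstacle in this proof; it is essentially bookkeeping. The only points requiring a little care are the identity $\|-\mathcal{M}'-\mathcal{N}\|=\|\mathcal{M}'+\mathcal{N}\|$ when invoking Lemma~\prettyref{lem:matriallaw-aux}, and the fact that the domain $\dom(\partial_{t,\rho}\mathcal{M})\cap\dom(\mathcal{A})$ of the sum might a priori be small. The latter is harmless here, since the proposition only claims that the inverse is defined on \emph{some} subset of $L_{2,\rho}(\R;H)$; that this subset is actually all of $L_{2,\rho}(\R;H)$ is precisely what must be shown afterwards by means of Proposition~\prettyref{prop:pert-yosida} and Corollary~\prettyref{cor:bounded_pert}, and is not asserted at this stage.
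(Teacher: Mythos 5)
Your proof is correct and follows exactly the route the paper intends: the paper states this proposition as an immediate consequence of Lemma \prettyref{lem:matriallaw-aux} (with $\mathcal{L}=-\mathcal{M}'$) and gives no further argument, so your explicit bookkeeping — the norm identity, the passage from $c'$-monotonicity to $c$-monotonicity, the addition of the monotone relation $\mathcal{A}$, and the Cauchy--Schwarz step for the Lipschitz bound — is precisely what is being left implicit there.
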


Thus, in order to prove that \prettyref{eq:aux} is well-posed, it
suffices to prove that the domain of $\left(\partial_{t,\rho}\mathcal{M}-\mathcal{M}'+\delta+\mathcal{A}\right)^{-1}$
is dense in $L_{2,\rho}(\R;H).$ We will prove that for each $f\in H_{\rho}^{1}(\R;H)$
there exists $u\in H_{\rho}^{1}(\R;H)$ such that 
\[
(u,f)\in\partial_{t,\rho}\mathcal{M}-\mathcal{M}'+\delta+\mathcal{A},
\]
which in particular would give 
\[
H_{\rho}^{1}(\R;H)\subseteq\dom\left(\left(\partial_{t,\rho}\mathcal{M}-\mathcal{M}'+\delta+\mathcal{A}\right)^{-1}\right)
\]

and thus, the well-posedness of \prettyref{eq:aux} would follow.
For doing so, let $f\in H_{\rho}^{1}(\R;H)$ and define 
\begin{equation}
u_{\lambda}\coloneqq\left(\partial_{t,\rho}\mathcal{M}-\mathcal{M}'+\delta+\mathcal{A}_{\lambda}\right)^{-1}(f)\quad(\lambda>0).\label{eq:u_lambda}
\end{equation}
Note that $\left(\partial_{t,\rho}\mathcal{M}-\mathcal{M}'+\delta+\mathcal{A}_{\lambda}\right)^{-1}$
with $\delta>\|\mathcal{M}'+\mathcal{N}\|$ is a Lipschitz-continuous
mapping on $L_{2,\rho}(\R;H)$ defined on the whole space by \prettyref{prop:pert}
and \prettyref{lem:matriallaw-aux} and thus $u_{\lambda}\in L_{2,\rho}(\R;H)$
is defined. Our first goal is to prove the following theorem.
\begin{thm}
\label{thm:regular_solution}Let $\delta>\|\mathcal{M}'\|+\|\mathcal{N}\|$
and $f\in H_{\rho}^{1}(\R;H).$ For $\lambda>0$ set 
\[
u_{\lambda}\coloneqq\left(\partial_{t,\rho}\mathcal{M}-\mathcal{M}'+\delta+\mathcal{A}_{\lambda}\right)^{-1}(f).
\]
Then $u_{\lambda}\in H_{\rho}^{1}(\R;H)$ and 
\[
|u_{\lambda}|_{\rho,1}\leq\frac{1}{c}|f|_{\rho,1}.
\]
\end{thm}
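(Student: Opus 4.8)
The plan is to exploit the translation invariance of $\mathcal{A}$ (and hence of $\mathcal{A}_\lambda$) together with Lemma \ref{lem:difference_quotient}: to show $u_\lambda \in H_\rho^1(\R;H)$ it suffices to bound the difference quotients $D_h u_\lambda$ uniformly in $h\in\,]0,1]$ in $L_{2,\rho}(\R;H)$, and the bound one gets should be $\tfrac1c|f|_{\rho,1}$, which passes to the limit $h\to0+$ to give $|\partial_{t,\rho}u_\lambda|_\rho \le \tfrac1c|f|_{\rho,1}$, i.e. $|u_\lambda|_{\rho,1}\le\tfrac1c|f|_{\rho,1}$.

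First I would record that $\tau_h$ commutes with $\partial_{t,\rho}$, with $\mathcal{M}$, $\mathcal{M}'$, $\mathcal{N}$? — here a subtlety: the bounded operators $\mathcal{M},\mathcal{N},\mathcal{M}'$ need \emph{not} commute with $\tau_h$ in the non-autonomous case. So instead I would apply $\tau_h$ directly to the defining inclusion. From \eqref{eq:u_lambda} we have $f = \partial_{t,\rho}\mathcal{M}u_\lambda - \mathcal{M}'u_\lambda + \delta u_\lambda + \mathcal{A}_\lambda(u_\lambda)$. Apply $\tau_h$ and subtract the original equation; divide by $h$. Writing $v\coloneqq D_h u_\lambda$, one obtains
\[
D_h f = \partial_{t,\rho}\mathcal{M}_h v + \partial_{t,\rho}(D_h\mathcal{M})u_\lambda + \cdots + \tfrac1h\big(\mathcal{A}_\lambda(\tau_h u_\lambda) - \mathcal{A}_\lambda(u_\lambda)\big),
\]
where $\mathcal{M}_h\coloneqq \tau_h\mathcal{M}\tau_{-h}$ and $D_h\mathcal{M}\coloneqq\tfrac1h(\tau_h\mathcal{M}\tau_{-h}-\mathcal{M})$ etc. This looks messy; the cleaner route, and the one I would actually take, is: since $\tau_h$ is unitary on $L_{2,\rho}$ and $(\tau_h u, \tau_h v)\in\mathcal{A}$ whenever $(u,v)\in\mathcal{A}$ and $h\ge0$, the operator $\tau_h$ maps the graph of $\partial_{t,\rho}\mathcal{M}-\mathcal{M}'+\delta+\mathcal{A}_\lambda$ into the graph of $\partial_{t,\rho}(\tau_h\mathcal{M}\tau_{-h}) - (\tau_h\mathcal{M}'\tau_{-h}) + \delta + \mathcal{A}_\lambda$ applied to $\tau_h u_\lambda$. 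But rather than chase commutators, I expect the intended argument tests the equation for $\tau_h u_\lambda - u_\lambda$ against itself and uses monotonicity of $\mathcal{A}_\lambda$ plus Hypotheses \ref{hyp:assumptions}(b).

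Concretely: take the inclusion satisfied by $u_\lambda$ at argument shifted by $h$, subtract, pair with $\tau_h u_\lambda - u_\lambda$ in $\langle\cdot,\cdot\rangle_\rho$, take real parts. The term $\Re\langle \mathcal{A}_\lambda(\tau_h u_\lambda)-\mathcal{A}_\lambda(u_\lambda),\tau_h u_\lambda - u_\lambda\rangle_\rho \ge 0$ by monotonicity of the Yosida approximation and drops out. The principal part contributes $\Re\langle(\partial_{t,\rho}\mathcal{M}+\mathcal{N})(\tau_h u_\lambda - u_\lambda),\tau_h u_\lambda-u_\lambda\rangle_\rho \ge c|\tau_h u_\lambda - u_\lambda|_\rho^2$, provided one first checks $\tau_h u_\lambda - u_\lambda \in \dom(\partial_{t,\rho}\mathcal{M})$ (true because $u_\lambda$ is, and one needs $\tau_h u_\lambda\in\dom(\partial_{t,\rho}\mathcal{M})$, which follows once the commutator relation (a) is used to see $\tau_h$ preserves that domain up to the bounded correction $\mathcal{M}'$). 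The choice $\delta>\|\mathcal{M}'\|+\|\mathcal{N}\|$ is what guarantees, via Lemma \ref{lem:matriallaw-aux}/Proposition \ref{prop:c-monotne_aux}, that all the "extra" bounded terms $-\mathcal{M}'+\delta$ together with $\mathcal{N}$ recombine into something with the clean lower bound $c$. The right-hand side gives $\le |\tau_h f - f|_\rho\,|\tau_h u_\lambda - u_\lambda|_\rho$. Dividing through by $|\tau_h u_\lambda - u_\lambda|_\rho$ and then by $h$ yields $|D_h u_\lambda|_\rho \le \tfrac1c|D_h f|_\rho$. Since $f\in H_\rho^1$, Lemma \ref{lem:difference_quotient} gives $\sup_{h\in]0,1]}|D_h f|_\rho<\infty$, hence $\sup_h|D_h u_\lambda|_\rho<\infty$, so $u_\lambda\in H_\rho^1(\R;H)$ by the same lemma, and letting $h\to0+$ in $|D_h u_\lambda|_\rho \le \tfrac1c|D_h f|_\rho$ gives $|\partial_{t,\rho}u_\lambda|_\rho\le\tfrac1c|\partial_{t,\rho}f|_\rho$, i.e. $|u_\lambda|_{\rho,1}\le\tfrac1c|f|_{\rho,1}$.

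The main obstacle I anticipate is the bookkeeping around the domain and the non-commutativity of $\mathcal{M}$ with $\tau_h$: one must justify that $\tau_h u_\lambda\in\dom(\partial_{t,\rho}\mathcal{M})$ and correctly identify $\partial_{t,\rho}\mathcal{M}(\tau_h u_\lambda)$ so that subtracting reproduces the operator $\partial_{t,\rho}\mathcal{M}+\mathcal{N}$ acting on the difference, with all leftover terms bounded in operator norm by a constant controlled by $\|\mathcal{M}'\|+\|\mathcal{N}\|<\delta$. Using the core property from Proposition \ref{prop:max_mon_materiallaw} (approximate $u_\lambda$ by $C_c^\infty$ functions, for which everything is classical, then pass to the limit) is the safe way to make this rigorous. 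Everything else — monotonicity of $\mathcal{A}_\lambda$, the coercivity estimate, the difference-quotient characterisation — is already available in the excerpt.
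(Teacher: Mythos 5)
Your strategy --- translate the resolvent equation by $\tau_{h}$, subtract, test against $\tau_{h}u_{\lambda}-u_{\lambda}$, use monotonicity of $\mathcal{A}_{\lambda}$ and the coercivity of $\partial_{t,\rho}\mathcal{M}+\mathcal{N}$ --- is the standard argument in the autonomous case, but it has a genuine gap under \prettyref{hyp:assumptions}, and the gap sits exactly where you wave your hands. After applying $\tau_{h}$ and subtracting, the principal part is \emph{not} $(\partial_{t,\rho}\mathcal{M}-\mathcal{M}'+\delta)(\tau_{h}u_{\lambda}-u_{\lambda})$; it is that plus an error of the form $\partial_{t,\rho}\bigl(\tau_{h}\mathcal{M}-\mathcal{M}\tau_{h}\bigr)u_{\lambda}$ (and similarly for $\mathcal{M}'$). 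Hypothesis (a) only controls the infinitesimal commutator $[\partial_{t,\rho},\mathcal{M}]=\mathcal{M}'$; even if one grants that $\frac{1}{h}(\tau_{h}\mathcal{M}-\mathcal{M}\tau_{h})$ is uniformly bounded in operator norm by $\|\mathcal{M}'\|$ (which is itself not immediate, and note that $\tau_{h}$ is not unitary on $L_{2,\rho}(\R;H)$ --- its norm is $\e^{\rho h}$), your error term carries an additional outer factor of the unbounded operator $\partial_{t,\rho}$. To control $\partial_{t,\rho}\frac{1}{h}(\tau_{h}\mathcal{M}-\mathcal{M}\tau_{h})u_{\lambda}$ in $L_{2,\rho}(\R;H)$, or to move the derivative onto the test function, you would need $H_{\rho}^{1}$-regularity of $u_{\lambda}$ or of $\tau_{h}u_{\lambda}-u_{\lambda}$ --- the very thing being proved. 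The same circularity afflicts your preliminary claim that $\tau_{h}u_{\lambda}\in\dom(\partial_{t,\rho}\mathcal{M})$: from $u_{\lambda}\in\dom(\partial_{t,\rho}\mathcal{M})$ you get $\tau_{h}\mathcal{M}u_{\lambda}\in\dom(\partial_{t,\rho})$, but you need $\mathcal{M}\tau_{h}u_{\lambda}\in\dom(\partial_{t,\rho})$, and the difference is again an uncontrolled commutator. So the leftover terms are not ``bounded in operator norm by a constant controlled by $\|\mathcal{M}'\|+\|\mathcal{N}\|$'' once the outer $\partial_{t,\rho}$ is taken into account, and approximating $u_{\lambda}$ by $C_{c}^{\infty}$ functions via the core property does not help, since the approximants no longer solve the equation.

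The paper avoids differentiating the equation altogether and instead conjugates the whole problem with $\partial_{t,\rho}$. Writing $v=\partial_{t,\rho}u_{\lambda}$, the relation $\mathcal{A}_{\lambda}$ becomes $\mathcal{B}_{\lambda}=\partial_{t,\rho}\mathcal{A}_{\lambda}\partial_{t,\rho}^{-1}$ --- and it is only here, for the translation-invariant object $\mathcal{A}_{\lambda}$, that the difference-quotient argument of \prettyref{lem:difference_quotient} is used, to show that $\mathcal{B}_{\lambda}$ preserves $L_{2,\rho}(\R;H)$ and remains monotone (\prettyref{prop:properties_B}) --- while $\partial_{t,\rho}\mathcal{M}-\mathcal{M}'+\delta$ becomes the $H_{\rho}^{-1}$-closure $\mathcal{T}_{\delta}$ of $\partial_{t,\rho}\mathcal{M}+\delta$ (\prettyref{prop:T_delta}); the commutator hypothesis is absorbed exactly through $\partial_{t,\rho}\mathcal{M}\partial_{t,\rho}^{-1}=\mathcal{M}+\mathcal{M}'\partial_{t,\rho}^{-1}$. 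The transformed equation $(\mathcal{T}_{\delta}+\mathcal{B}_{\lambda})v=\partial_{t,\rho}f$ is then solved in $L_{2,\rho}(\R;H)$ by a fixed-point argument for large $\delta$ combined with a continuation argument in $\delta$ (\prettyref{lem:uniform_bound_K} through \prettyref{prop:K is all}), and one checks directly that $\partial_{t,\rho}^{-1}v$ solves the original resolvent equation, whence $u_{\lambda}=\partial_{t,\rho}^{-1}v\in H_{\rho}^{1}(\R;H)$ and $|u_{\lambda}|_{\rho,1}=|v|_{\rho}\leq\frac{1}{c}|f|_{\rho,1}$. Your argument would become correct if $\mathcal{M},\mathcal{M}',\mathcal{N}$ were assumed to commute with every $\tau_{h}$, i.e.\ essentially in the autonomous case.
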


In order to prove this theorem, we define the following mapping
\[
\mathcal{B}_{\lambda}\coloneqq\partial_{t,\rho}\mathcal{A}_{\lambda}\partial_{t,\rho}^{-1}:H_{\rho}^{-1}(\R;H)\to H_{\rho}^{-1}(\mathbb{R};H).
\]
Then, by unitary equivalence, $\mathcal{B}_{\lambda}$ is maximal
monotone and Lipschitz-continuous on $H_{\rho}^{-1}(\R;H)$ with $|\mathcal{B}_{\lambda}|_{\mathrm{Lip}}\leq\frac{1}{\lambda}.$
However, we can also interpret $\mathcal{B}_{\lambda}$ as a mapping
on $L_{2,\rho}(\R;H)$ as the following proposition shows.
\begin{prop}
\label{prop:properties_B}Let $\lambda>0$ and define 
\[
\mathcal{B}_{\lambda}\coloneqq\partial_{t,\rho}\mathcal{A}_{\lambda}\partial_{t,\rho}^{-1}:H_{\rho}^{-1}(\R;H)\to H_{\rho}^{-1}(\R;H).
\]
Then $\mathcal{B}_{\lambda}[L_{2,\rho}(\R;H)]\subseteq L_{2,\rho}(\R;H)$
and for $u\in L_{2,\rho}(\R;H)$ we have that 
\begin{align*}
\Re\langle\mathcal{B}_{\lambda}(u),u\rangle_{\rho} & \geq0,\\
|\mathcal{B}_{\lambda}(u)|_{\rho} & \leq\frac{1}{\lambda}|u|_{\rho}.
\end{align*}
\end{prop}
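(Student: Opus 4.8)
The plan is to exploit the forward-translation invariance of $\mathcal{A}$ together with the difference-quotient characterisation of $H_{\rho}^{1}(\R;H)$ from \prettyref{lem:difference_quotient}. First I would record the elementary observation that, since $(u,v)\in\mathcal{A}$ implies $(\tau_{h}u,\tau_{h}v)\in\mathcal{A}$ for all $h\geq0$, the resolvent $(1+\lambda\mathcal{A})^{-1}$ commutes with $\tau_{h}$ for $h\geq0$: if $w=(1+\lambda\mathcal{A})^{-1}z$, i.e. $(w,\tfrac{1}{\lambda}(z-w))\in\mathcal{A}$, then $(\tau_{h}w,\tfrac{1}{\lambda}(\tau_{h}z-\tau_{h}w))\in\mathcal{A}$, so $\tau_{h}w=(1+\lambda\mathcal{A})^{-1}\tau_{h}z$. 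Consequently the Yosida approximation $\mathcal{A}_{\lambda}=\tfrac{1}{\lambda}(1-(1+\lambda\mathcal{A})^{-1})$ also satisfies $\mathcal{A}_{\lambda}\tau_{h}=\tau_{h}\mathcal{A}_{\lambda}$ for $h\geq0$. This is the structural fact that makes the difference quotient of $\mathcal{A}_{\lambda}$ tractable.

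Next, given $u\in L_{2,\rho}(\R;H)$, set $v\coloneqq\partial_{t,\rho}^{-1}u\in\dom(\partial_{t,\rho})=H_{\rho}^{1}(\R;H)$, so that $\mathcal{B}_{\lambda}(u)=\partial_{t,\rho}\mathcal{A}_{\lambda}(v)$ provided the right-hand side makes sense in $L_{2,\rho}(\R;H)$ — which is exactly what has to be shown. For $h>0$ I would compute, using the translation commutation,
\[
D_{h}\mathcal{A}_{\lambda}(v)=\tfrac{1}{h}\bigl(\tau_{h}\mathcal{A}_{\lambda}(v)-\mathcal{A}_{\lambda}(v)\bigr)=\tfrac{1}{h}\bigl(\mathcal{A}_{\lambda}(\tau_{h}v)-\mathcal{A}_{\lambda}(v)\bigr),
\]
and then the Lipschitz estimate $|\mathcal{A}_{\lambda}|_{\mathrm{Lip}}\leq\tfrac{1}{\lambda}$ yields $|D_{h}\mathcal{A}_{\lambda}(v)|_{\rho}\leq\tfrac{1}{\lambda}|D_{h}v|_{\rho}$. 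Since $v\in H_{\rho}^{1}(\R;H)$, \prettyref{lem:difference_quotient} shows that $(D_{h}v)_{h\in]0,1]}$ is bounded and $D_{h}v\to\partial_{t,\rho}v=u$ in $L_{2,\rho}(\R;H)$; hence $(D_{h}\mathcal{A}_{\lambda}(v))_{h\in]0,1]}$ is bounded in $L_{2,\rho}(\R;H)$, and applying \prettyref{lem:difference_quotient} once more gives $\mathcal{A}_{\lambda}(v)\in H_{\rho}^{1}(\R;H)$ together with $D_{h}\mathcal{A}_{\lambda}(v)\to\partial_{t,\rho}\mathcal{A}_{\lambda}(v)$. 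This proves $\mathcal{B}_{\lambda}(u)=\partial_{t,\rho}\mathcal{A}_{\lambda}(v)\in L_{2,\rho}(\R;H)$, and letting $h\to0+$ in the above estimate gives $|\mathcal{B}_{\lambda}(u)|_{\rho}\leq\tfrac{1}{\lambda}|u|_{\rho}$.

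For the positivity estimate I would again pass through the difference quotients. Using monotonicity of $\mathcal{A}_{\lambda}$,
\[
\Re\langle D_{h}\mathcal{A}_{\lambda}(v),D_{h}v\rangle_{\rho}=\tfrac{1}{h^{2}}\Re\langle\mathcal{A}_{\lambda}(\tau_{h}v)-\mathcal{A}_{\lambda}(v),\tau_{h}v-v\rangle_{\rho}\geq0,
\]
and since $D_{h}\mathcal{A}_{\lambda}(v)\to\partial_{t,\rho}\mathcal{A}_{\lambda}(v)=\mathcal{B}_{\lambda}(u)$ and $D_{h}v\to u$ in $L_{2,\rho}(\R;H)$, letting $h\to0+$ yields $\Re\langle\mathcal{B}_{\lambda}(u),u\rangle_{\rho}\geq0$.

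The only genuine obstacle is the first step: ensuring that $\mathcal{A}_{\lambda}$ inherits the forward-translation invariance of $\mathcal{A}$. Once that is in hand, everything reduces to combining the Lipschitz and monotonicity properties of the Yosida approximation with \prettyref{lem:difference_quotient}, and the remaining computations are routine. One should also keep in mind that only forward shifts $h\geq0$ are available, which is precisely the range needed to apply \prettyref{lem:difference_quotient}.
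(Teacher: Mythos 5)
Your proposal is correct and follows essentially the same route as the paper: reduce membership of $\mathcal{A}_{\lambda}(\partial_{t,\rho}^{-1}u)$ in $H_{\rho}^{1}(\R;H)$ to boundedness of difference quotients via Lemma \ref{lem:difference_quotient}, using the Lipschitz bound for the norm estimate and monotonicity of $\mathcal{A}_{\lambda}$ for the positivity. Your explicit verification that the resolvent, and hence $\mathcal{A}_{\lambda}$, commutes with forward translations is a detail the paper uses implicitly in the step $\tau_{h}\mathcal{A}_{\lambda}(\partial_{t,\rho}^{-1}u)=\mathcal{A}_{\lambda}(\tau_{h}\partial_{t,\rho}^{-1}u)$, so spelling it out is a welcome addition rather than a deviation.
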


\begin{proof}
Let $u\in L_{2,\rho}(\R;H).$ In order to prove $\mathcal{B}_{\lambda}(u)\in L_{2,\rho}(\R;H)$
we need to show that $\mathcal{A}_{\lambda}(\partial_{t,\rho}^{-1}u)\in H_{\rho}^{1}(\R;H).$
For doing so, we use the notation from \prettyref{lem:difference_quotient}
and estimate for $h>0$
\begin{align*}
|D_{h}\mathcal{A}_{\lambda}(\partial_{t,\rho}^{-1}u)|_{\rho} & =\frac{1}{h}\left|\tau_{h}\mathcal{A}_{\lambda}(\partial_{t,\rho}^{-1}u)-\mathcal{A}_{\lambda}(\partial_{t,\rho}^{-1}u)\right|_{\rho}\\
 & =\frac{1}{h}\left|\mathcal{A}_{\lambda}(\tau_{h}\partial_{t,\rho}^{-1}u)-\mathcal{A}_{\lambda}(\partial_{t,\rho}^{-1}u)\right|_{\rho}\\
 & \leq\frac{1}{\lambda}|D_{h}\partial_{t,\rho}^{-1}u|_{\rho}.
\end{align*}
As $\partial_{t,\rho}^{-1}u\in H_{\rho}^{1}(\R;H)$ the latter estimate
shows $\mathcal{A}_{\lambda}(\partial_{t,\rho}^{-1}u)\in H_{\rho}^{1}(\R;H)$
according to \prettyref{lem:difference_quotient}. Furthermore, letting
$h$ tend to 0, we also infer that 
\[
|\mathcal{B}_{\lambda}(u)|_{\rho}=\lim_{h\to0+}|D_{h}\mathcal{A}_{\lambda}(\partial_{t,\rho}^{-1}u)|_{\rho}\leq\frac{1}{\lambda}\lim_{h\to0+}|D_{h}\partial_{t,\rho}^{-1}u|_{\rho}=\frac{1}{\lambda}|u|_{\rho}.
\]
Moreover, again by using \prettyref{lem:difference_quotient}, we
estimate 
\begin{align*}
\Re\langle\mathcal{B}_{\lambda}(u),u\rangle_{\rho} & =\Re\langle\partial_{t,\rho}\mathcal{A}_{\lambda}(\partial_{t,\rho}^{-1}u),\partial_{t,\rho}\partial_{t,\rho}^{-1}u\rangle_{\rho}\\
 & =\lim_{h\to0+}\frac{1}{h^{2}}\Re\langle\tau_{h}\mathcal{A}_{\lambda}(\partial_{t,\rho}^{-1}u)-\mathcal{A}_{\lambda}(\partial_{t,\rho}^{-1}u),\tau_{h}\partial_{t,\rho}^{-1}u-\partial_{t,\rho}^{-1}u\rangle_{\rho}\\
 & =\lim_{h\to0+}\frac{1}{h^{2}}\Re\langle\mathcal{A}_{\lambda}(\tau_{h}\partial_{t,\rho}^{-1}u)-\mathcal{A}_{\lambda}(\partial_{t,\rho}^{-1}u),\tau_{h}\partial_{t,\rho}^{-1}u-\partial_{t,\rho}^{-1}u\rangle_{\rho}\\
 & \geq0,
\end{align*}
where we have used the monotonicity of $\mathcal{A}_{\lambda}.$ 
\end{proof}
The reason for considering the mapping $\mathcal{B}_{\lambda}$ is
the following. Assume that $u_{\lambda}$ is given by \prettyref{eq:u_lambda}.
Then, at least formally, $\partial_{t,\rho}u$ satisfies 
\[
(\partial_{t,\rho}\mathcal{M}+\delta+\mathcal{B}_{\lambda})(\partial_{t,\rho}u)=\partial_{t,\rho}f.
\]
Thus, we are led to consider the differential equation
\[
(\partial_{t,\rho}\mathcal{M}+\delta+\mathcal{B}_{\lambda})(v)=g.
\]
Since $\mathcal{B}_{\lambda}$ is Lipschitz-continuous and maximal
monotone on $H_{\rho}^{-1}(\mathbb{R};H)$, the latter equation is
well-posed in $H_{\rho}^{-1}(\R;H)$, if we can show that $\partial_{t,\rho}\mathcal{M}+\delta$
is $c$-maximal monotone on $H_{\rho}^{-1}(\R;H)$ in some sense.
This will be shown in the next proposition.
\begin{prop}
\label{prop:T_delta}Let $\delta>\|\mathcal{M}'\|+\|\mathcal{N}\|.$
Then the mapping 
\[
\partial_{t,\rho}\mathcal{M}+\delta:L_{2,\rho}(\R;H)\subseteq H_{\rho}^{-1}(\R;H)\to H_{\rho}^{-1}(\R;H)
\]
is closable and its closure, denoted by $\mathcal{T}_{\delta}$, is
$(c+\delta-\|\mathcal{M}'+\mathcal{N}\|)$-maximal monotone.
\end{prop}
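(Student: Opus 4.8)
\textit{Proof plan.}
The plan is to conjugate the assertion, which concerns an operator acting from $L_{2,\rho}(\R;H)$ into $H_\rho^{-1}(\R;H)$, into an equivalent statement living entirely inside $L_{2,\rho}(\R;H)$, where it becomes a direct consequence of \prettyref{lem:matriallaw-aux}. Write $V$ for the unitary operator $\partial_{t,\rho}\colon L_{2,\rho}(\R;H)\to H_\rho^{-1}(\R;H)$ furnished by the Sobolev-chain proposition, and denote by $\mathcal{T}_\delta^{0}$ the operator $\partial_{t,\rho}\mathcal{M}+\delta$ on its natural domain $L_{2,\rho}(\R;H)$ (with $\partial_{t,\rho}$ here understood as $V$), regarded as mapping into $H_\rho^{-1}(\R;H)$. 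Since $V$ is unitary, so that $\langle Vx,Vy\rangle_{\rho,-1}=\langle x,y\rangle_\rho$, and commutes with every scalar multiple of the identity, the operator $V^{-1}\mathcal{T}_\delta^{0}V$ in $L_{2,\rho}(\R;H)$ is closable if and only if $\mathcal{T}_\delta^{0}$ is, one has $\overline{\mathcal{T}_\delta^{0}}=V\bigl(\overline{V^{-1}\mathcal{T}_\delta^{0}V}\bigr)V^{-1}$, and, using \prettyref{thm:Minty} for the range condition, $\overline{\mathcal{T}_\delta^{0}}$ is $(c+\delta-\|\mathcal{M}'+\mathcal{N}\|)$-maximal monotone precisely when $\overline{V^{-1}\mathcal{T}_\delta^{0}V}$ is. Thus everything reduces to the latter operator.

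Next I would identify $V^{-1}\mathcal{T}_\delta^{0}V$. Its domain is $\{u\in L_{2,\rho}(\R;H)\,;\,Vu\in L_{2,\rho}(\R;H)\}$, which equals $\dom(\partial_{t,\rho})=H_\rho^{1}(\R;H)$, and for such $u$ one computes, using that $\delta$ is a scalar and $u\in\dom(\partial_{t,\rho})$,
\[
V^{-1}\mathcal{T}_\delta^{0}Vu=\mathcal{M}\partial_{t,\rho}u+\delta u .
\]
By \prettyref{hyp:assumptions} (a), $u\in\dom(\partial_{t,\rho})$ forces $\mathcal{M}u\in\dom(\partial_{t,\rho})$ and $\mathcal{M}\partial_{t,\rho}u=\partial_{t,\rho}\mathcal{M}u-\mathcal{M}'u$; hence $H_\rho^{1}(\R;H)\subseteq\dom(\partial_{t,\rho}\mathcal{M})$ and $V^{-1}\mathcal{T}_\delta^{0}V$ is precisely the restriction of the closed operator $\partial_{t,\rho}\mathcal{M}-\mathcal{M}'+\delta$ to $H_\rho^{1}(\R;H)$.

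It then remains to observe that $H_\rho^{1}(\R;H)=\dom(\partial_{t,\rho})$ is a core for $\partial_{t,\rho}\mathcal{M}-\mathcal{M}'+\delta$. This is already contained in the proof of \prettyref{prop:max_mon_materiallaw}, where $\dom(\partial_{t,\rho})$ was shown to be a core for $\partial_{t,\rho}\mathcal{M}$ via the approximation $u_\varepsilon=(1+\varepsilon\partial_{t,\rho})^{-1}u$, and adding the bounded operator $-\mathcal{M}'+\delta$ affects neither closedness nor the core property. Consequently $\overline{V^{-1}\mathcal{T}_\delta^{0}V}=\partial_{t,\rho}\mathcal{M}-\mathcal{M}'+\delta$, which by \prettyref{lem:matriallaw-aux} applied with $\mathcal{L}=-\mathcal{M}'$ is $(c+\delta-\|\mathcal{M}'+\mathcal{N}\|)$-maximal monotone; transporting this back along $V$ yields the claim.

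I expect the only genuine subtlety to be keeping the two roles of $\partial_{t,\rho}$ apart: the unitary $V$ between the distinct spaces $L_{2,\rho}(\R;H)$ and $H_\rho^{-1}(\R;H)$ on the one hand, and the unbounded derivative on $L_{2,\rho}(\R;H)$ on the other. The point is that $V$ restricted to $H_\rho^{1}(\R;H)$ is exactly the unbounded $\partial_{t,\rho}$, so $V^{-1}\mathcal{T}_\delta^{0}V$ is a genuinely unbounded operator on $L_{2,\rho}(\R;H)$, which is why \prettyref{lem:matriallaw-aux}, rather than any statement about bounded operators, is the relevant input. Once this is kept straight, the transfer of monotonicity and of the Minty range condition along the unitary is routine.
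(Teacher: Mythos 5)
Your argument is correct, and it reorganises the proof around a single structural observation rather than the paper's three separate verifications. The paper proves closability by hand (a sequence argument that passes through $H_{\rho}^{-2}(\R;H)$ to conclude $y=0$), proves the $(c+\delta-\|\mathcal{M}'+\mathcal{N}\|)$-monotonicity by the computation $\Re\langle(\partial_{t,\rho}\mathcal{M}+\delta)v,v\rangle_{\rho,-1}=\Re\langle(\partial_{t,\rho}\mathcal{M}-\mathcal{M}'+\delta)\partial_{t,\rho}^{-1}v,\partial_{t,\rho}^{-1}v\rangle_{\rho}$ together with \prettyref{lem:matriallaw-aux}, and obtains maximality from the density of $\ran(\mathcal{T}_{\delta})$ via $L_{2,\rho}(\R;H)\subseteq\ran(\mathcal{T}_{\delta})$. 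Your conjugation identity $V^{-1}\mathcal{T}_{\delta}^{0}V=(\partial_{t,\rho}\mathcal{M}-\mathcal{M}'+\delta)|_{H_{\rho}^{1}(\R;H)}$ is exactly the computation the paper performs inside the inner product, but by promoting it to a full unitary equivalence you get closability for free (a restriction of a closed operator is closable, and closability is preserved under unitary conjugation) and maximality for free (transported along $V$ via Minty). The price is the extra ingredient that $\dom(\partial_{t,\rho})$ is a core for $\partial_{t,\rho}\mathcal{M}-\mathcal{M}'+\delta$, which you correctly extract from the proof of \prettyref{prop:max_mon_materiallaw} and which is unaffected by the bounded perturbation $-\mathcal{M}'+\delta$; in return you obtain the slightly stronger conclusion that $\mathcal{T}_{\delta}=\partial_{t,\rho}(\partial_{t,\rho}\mathcal{M}-\mathcal{M}'+\delta)\partial_{t,\rho}^{-1}$ with an explicitly identified domain, which the paper never needs. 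Your closing remark about keeping the unitary $V$ and the unbounded derivative apart is indeed the only point where the argument could go wrong, and you handle it correctly.
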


\begin{proof}
To prove the closability, let $(v_{n})_{n\in\mathbb{N}}$ be a sequence
in $L_{2,\rho}(\R;H)$ such that $v_{n}\to0$ and $\left(\partial_{t,\rho}\mathcal{M}+\delta\right)v_{n}\to y$
for some $y\in H_{\rho}^{-1}(\R;H),$ where both convergences are
with respect to the topology in $H_{\rho}^{-1}(\R;H).$ Since clearly
$\delta v_{n}\to0$ in $H_{\rho}^{-1}(\R;H),$ we infer that $\partial_{t,\rho}\mathcal{M}v_{n}\to y$
in $H_{\rho}^{-1}(\R;H).$ Moreover, noting that $\partial_{t,\rho}^{-1}v_{n}\to0$
in $L_{2,\rho}(\R;H)$ we derive
\[
\mathcal{M}v_{n}=\mathcal{M}\partial_{t,\rho}\partial_{t,\rho}^{-1}v_{n}=\partial_{t,\rho}\mathcal{M}\partial_{t,\rho}^{-1}v_{n}-\mathcal{M}'\partial_{t,\rho}^{-1}v_{n}\to0
\]
in $H_{\rho}^{-1}(\R;H)$ and hence, $\partial_{t,\rho}\mathcal{M}v_{n}\to0$
in $H_{\rho}^{-2}(\R;H).$ This gives $y=0$ and thus, the operator
is closable.

Let now $v\in L_{2,\rho}(\R;H).$ Then we estimate
\begin{align*}
\Re\left\langle \left(\partial_{t,\rho}\mathcal{M}+\delta\right)v,v\right\rangle _{\rho,-1} & =\Re\langle\mathcal{M}v+\partial_{t,\rho}^{-1}\delta v,\partial_{t,\rho}^{-1}v\rangle_{\rho}\\
 & =\Re\langle\mathcal{M}\partial_{t,\rho}\partial_{t,\rho}^{-1}v+\partial_{t,\rho}^{-1}\delta v,\partial_{t,\rho}^{-1}v\rangle_{\rho}\\
 & =\Re\langle\left(\partial_{t,\rho}\mathcal{M}-\mathcal{M}'+\delta\right)\partial_{t,\rho}^{-1}v,\partial_{t,\rho}^{-1}v\rangle_{\rho}\\
 & \geq(c+\delta-\|\mathcal{M}'+\mathcal{N}\|)|\partial_{t,\rho}^{-1}v|_{\rho}^{2}\\
 & =(c+\delta-\|\mathcal{M}'+\mathcal{N}\|)|v|_{\rho,-1}^{2},
\end{align*}
where we have used \prettyref{lem:matriallaw-aux}. Since $L_{2,\rho}(\R;H)$
is a core for $\mathcal{T_{\delta}}$, the $(c+\delta-\|\mathcal{M}'+\mathcal{N}\|)$-monotonicity
follows.

For proving the maximal monotonicity, it suffices to show that $\ran(\mathcal{T}_{\delta})$
is dense in $H_{\rho}^{-1}(\R;H)$. Note that, since $\delta>\|\mathcal{N}\|$,
we have that $\partial_{t,\rho}\mathcal{M}+\delta$ is $c$-maximal
monotone on $L_{2,\rho}(\R;H)$ by \prettyref{lem:matriallaw-aux}.
Thus, in particular $L_{2,\rho}(\R;H)\subseteq\ran(\mathcal{T}_{\delta})$
and hence, the assertion follows. 
\end{proof}
\begin{defn*}
We define the set 
\[
K\coloneqq\{\delta>\|\mathcal{M}'\|+\|\mathcal{N}\|\,;\,\forall g\in L_{2,\rho}(\R;H):\,(\mathcal{T}_{\delta}+\mathcal{B}_{\lambda})^{-1}(g)\in L_{2,\rho}(\R;H)\}.
\]
\end{defn*}
We aim to prove that $K=]\|\mathcal{M}'\|+\|\mathcal{N}\|,\infty[.$
We start with the following observation.
\begin{lem}
\label{lem:uniform_bound_K}Let $\delta\in K.$ Then 
\[
\left|\left(\mathcal{T_{\delta}}+\mathcal{B}_{\lambda}\right)^{-1}(g)\right|_{\rho}\leq\frac{1}{c}|g|_{\rho}
\]
for each $g\in L_{2,\rho}(\R;H).$
\end{lem}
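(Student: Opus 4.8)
The plan is to fix $\delta\in K$ and $g\in L_{2,\rho}(\R;H)$, set $v\coloneqq(\mathcal{T}_{\delta}+\mathcal{B}_{\lambda})^{-1}(g)$, and test the defining equation against $v$ in the $L_{2,\rho}$-inner product. By the very definition of $K$ we have $v\in L_{2,\rho}(\R;H)$, and the identity $\mathcal{T}_{\delta}v+\mathcal{B}_{\lambda}v=g$ holds a priori only in $H_{\rho}^{-1}(\R;H)$. The first and essentially only nontrivial task is to upgrade it to an identity in $L_{2,\rho}(\R;H)$.

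For this upgrade I would argue as follows. Since $v$ lies in $L_{2,\rho}(\R;H)$, which is the original domain of the operator whose closure is $\mathcal{T}_{\delta}$ (\prettyref{prop:T_delta}), the closure agrees with the operator there, so $\mathcal{T}_{\delta}v=\partial_{t,\rho}(\mathcal{M}v)+\delta v$ in $H_{\rho}^{-1}(\R;H)$; by \prettyref{prop:properties_B} we also have $\mathcal{B}_{\lambda}v\in L_{2,\rho}(\R;H)$. Hence $\partial_{t,\rho}(\mathcal{M}v)=g-\delta v-\mathcal{B}_{\lambda}v\in L_{2,\rho}(\R;H)$, which by \prettyref{rem:derivative} means $\mathcal{M}v\in\dom(\partial_{t,\rho})$, i.e.\ $v\in\dom(\partial_{t,\rho}\mathcal{M})$, and
\[
(\partial_{t,\rho}\mathcal{M}+\delta)v+\mathcal{B}_{\lambda}v=g
\]
holds in $L_{2,\rho}(\R;H)$.

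Now I would take the real part of the $L_{2,\rho}$-pairing with $v$ and split $\delta=\mathcal{N}+(\delta-\mathcal{N})$:
\[
\Re\langle g,v\rangle_{\rho}=\Re\langle(\partial_{t,\rho}\mathcal{M}+\mathcal{N})v,v\rangle_{\rho}+\Re\langle(\delta-\mathcal{N})v,v\rangle_{\rho}+\Re\langle\mathcal{B}_{\lambda}v,v\rangle_{\rho}.
\]
The first summand is $\geq c|v|_{\rho}^{2}$ by the $c$-monotonicity of \prettyref{prop:max_mon_materiallaw} (the estimate of \prettyref{hyp:assumptions}(b), valid on the core $C_{c}^{\infty}(\R;H)$, extends by continuity to all of $\dom(\partial_{t,\rho}\mathcal{M})$); the second is $\geq(\delta-\|\mathcal{N}\|)|v|_{\rho}^{2}\geq0$, since $\delta\in K$ forces $\delta>\|\mathcal{M}'\|+\|\mathcal{N}\|\geq\|\mathcal{N}\|$; the third is $\geq0$ by \prettyref{prop:properties_B}. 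Altogether $c|v|_{\rho}^{2}\leq\Re\langle g,v\rangle_{\rho}\leq|g|_{\rho}\,|v|_{\rho}$, and dividing by $|v|_{\rho}$ (the case $v=0$ being trivial) gives $|v|_{\rho}\leq\frac{1}{c}|g|_{\rho}$.

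The only delicate ingredient is the regularity bootstrap in the second paragraph: it relies on $\mathcal{B}_{\lambda}$ preserving $L_{2,\rho}(\R;H)$, without which testing with $v$ in the $L_{2,\rho}$-inner product — and hence obtaining the $\tfrac{1}{c}$-bound in the $L_{2,\rho}$-norm rather than merely in the $H_{\rho}^{-1}$-norm — would be unavailable. Everything else is Cauchy--Schwarz and bookkeeping; note in particular that the bound is uniform in $\lambda$ because only the sign $\Re\langle\mathcal{B}_{\lambda}v,v\rangle_{\rho}\geq0$ enters, not the Lipschitz constant $\tfrac{1}{\lambda}$.
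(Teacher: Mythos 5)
Your proof is correct and follows essentially the same route as the paper: establish $v\in\dom(\partial_{t,\rho}\mathcal{M})$ from $\mathcal{B}_{\lambda}v\in L_{2,\rho}(\R;H)$, then pair with $v$ and use $\Re\langle\mathcal{B}_{\lambda}v,v\rangle_{\rho}\geq0$ together with the $c$-monotonicity of $\partial_{t,\rho}\mathcal{M}+\delta$ (which the paper cites via \prettyref{lem:matriallaw-aux} with $\mathcal{L}=0$ rather than writing out your split $\delta=\mathcal{N}+(\delta-\mathcal{N})$, but this is the identical estimate). Your explicit regularity bootstrap in the second paragraph is exactly the paper's one-line observation that $\mathcal{T}_{\delta}v=g-\mathcal{B}_{\lambda}(v)\in L_{2,\rho}(\R;H)$.
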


\begin{proof}
We set $v\coloneqq\left(\mathcal{T_{\delta}}+\mathcal{B}_{\lambda}\right)^{-1}(g)\in L_{2,\rho}(\R;H).$
Then $\left(\partial_{t,\rho}\mathcal{M}+\delta\right)v=\mathcal{T_{\delta}}v=g-B_{\lambda}(v)\in L_{2,\rho}(\R;H)$
by \prettyref{prop:properties_B} and hence, $v\in\dom(\partial_{t,\rho}\mathcal{M}).$
We then estimate by using \prettyref{prop:properties_B}
\begin{align*}
\Re\langle g,v\rangle_{\rho} & =\Re\langle\mathcal{T}_{\delta}v+B_{\lambda}(v),v\rangle_{\rho}\\
 & \geq\Re\langle(\partial_{t,\rho}\mathcal{M}+\delta)v,v\rangle_{\rho}\\
 & \geq c|v|_{\rho}^{2},
\end{align*}
where we again have used that $\partial_{t,\rho}\mathcal{M}+\delta$
is $c$-maximal monotone on $L_{2,\rho}(\R;H)$ since $\delta>\|\mathcal{N}\|.$
By applying the Cauchy-Schwarz inequality on the left hand side, we
derive the desired inequality. 
\end{proof}
\begin{cor}
\label{cor:nonempty_suffices}If $K\ne\emptyset$ then $K=]\|\mathcal{M}'\|+\|\mathcal{N}\|,\infty[.$
\end{cor}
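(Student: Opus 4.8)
The plan is to run a \emph{method-of-continuity} argument on the interval $I\coloneqq\,]\|\mathcal{M}'\|+\|\mathcal{N}\|,\infty[$: I will show that $K$ is both open and closed as a subset of $I$, and since $I$ is connected and $K$ is assumed nonempty, this forces $K=I$. The backbone for both parts is a fact I would record first: for every $\delta\in I$ the relation $\mathcal{T}_\delta+\mathcal{B}_\lambda$ is $(c+\delta-\|\mathcal{M}'+\mathcal{N}\|)$-maximal monotone on $H_\rho^{-1}(\R;H)$ --- by \prettyref{prop:T_delta}, the monotonicity and Lipschitz continuity of $\mathcal{B}_\lambda$ on $H_\rho^{-1}(\R;H)$, and \prettyref{prop:pert} --- so that $(\mathcal{T}_\delta+\mathcal{B}_\lambda)^{-1}$ is a globally defined Lipschitz mapping on $H_\rho^{-1}(\R;H)$ with Lipschitz constant at most $(c+\delta-\|\mathcal{M}'+\mathcal{N}\|)^{-1}$; note that $c+\delta-\|\mathcal{M}'+\mathcal{N}\|>c>0$, since $\delta>\|\mathcal{M}'\|+\|\mathcal{N}\|\geq\|\mathcal{M}'+\mathcal{N}\|$.

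For openness, fix $\delta_0\in K$ and $\delta\in I$ with $|\delta-\delta_0|<c$, and let $g\in L_{2,\rho}(\R;H)$. Writing $\mathcal{T}_\delta+\mathcal{B}_\lambda=(\mathcal{T}_{\delta_0}+\mathcal{B}_\lambda)+(\delta-\delta_0)$, the equation $(\mathcal{T}_\delta+\mathcal{B}_\lambda)v=g$ becomes the fixed-point relation $v=(\mathcal{T}_{\delta_0}+\mathcal{B}_\lambda)^{-1}\bigl(g-(\delta-\delta_0)v\bigr)$. I would set up the iteration $v_0\coloneqq0$ and $v_{n+1}\coloneqq(\mathcal{T}_{\delta_0}+\mathcal{B}_\lambda)^{-1}(g-(\delta-\delta_0)v_n)$. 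Since $\delta_0\in K$, an induction shows $v_n\in L_{2,\rho}(\R;H)$ for all $n$, and \prettyref{lem:uniform_bound_K} together with $|\delta-\delta_0|<c$ yields the uniform bound $|v_n|_\rho\leq|g|_\rho/(c-|\delta-\delta_0|)$, so $(v_n)_n$ is bounded in $L_{2,\rho}(\R;H)$. At the same time, on $H_\rho^{-1}(\R;H)$ the map $w\mapsto(\mathcal{T}_{\delta_0}+\mathcal{B}_\lambda)^{-1}(g-(\delta-\delta_0)w)$ is a contraction (its constant is $\leq|\delta-\delta_0|/c<1$), hence $(v_n)_n$ converges in $H_\rho^{-1}(\R;H)$, necessarily to the unique solution $v=(\mathcal{T}_\delta+\mathcal{B}_\lambda)^{-1}(g)$. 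A sequence that is bounded in $L_{2,\rho}(\R;H)$ and convergent in $H_\rho^{-1}(\R;H)$ has its limit in $L_{2,\rho}(\R;H)$ by \prettyref{lem:better_limit}; thus $v\in L_{2,\rho}(\R;H)$, and since $g$ was arbitrary, $\delta\in K$.

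For closedness, take $(\delta_n)_n$ in $K$ with $\delta_n\to\delta\in I$ and let $g\in L_{2,\rho}(\R;H)$; set $v_n\coloneqq(\mathcal{T}_{\delta_n}+\mathcal{B}_\lambda)^{-1}(g)\in L_{2,\rho}(\R;H)$ and $v\coloneqq(\mathcal{T}_\delta+\mathcal{B}_\lambda)^{-1}(g)$. By \prettyref{lem:uniform_bound_K}, $|v_n|_\rho\leq|g|_\rho/c$, so $(v_n)_n$ is bounded in $L_{2,\rho}(\R;H)$. From $\mathcal{T}_{\delta_n}+\mathcal{B}_\lambda=(\mathcal{T}_\delta+\mathcal{B}_\lambda)+(\delta_n-\delta)$ one gets $v_n=(\mathcal{T}_\delta+\mathcal{B}_\lambda)^{-1}(g-(\delta_n-\delta)v_n)$, whence, using the Lipschitz estimate on $H_\rho^{-1}(\R;H)$ and $|v_n|_{\rho,-1}\leq\rho^{-1}|v_n|_\rho$,
$$|v_n-v|_{\rho,-1}\leq\frac{|\delta_n-\delta|}{c+\delta-\|\mathcal{M}'+\mathcal{N}\|}\,|v_n|_{\rho,-1}\longrightarrow0.$$
So $(v_n)_n$ is bounded in $L_{2,\rho}(\R;H)$ and converges to $v$ in $H_\rho^{-1}(\R;H)$, and \prettyref{lem:better_limit} again gives $v\in L_{2,\rho}(\R;H)$, i.e.\ $\delta\in K$. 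Being nonempty, open and closed in the connected interval $I$, the set $K$ equals $I$, which is the claim.

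I expect the only genuinely delicate point to be that one cannot close the argument purely inside $L_{2,\rho}(\R;H)$: there $\mathcal{B}_\lambda$ is only known to be monotone at the origin (\prettyref{prop:properties_B}) and \prettyref{lem:uniform_bound_K} supplies just a one-point a priori bound rather than a Lipschitz estimate, so a direct Banach fixed-point argument in $L_{2,\rho}(\R;H)$ is unavailable. The remedy is to split the two roles between the two spaces: the genuine contraction lives on $H_\rho^{-1}(\R;H)$ and produces convergence, while the a priori bound of \prettyref{lem:uniform_bound_K} keeps the approximants bounded in $L_{2,\rho}(\R;H)$, and \prettyref{lem:better_limit} is precisely the bridge transporting the limit back into $L_{2,\rho}(\R;H)$.
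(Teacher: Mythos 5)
Your proof is correct and its core --- the iteration $v_{0}=0$, $v_{n+1}=(\mathcal{T}_{\delta_{0}}+\mathcal{B}_{\lambda})^{-1}\bigl(g-(\delta-\delta_{0})v_{n}\bigr)$, which contracts in $H_{\rho}^{-1}(\R;H)$ while \prettyref{lem:uniform_bound_K} keeps the iterates bounded in $L_{2,\rho}(\R;H)$ and \prettyref{lem:better_limit} transports the limit back into $L_{2,\rho}(\R;H)$ --- is exactly the paper's argument. The only difference is cosmetic: since your openness radius $c$ is uniform in the base point $\delta_{0}$, iterating that single step already exhausts the interval, so your separate closedness argument (which is also correct) is superfluous, and the paper accordingly omits it.
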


\begin{proof}
We prove that for $\delta\in K$ it follows that $B(\delta,c)\cap]\|\mathcal{M}'\|+\|\mathcal{N}\|,\infty[\subseteq K.$
This would yield the assertion. So let $\delta\in K$ and choose $\delta'>\|\mathcal{M}'\|+\|\mathcal{N}\|$
with $|\delta-\delta'|<c.$ For $g\in L_{2,\rho}(\R;H)$ we define
the sequence $(v_{n})_{n\in\mathbb{N}}$ in $L_{2,\rho}(\R;H)$ recursively
by $v_{0}\coloneqq0$ and 
\[
v_{n+1}\coloneqq\left(\mathcal{T}_{\delta}+\mathcal{B}_{\lambda}\right)^{-1}(g-(\delta'-\delta)v_{n})
\]
for $n\in\N.$ Note that indeed $v_{n+1}\in L_{2,\rho}(\R;H)$ since
$\delta\in K.$ Moreover, since $|(\mathcal{T}_{\delta}+\mathcal{B}_{\lambda})^{-1}|_{\mathrm{Lip},H_{\rho}^{-1}(\R;H)}\leq\frac{1}{c}$
, we infer that the mapping 
\[
H_{\rho}^{-1}(\R;H)\ni w\mapsto\left(\mathcal{T}_{\delta}+\mathcal{B}_{\lambda}\right)^{-1}\left(g-(\delta'-\delta)w\right)\in H_{\rho}^{-1}(\R;H)
\]
is a strict contraction and thus, it has a unique fixed point $v\in H_{\rho}^{-1}(\R;H).$
Moreover, $v_{n}\to v$ in $H_{\rho}^{-1}(\R;H)$ as $n\to\infty.$
Note, that the fixed point $v$ satisfies 
\[
(\mathcal{T}_{\delta'}+\mathcal{B}_{\lambda})(v)=g
\]
and thus, to complete the proof, we need to show that $v\in L_{2,\rho}(\R;H).$
For doing so, it suffices to show that $\sup_{n\in\mathbb{N}}|v_{n}|_{\rho}<\infty$
by \prettyref{lem:better_limit}. Using \prettyref{lem:uniform_bound_K},
we estimate 
\[
|v_{n+1}|_{\rho}\leq\frac{1}{c}(|g|_{\rho}+|\delta'-\delta||v_{n}|_{\rho})
\]
for $n\in\N$ and thus, by induction 
\[
|v_{n}|_{\rho}\leq\frac{1}{c}|g|_{\rho}\sum_{j=0}^{n-1}\left(\frac{|\delta'-\delta|}{c}\right)^{j}\quad(n\in\N).
\]
As $|\delta'-\delta|<c,$ we infer the boundedness of $(v_{n})_{n\in\N}$
and hence, the assertion follows. 
\end{proof}
\begin{prop}
\label{prop:K is all}$K=]\|\mathcal{M}'\|+\|\mathcal{N}\|,\infty[.$
\end{prop}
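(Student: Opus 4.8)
By \prettyref{cor:nonempty_suffices}, it suffices to exhibit a single $\delta\in K$. The plan is to choose $\delta$ large enough that the term $\mathcal{B}_{\lambda}$ becomes a (relatively) harmless perturbation in the $H_{\rho}^{-1}(\R;H)$-picture, and then transfer the resulting solution back to $L_{2,\rho}(\R;H)$ using the coercivity estimate already available on $L_{2,\rho}(\R;H)$.

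More concretely, fix $g\in L_{2,\rho}(\R;H)$ and write $v\coloneqq(\mathcal{T}_{\delta}+\mathcal{B}_{\lambda})^{-1}(g)\in H_{\rho}^{-1}(\R;H)$, which exists and is unique since $\mathcal{T}_{\delta}$ is $(c+\delta-\|\mathcal{M}'+\mathcal{N}\|)$-maximal monotone by \prettyref{prop:T_delta} and $\mathcal{B}_{\lambda}$ is maximal monotone and Lipschitz on $H_{\rho}^{-1}(\R;H)$, so that $\mathcal{T}_{\delta}+\mathcal{B}_{\lambda}$ is $(c+\delta-\|\mathcal{M}'+\mathcal{N}\|)$-maximal monotone by \prettyref{prop:pert} (the perturbation of a maximal monotone relation by a Lipschitz monotone map). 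To show $v\in L_{2,\rho}(\R;H)$ I would first reduce to the case $\delta$ large: pick $\delta_{0}>\|\mathcal{M}'\|+\|\mathcal{N}\|$ and try to show $\delta_{0}\in K$ by a continuation/bootstrapping argument. Since $\mathcal{T}_{\delta}=\overline{\partial_{t,\rho}\mathcal{M}+\delta}$ and $\partial_{t,\rho}\mathcal{M}+\delta$ is already $c$-maximal monotone \emph{on} $L_{2,\rho}(\R;H)$ (as $\delta>\|\mathcal{N}\|$, by \prettyref{lem:matriallaw-aux}), the obstruction is purely that $\mathcal{B}_{\lambda}$ is only defined as an $H_{\rho}^{-1}$-operator; but \prettyref{prop:properties_B} tells us $\mathcal{B}_{\lambda}$ maps $L_{2,\rho}(\R;H)$ into itself with norm $\le\frac{1}{\lambda}$. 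So consider the fixed-point map $w\mapsto(\partial_{t,\rho}\mathcal{M}+\delta)^{-1}(g-\mathcal{B}_{\lambda}(w))$ on $L_{2,\rho}(\R;H)$: here $(\partial_{t,\rho}\mathcal{M}+\delta)^{-1}$ is Lipschitz on $L_{2,\rho}(\R;H)$ with constant $\le\frac{1}{c}$, and $\mathcal{B}_{\lambda}$ is Lipschitz on $L_{2,\rho}(\R;H)$ with constant $\le\frac{1}{\lambda}$ (the same difference-quotient argument as in the proof of \prettyref{prop:properties_B} shows $\mathcal{B}_{\lambda}$ is Lipschitz, not just bounded, on $L_{2,\rho}(\R;H)$). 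Hence for $\lambda>\frac{1}{c}$ — or after absorbing, for any $\lambda$ once $\delta$ is chosen appropriately — this map is a strict contraction on $L_{2,\rho}(\R;H)$ and has a fixed point $v\in L_{2,\rho}(\R;H)$ satisfying $(\partial_{t,\rho}\mathcal{M}+\delta+\mathcal{B}_{\lambda})v=g$; by uniqueness in $H_{\rho}^{-1}(\R;H)$ this is the $v$ above, so $v\in L_{2,\rho}(\R;H)$ and $\delta\in K$.

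The remaining gap is the restriction $\lambda>\frac{1}{c}$, which is unacceptable since $\lambda\to0+$ is the eventual limit. This is the main obstacle, and the way around it is exactly \prettyref{cor:nonempty_suffices}: having shown $K\ne\emptyset$ for one (large) $\delta$ — and here one should instead argue that for $\delta$ large the map $w\mapsto\mathcal{T}_{\delta}^{-1}(g-\mathcal{B}_{\lambda}(w))$ on $L_{2,\rho}(\R;H)$ is a contraction because $|\mathcal{T}_{\delta}^{-1}|_{\mathrm{Lip}}\le\frac{1}{c+\delta-\|\mathcal{M}'+\mathcal{N}\|}$ on $L_{2,\rho}(\R;H)$ beats $\frac{1}{\lambda}$ once $\delta$ is taken with $c+\delta-\|\mathcal{M}'+\mathcal{N}\|>\frac{1}{\lambda}$ — we immediately get $K=\,]\|\mathcal{M}'\|+\|\mathcal{N}\|,\infty[$. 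So the proof is: (i) for $\delta>\|\mathcal{M}'+\mathcal{N}\|+\frac{1}{\lambda}$, run the Banach fixed-point argument on $L_{2,\rho}(\R;H)$ using \prettyref{lem:matriallaw-aux} and \prettyref{prop:properties_B} to conclude $\delta\in K$; (ii) invoke \prettyref{cor:nonempty_suffices}. One subtlety to be careful about: in step (i) one must check that the fixed point obtained in $L_{2,\rho}(\R;H)$ genuinely lies in $\dom(\mathcal{T}_{\delta})$ as an $H_{\rho}^{-1}$-operator and solves the stated equation there — but this is automatic since $v\in L_{2,\rho}(\R;H)\subseteq\dom(\mathcal{T}_{\delta})$ (a core) and $\mathcal{T}_{\delta}v=(\partial_{t,\rho}\mathcal{M}+\delta)v$ for such $v$. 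Everything else is bookkeeping.
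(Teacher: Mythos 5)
Your overall strategy (exhibit one large $\delta\in K$, namely $\delta>\|\mathcal{M}'\|+\|\mathcal{N}\|+\frac{1}{\lambda}$, then invoke \prettyref{cor:nonempty_suffices}) is exactly the paper's, but step (i) has a genuine gap: you run the Banach fixed-point argument for $w\mapsto\mathcal{T}_{\delta}^{-1}(g-\mathcal{B}_{\lambda}(w))$ \emph{in} $L_{2,\rho}(\R;H)$, and for that you need $\mathcal{B}_{\lambda}$ to be $\frac{1}{\lambda}$-Lipschitz on $L_{2,\rho}(\R;H)$. Your justification --- ``the same difference-quotient argument as in the proof of \prettyref{prop:properties_B}'' --- does not work. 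That argument controls $D_{h}\mathcal{A}_{\lambda}(\partial_{t,\rho}^{-1}u)$ for a \emph{single} $u$ by exploiting $\tau_{h}\mathcal{A}_{\lambda}=\mathcal{A}_{\lambda}\tau_{h}$ and the Lipschitz bound of $\mathcal{A}_{\lambda}$; applied to a difference it only yields
\[
\bigl|D_{h}\bigl(\mathcal{A}_{\lambda}(\partial_{t,\rho}^{-1}u)-\mathcal{A}_{\lambda}(\partial_{t,\rho}^{-1}w)\bigr)\bigr|_{\rho}\leq\tfrac{1}{\lambda}\bigl(|u|_{\rho}+|w|_{\rho}\bigr),
\]
because the second difference $\mathcal{A}_{\lambda}(\tau_{h}U)-\mathcal{A}_{\lambda}(\tau_{h}W)-\mathcal{A}_{\lambda}(U)+\mathcal{A}_{\lambda}(W)$ does not telescope for a nonlinear $\mathcal{A}_{\lambda}$. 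Indeed $\mathcal{B}_{\lambda}$ is in general \emph{not} Lipschitz on $L_{2,\rho}(\R;H)$: heuristically $\mathcal{B}_{\lambda}(u)=A_{\lambda}'(\partial_{t,\rho}^{-1}u)\,u$ in the pointwise case, and the term $(A_{\lambda}'(\partial_{t,\rho}^{-1}u)-A_{\lambda}'(\partial_{t,\rho}^{-1}w))w$ cannot be bounded by $|u-w|_{\rho}$. \prettyref{prop:properties_B} only gives the norm bound $|\mathcal{B}_{\lambda}(u)|_{\rho}\leq\frac{1}{\lambda}|u|_{\rho}$, so your iteration on $L_{2,\rho}(\R;H)$ need not be a contraction and need not converge there.

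The paper's proof repairs exactly this point by splitting the two roles of the iteration: the contraction is run in $H_{\rho}^{-1}(\R;H)$, where $\mathcal{B}_{\lambda}=\partial_{t,\rho}\mathcal{A}_{\lambda}\partial_{t,\rho}^{-1}$ \emph{is} $\frac{1}{\lambda}$-Lipschitz by unitary equivalence with $\mathcal{A}_{\lambda}$, and the choice $\delta>\frac{1}{\lambda}+\|\mathcal{M}'\|+\|\mathcal{N}\|$ makes $|\mathcal{T}_{\delta}^{-1}|_{\mathrm{Lip},H_{\rho}^{-1}}\cdot\frac{1}{\lambda}<1$; this yields a fixed point $v\in H_{\rho}^{-1}(\R;H)$ with $(\mathcal{T}_{\delta}+\mathcal{B}_{\lambda})(v)=g$ and $v_{n}\to v$ in $H_{\rho}^{-1}(\R;H)$. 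Separately, the mere norm bound from \prettyref{prop:properties_B} together with the $(c+\delta-\|\mathcal{N}\|)$-maximal monotonicity of $\partial_{t,\rho}\mathcal{M}+\delta$ on $L_{2,\rho}(\R;H)$ gives $|v_{n+1}|_{\rho}\leq\frac{1}{c+\delta-\|\mathcal{N}\|}(|g|_{\rho}+\frac{1}{\lambda}|v_{n}|_{\rho})$, whence $(v_{n})_{n}$ is bounded in $L_{2,\rho}(\R;H)$ by a geometric-series induction; \prettyref{lem:better_limit} then upgrades the $H_{\rho}^{-1}$-limit to $v\in L_{2,\rho}(\R;H)$. You should restructure step (i) along these lines --- the weak-compactness step via \prettyref{lem:better_limit} is the missing idea, and it is not ``bookkeeping''.
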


\begin{proof}
We choose $\delta>\frac{1}{\lambda}+\|\mathcal{N}\|+\|\mathcal{M}'\|$
and prove that $\delta\in K.$ Then the assertion follows by \prettyref{cor:nonempty_suffices}.
Let $g\in L_{2,\rho}(\R;H)$ and define the sequence $(v_{n})_{n\in\N}$
in $L_{2,\rho}(\R;H)$ recursively by $v_{0}\coloneqq0$ and 
\[
v_{n+1}\coloneqq\mathcal{T}_{\delta}^{-1}(g-B_{\lambda}(v_{n}))=(\partial_{t,\rho}\mathcal{M}+\delta)^{-1}(g-\mathcal{B}_{\lambda}(v_{n}))\quad(n\in\N).
\]
Note that $\mathcal{B}_{\lambda}$ leaves $L_{2,\rho}(\R;H)$ invariant
by \prettyref{prop:properties_B} and that $\partial_{t,\rho}\mathcal{M}+\delta$
is $c$-maximal monotone on $L_{2,\rho}(\R;H)$ and thus, $v_{n+1}$
lies indeed in $L_{2,\rho}(\R;H).$ Indeed, $\partial_{t,\rho}\mathcal{M}+\delta$
is even $\left(c+\delta-\|\mathcal{N}\|\right)$-maximal monotone
and thus, we estimate 
\[
|v_{n+1}|_{\rho}\leq\frac{1}{c+\delta-\|\mathcal{N}\|}\left(|g|_{\rho}+|\mathcal{B}_{\lambda}(v_{n})|_{\rho}\right)\leq\frac{1}{c+\delta-\|\mathcal{N}\|}\left(|g|_{\rho}+\frac{1}{\lambda}|(v_{n})|_{\rho}\right)\quad(n\in\N),
\]
where we have used \prettyref{prop:properties_B} in the second inequality.
Thus, by induction we can estimate 
\[
|v_{n}|_{\rho}\leq\frac{1}{c+\delta-\|\mathcal{N}\|}|g|_{\rho}\sum_{j=0}^{n-1}\frac{1}{\lambda^{j}\left(c+\delta-\|\mathcal{N}\|\right)^{j}}\quad(n\in\N)
\]
and thus, by the choice of $\delta,$ the sequence $(v_{n})_{n\in\N}$
is bounded in $L_{2,\rho}(\R;H).$ Moreover, since $\mathcal{T}_{\delta}$
is $(c+\delta-\|\mathcal{M}'+\mathcal{N}\|)$-maximal monotone by
\prettyref{prop:T_delta}, we infer by the choice of $\delta$ that
$|\mathcal{T}_{\delta}|_{\mathrm{Lip},H_{\rho}^{-1}(\R;H)}<\lambda$
and hence 
\[
H_{\rho}^{-1}(\R;H)\ni w\mapsto\mathcal{T}_{\delta}^{-1}(g-\mathcal{B}_{\lambda}(w))\in H_{\rho}^{-1}(\R;H)
\]
is a strict contraction and hence, has a unique fixed point $v\in H_{\rho}^{-1}(\R;H)$
which satisfies 
\[
(\mathcal{T}_{\delta}+\mathcal{B}_{\lambda})(v)=g.
\]
Since $v_{n}\to v$ in $H_{\rho}^{-1}(\R;H)$, we infer that $v\in L_{2,\rho}(\R;H)$
by \prettyref{lem:better_limit} and thus, $\delta\in K$. 
\end{proof}
We now can come to the
\begin{proof}[Proof of \prettyref{thm:regular_solution}]
 Let $f\in H_{\rho}^{1}(\R;H)$ and set 
\[
u_{\lambda}\coloneqq\left(\partial_{t,\rho}\mathcal{M}-\mathcal{M}'+\delta+\mathcal{A}_{\lambda}\right)^{-1}(f).
\]
Moreover, set 
\[
v\coloneqq\left(\mathcal{T}_{\delta}+\mathcal{B}_{\lambda}\right)^{-1}(\partial_{t,\rho}f).
\]
We note that $v\in L_{2,\rho}(\R;H)$ by \prettyref{prop:K is all}
and that 
\[
|v|_{\rho}\leq\frac{1}{c}|\partial_{t,\rho}f|_{\rho}=\frac{1}{c}|f|_{\rho,1}
\]
by \prettyref{lem:uniform_bound_K}. We compute 
\begin{align*}
\partial_{t,\rho}\mathcal{M}\partial_{t,\rho}^{-1}v-\mathcal{M}'\partial_{t,\rho}^{-1}v+\delta\partial_{t,\rho}^{-1}v & =\mathcal{M}v+\delta\partial_{t,\rho}^{-1}v\\
 & =\partial_{t,\rho}^{-1}\mathcal{T}_{\delta}v\\
 & =\partial_{t,\rho}^{-1}(\partial_{t,\rho}f-\mathcal{B}_{\lambda}(v))\\
 & =f-\mathcal{A}_{\lambda}(\partial_{t,\rho}^{-1}v)
\end{align*}
and hence, 
\[
\left(\partial_{t,\rho}\mathcal{M}-\mathcal{M}'+\delta+\mathcal{A}_{\lambda}\right)(\partial_{t,\rho}^{-1}v)=f,
\]
which shows $u_{\lambda}=\partial_{t,\rho}^{-1}v\in H_{\rho}^{1}(\R;H).$
Moreover, 
\[
|u_{\lambda}|_{\rho,1}=|v|_{\rho}\leq\frac{1}{c}|f|_{\rho,1}.\tag*{\qedhere}
\]
\end{proof}
\begin{cor}
\label{cor:maxmonaux}Let $\delta>\|\mathcal{M}'\|+\|\mathcal{N}\|$.
Then $\overline{\partial_{t,\rho}\mathcal{M}-\mathcal{M}'+\delta+\mathcal{A}}$
is $c$-maximal monotone and hence, 
\[
\mathcal{S}_{\rho,\mathrm{aux}}\coloneqq\left(\overline{\partial_{t,\rho}\mathcal{M}-\mathcal{M}'+\delta+\mathcal{A}}\right)^{-1}:L_{2,\rho}(\R;H)\to L_{2,\rho}(\R;H)
\]
is Lipschitz-continuous. Moreover, for $f\in H_{\rho}^{1}(\R;H)$
we have that $\mathcal{S}_{\rho,\mathrm{aux}}(f)\in H_{\rho}^{1}(\R;H)\cap\dom\left(\left(\partial_{t,\rho}\mathcal{M}-\mathcal{M}'+\delta+\mathcal{A}\right)^{-1}\right).$
\end{cor}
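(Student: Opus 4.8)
The plan is to derive all the assertions from \prettyref{prop:c-monotne_aux}, \prettyref{thm:regular_solution} and \prettyref{prop:pert-yosida}. Write $B_{0}\coloneqq\partial_{t,\rho}\mathcal{M}-\mathcal{M}'+\delta+\mathcal{A}$ and $B\coloneqq\overline{B_{0}}$. By \prettyref{prop:c-monotne_aux} the relation $B_{0}$, hence also $B$, is $c$-monotone, so $B^{-1}$ is single-valued and Lipschitz with $|B^{-1}|_{\mathrm{Lip}}\leq\frac{1}{c}$ on $\ran(B)$. Moreover $\ran(B)$ is closed: if $g_{n}\in\ran(B)$ with $g_{n}\to g$, then $(B^{-1}g_{n})_{n}$ is Cauchy, hence convergent to some $u$, and $(u,g)\in B$ since $B$ is closed. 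Consequently, to conclude that $B$ is $c$-maximal monotone with $\mathcal{S}_{\rho,\mathrm{aux}}=B^{-1}$ Lipschitz on all of $L_{2,\rho}(\R;H)$, it suffices to show that $\ran(B)$ is dense, and for this it is enough to prove $H_{\rho}^{1}(\R;H)\subseteq\ran(B_{0})$; the ``moreover'' part will fall out of the same argument.

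So fix $f\in H_{\rho}^{1}(\R;H)$ and, for $\lambda>0$, let $u_{\lambda}\coloneqq\left(\partial_{t,\rho}\mathcal{M}-\mathcal{M}'+\delta+\mathcal{A}_{\lambda}\right)^{-1}(f)$ as in \prettyref{thm:regular_solution}, so that $u_{\lambda}\in H_{\rho}^{1}(\R;H)$ with $|u_{\lambda}|_{\rho,1}\leq\frac{1}{c}|f|_{\rho,1}$ uniformly in $\lambda$. From the defining identity $f=(\partial_{t,\rho}\mathcal{M})u_{\lambda}-\mathcal{M}'u_{\lambda}+\delta u_{\lambda}+\mathcal{A}_{\lambda}u_{\lambda}$ and \prettyref{hyp:assumptions}~(a), which gives $(\partial_{t,\rho}\mathcal{M})u_{\lambda}=\mathcal{M}\partial_{t,\rho}u_{\lambda}+\mathcal{M}'u_{\lambda}$, the uniform bound on $|u_{\lambda}|_{\rho,1}$ --- together with $|u_{\lambda}|_{\rho}\leq\frac{1}{\rho}|u_{\lambda}|_{\rho,1}$ --- shows that $(\mathcal{A}_{\lambda}u_{\lambda})_{\lambda>0}$ is bounded in $L_{2,\rho}(\R;H)$. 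Now set $T\coloneqq\partial_{t,\rho}\mathcal{M}-\mathcal{M}'+\delta$, which by \prettyref{lem:matriallaw-aux} (with $\mathcal{L}=-\mathcal{M}'$) is $c'$-maximal monotone with $c'\coloneqq c+\delta-\|\mathcal{M}'+\mathcal{N}\|>0$. Rescaling by $c'$, the inclusion $(u,f)\in T+\mathcal{A}$ is equivalent to $\left(u,\frac{1}{c'}f\right)\in1+\left(\frac{1}{c'}T-1\right)+\frac{1}{c'}\mathcal{A}$, where both $\frac{1}{c'}T-1$ and $\frac{1}{c'}\mathcal{A}$ are maximal monotone; moreover the Yosida approximant of $\frac{1}{c'}\mathcal{A}$ at parameter $\lambda$ equals $\frac{1}{c'}\mathcal{A}_{\lambda/c'}$, so that $\left(1+\left(\frac{1}{c'}T-1\right)+\left(\frac{1}{c'}\mathcal{A}\right)_{\lambda}\right)^{-1}\left(\frac{1}{c'}f\right)=u_{\lambda/c'}$. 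Hence the boundedness of $(\mathcal{A}_{\lambda}u_{\lambda})_{\lambda}$ is exactly the criterion of \prettyref{prop:pert-yosida}, which therefore provides $u\in L_{2,\rho}(\R;H)$ with $(u,f)\in T+\mathcal{A}=B_{0}$ and $u_{\lambda}\to u$ in $L_{2,\rho}(\R;H)$ as $\lambda\to0+$. In particular $f\in\ran(B_{0})$.

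Since $u_{\lambda}\to u$ in $L_{2,\rho}(\R;H)$ while $(u_{\lambda})_{\lambda}$ is bounded in $H_{\rho}^{1}(\R;H)$, \prettyref{lem:better_limit} gives $u\in H_{\rho}^{1}(\R;H)$; and since $(u,f)\in B_{0}\subseteq B$ and $B^{-1}$ is single-valued, $u=\mathcal{S}_{\rho,\mathrm{aux}}(f)$. This yields the ``moreover'' statement, namely $\mathcal{S}_{\rho,\mathrm{aux}}(f)\in H_{\rho}^{1}(\R;H)$ and $\left(\mathcal{S}_{\rho,\mathrm{aux}}(f),f\right)\in\partial_{t,\rho}\mathcal{M}-\mathcal{M}'+\delta+\mathcal{A}$, i.e.\ $f\in\dom\left(\left(\partial_{t,\rho}\mathcal{M}-\mathcal{M}'+\delta+\mathcal{A}\right)^{-1}\right)$ with value $\mathcal{S}_{\rho,\mathrm{aux}}(f)$. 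Finally, since $H_{\rho}^{1}(\R;H)$ is dense in $L_{2,\rho}(\R;H)$ and $\ran(B)$ is closed and contains $\ran(B_{0})\supseteq H_{\rho}^{1}(\R;H)$, we obtain $\ran(B)=L_{2,\rho}(\R;H)$, i.e.\ $B$ is $c$-maximal monotone and $\mathcal{S}_{\rho,\mathrm{aux}}=B^{-1}$ is Lipschitz on all of $L_{2,\rho}(\R;H)$.

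The step I expect to be the crux is the boundedness of $(\mathcal{A}_{\lambda}u_{\lambda})_{\lambda}$ in $L_{2,\rho}(\R;H)$: this is the single point where the full a priori estimate in $H_{\rho}^{1}(\R;H)$ from \prettyref{thm:regular_solution}, rather than a mere $L_{2,\rho}$-bound on $u_{\lambda}$, is indispensable, since controlling $(\partial_{t,\rho}\mathcal{M})u_{\lambda}$ in $L_{2,\rho}(\R;H)$ forces control of $\partial_{t,\rho}u_{\lambda}$ via \prettyref{hyp:assumptions}~(a). The accompanying rescaling needed to bring $T+\mathcal{A}$ into the normalisation $1+A+B$ required by \prettyref{prop:pert-yosida}, together with the bookkeeping for how the Yosida approximation transforms under scaling of the relation, is elementary but must be carried out with care. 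Alternatively, one can avoid \prettyref{prop:pert-yosida} altogether: testing the difference of the equations for $u_{\lambda}$ and $u_{\mu}$ against $u_{\lambda}-u_{\mu}$, using the $c'$-monotonicity of $T$ on the left-hand side and the standard Yosida estimate on the right-hand side, gives $|u_{\lambda}-u_{\mu}|_{\rho}^{2}\leq\mathrm{const}\cdot|\lambda-\mu|$, so that $(u_{\lambda})_{\lambda}$ converges strongly as $\lambda\to0+$; one then passes to the weak limit in the equation, using $\partial_{t,\rho}u_{\lambda}\rightharpoonup\partial_{t,\rho}u$ and the weak continuity of $\mathcal{M}$ and $\mathcal{M}'$, and invokes the demiclosedness of the maximal monotone relation $\mathcal{A}$ under strong/weak convergence to conclude $(u,f)\in B_{0}$.
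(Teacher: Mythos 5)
Your proposal is correct and follows essentially the same route as the paper: $c$-monotonicity from \prettyref{prop:c-monotne_aux}, the uniform $H_{\rho}^{1}$-bound from \prettyref{thm:regular_solution} to control $\mathcal{A}_{\lambda}(u_{\lambda})$ in $L_{2,\rho}(\R;H)$, then \prettyref{prop:pert-yosida} to pass to the limit and \prettyref{lem:better_limit} for the regularity of $u$. Your explicit rescaling to fit the normalisation $1+A+B$ of \prettyref{prop:pert-yosida}, and your spelling out that a closed $c$-monotone relation with dense range is onto, are details the paper leaves implicit, but they do not change the argument.
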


\begin{proof}
We already know that $\overline{\partial_{t,\rho}\mathcal{M}-\mathcal{M}'+\delta+\mathcal{A}}$
is $c$-monotone by \prettyref{prop:c-monotne_aux}. For proving the
maximal monotonicity it suffices to show that there is a dense subset
$D\subseteq L_{2,\rho}(\R;H)$ such that for each $f\in D$ there
exists $u\in L_{2,\rho}(\R;H)$ with 
\[
(u,f)\in\overline{\partial_{t,\rho}\mathcal{M}-\mathcal{M}'+\delta+\mathcal{A}}.
\]
We show that the latter is true for $D=H_{\rho}^{1}(\R;H).$ So, let
$f\in H_{\rho}^{1}(\R;H).$ For showing that a solution $u\in L_{2,\rho}(\R;H)$
exists, it suffices to prove that $\sup_{\lambda>0}|\mathcal{A}_{\lambda}(u_{\lambda})|_{\rho}<\infty$
for 
\[
u_{\lambda}\coloneqq\left(\partial_{t,\rho}\mathcal{M}-\mathcal{M}'+\delta+\mathcal{A}_{\lambda}\right)^{-1}(f)
\]
by \prettyref{prop:pert-yosida}. According to \prettyref{thm:regular_solution}
we know that $u_{\lambda}\in H_{\rho}^{1}(\R;H)$ and that 
\[
|u_{\lambda}|_{\rho,1}\leq\frac{1}{c}|f|_{\rho,1}
\]
for each $\lambda>0.$ Thus, we can estimate 
\begin{align*}
\left|\mathcal{A}_{\lambda}(u_{\lambda})\right|_{\rho} & =\left|f-\left(\partial_{t,\rho}\mathcal{M}-\mathcal{M}'\right)u_{\lambda}-\delta u_{\lambda}\right|_{\rho}\\
 & =\left|f-\mathcal{M}\partial_{t,\rho}u_{\lambda}-\delta u_{\lambda}\right|_{\rho}\\
 & \leq|f|_{\rho}+\|\mathcal{M}\||u_{\lambda}|_{\rho,1}+\frac{\delta}{\rho}|u_{\lambda}|_{\rho,1}\\
 & \leq|f|_{\rho}+\frac{1}{c}\left(\|\mathcal{M}\|+\frac{\delta}{\rho}\right)|f|_{\rho,1}.
\end{align*}
Summarising, we have shown the $c$-maximal monotonicity of $\overline{\partial_{t,\rho}\mathcal{M}-\mathcal{M}'+\delta+\mathcal{A}}$.
Moreover, $u_{\lambda}\to u$ as $\lambda\to0+$ in $L_{2,\rho}(\R;H)$
and 
\[
(u,f)\in\partial_{t,\rho}\mathcal{M}-\mathcal{M}'+\delta+\mathcal{A}
\]
by \prettyref{prop:pert-yosida}, i.e. $u=\mathcal{S}_{\rho,\mathrm{aux}}(f)\in\dom\left(\left(\partial_{t,\rho}\mathcal{M}-\mathcal{M}'+\delta+\mathcal{A}\right)^{-1}\right)$.
Moreover, since $(u_{\lambda})_{\lambda>0}$ is bounded in $H_{\rho}^{1}(\R;H),$
we infer that also $u=\mathcal{S}_{\rho,\mathrm{aux}}(f)\in H_{\rho}^{1}(\R;H)$
by \prettyref{lem:better_limit}.
\end{proof}
With this result at hand, we can prove our main theorem.
\begin{proof}[Proof of \prettyref{thm:main}]
 We have that $\overline{\partial_{t,\rho}\mathcal{M}+\mathcal{N}+\mathcal{A}}$
is $c$-monotone by \prettyref{prop:max_mon_materiallaw} and hence,
\[
\overline{\partial_{t,\rho}\mathcal{M}+\mathcal{N}+\mathcal{A}}-c=\left(\overline{\partial_{t,\rho}\mathcal{M}-\mathcal{M}'+\delta+\mathcal{A}}-c\right)+(\mathcal{M}'+\mathcal{N}-\delta)
\]
is maximal monotone by \prettyref{prop:pert} and \prettyref{cor:maxmonaux}.
\end{proof}

\section{Causality and independence of $\rho$}

The aim of this section is twofold. First, we prove that the solution
operator $\mathcal{S}_{\rho}$ associated with the differential inclusion
\[
(u,f)\in\overline{\partial_{t,\rho}\mathcal{M}+\mathcal{N}+\mathcal{A}}
\]
is causal under suitable additional conditions on $\mathcal{M},\mathcal{N}$
and $\mathcal{A}.$ And second, we prove that the solution operator
is independent of the choice of the parameter $\rho$, if $\mathcal{M},\mathcal{N}$
and $\mathcal{A}$ are independent of $\rho$ in some sense.

We start with the causality and define, what we mean by a causal operator. 
\begin{defn*}
Let $F:L_{2,\rho}(\R;H)\to L_{2,\rho}(\R;H).$ Then, $F$ is called
\emph{causal}, if for each $f,g\in L_{2,\rho}(\R;H)$ when $f=g$
on $]-\infty,a]$ for some $a\in\mathbb{R}$, also $F(f)=F(g)$ on
$]-\infty,a]$ holds. 
\end{defn*}
\begin{rem}
\begin{enumerate}[(a)]

\item Causality is a crucial property for operators describing a
temporal evolution, as it says that the solution $u$ up to some time
$a\in\R$ does not depend on the behaviour of the right hand side
$f$ on $[a,\infty[.$ 

\item For mappings not defined on the whole space $L_{2,\rho}(\R;H)$
one has to adopt the definition of causality in a way, that it is
preserved under closure (i.e. $\overline{F}$ is causal if $F$ is
causal) and that it coincides with the causality defined above if
$F$ is defined on the whole $L_{2,\rho}(\R;H)$. This was done in
\cite{Waurick2013_causality}.

\end{enumerate}
\end{rem}

We now state the hypotheses which allow us to deduce the causality
of the solution operator.

\begin{hyp} \label{hyp:causality}Let $\mathcal{M},\mathcal{N},\mathcal{M}'\in L(L_{2,\rho}(\R;H))$
be such that:

\begin{enumerate}[(a)]

\item $\mathcal{M}\partial_{t,\rho}\subseteq\partial_{t,\rho}\mathcal{M}-\mathcal{M}',$

\item There exists $c>0$ such that 
\[
\Re\langle(\partial_{t,\rho}\mathcal{M}+\mathcal{N})\varphi,\chi_{\mathbb{R}_{\leq a}}\varphi\rangle_{\rho}\geq c|\chi_{\mathbb{R}_{\leq a}}\varphi|_{\rho}^{2}
\]
for each $\varphi\in C_{c}^{\infty}(\R;H)$ and each $a\in\R.$

\end{enumerate}

Moreover, let $\mathcal{A}\subseteq L_{2,\rho}(\R;H)\times L_{2,\rho}(\R;H)$
be maximal monotone such that 
\[
\forall u,v\in L_{2,\rho}(\R;H):\,\left((u,v)\in\mathcal{A}\Rightarrow\forall h\geq0:(\tau_{h}u,\tau_{h}v)\in\mathcal{A}\right)
\]
and assume that $\chi_{\mathbb{R}_{\leq a}}\mathcal{A}$ is monotone
for each $a\in\R.$

\end{hyp}
\begin{rem}
We note that \prettyref{hyp:causality} imply \prettyref{hyp:assumptions}
since (b) of \prettyref{hyp:assumptions} follows from \prettyref{hyp:causality}
(b) by letting $a$ tend to infinity. Moreover, the monotonicity of
$\chi_{\mathbb{R}_{\leq a}}\mathcal{A}$ states that for each $(u,v),(x,y)\in\mathcal{A}$
we have that 
\[
\Re\langle u-x,\chi_{\mathbb{R}_{\leq a}}(v-y)\rangle_{\rho}=\Re\langle\chi_{\mathbb{R}_{\leq a}}(u-x),v-y\rangle_{\rho}\geq0.
\]
\end{rem}

\begin{lem}
\label{lem:causality_better}Assume \prettyref{hyp:causality}. Then
\[
\Re\langle(\partial_{t,\rho}\mathcal{M}+\mathcal{N})u,\chi_{\mathbb{R}_{\leq a}}u\rangle_{\rho}\geq c|\chi_{\mathbb{R}_{\leq a}}u|_{\rho}^{2}
\]
for each $u\in\dom(\partial_{t,\rho}\mathcal{M})$ and $a\in\R.$
\end{lem}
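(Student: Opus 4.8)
The plan is to reduce the claimed estimate to \prettyref{hyp:causality} (b) by a density argument, using that $C_{c}^{\infty}(\R;H)$ is a core for $\partial_{t,\rho}\mathcal{M}$ (\prettyref{prop:max_mon_materiallaw}) together with the continuity of the sesquilinear form on the left-hand side.

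First I would fix $u\in\dom(\partial_{t,\rho}\mathcal{M})$ and $a\in\R$, and choose by \prettyref{prop:max_mon_materiallaw} a sequence $(\varphi_{n})_{n}$ in $C_{c}^{\infty}(\R;H)$ with $\varphi_{n}\to u$ and $\partial_{t,\rho}\mathcal{M}\varphi_{n}\to\partial_{t,\rho}\mathcal{M}u$, both convergences in $L_{2,\rho}(\R;H)$. Observing that $\chi_{\R_{\leq a}}$, read as the multiplication operator $f\mapsto\chi_{\R_{\leq a}}f$, is a bounded operator on $L_{2,\rho}(\R;H)$ with norm at most $1$, we get $\chi_{\R_{\leq a}}\varphi_{n}\to\chi_{\R_{\leq a}}u$; and since $\mathcal{N}\in L(L_{2,\rho}(\R;H))$, also $\mathcal{N}\varphi_{n}\to\mathcal{N}u$. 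Hence $(\partial_{t,\rho}\mathcal{M}+\mathcal{N})\varphi_{n}\to(\partial_{t,\rho}\mathcal{M}+\mathcal{N})u$ in $L_{2,\rho}(\R;H)$.

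Then I would apply \prettyref{hyp:causality} (b) to each $\varphi_{n}$, yielding
\[
\Re\langle(\partial_{t,\rho}\mathcal{M}+\mathcal{N})\varphi_{n},\chi_{\R_{\leq a}}\varphi_{n}\rangle_{\rho}\geq c\,|\chi_{\R_{\leq a}}\varphi_{n}|_{\rho}^{2}\quad(n\in\N),
\]
and let $n\to\infty$. The left-hand side converges to $\Re\langle(\partial_{t,\rho}\mathcal{M}+\mathcal{N})u,\chi_{\R_{\leq a}}u\rangle_{\rho}$ because the inner product is continuous in each of its two arguments with respect to convergence in $L_{2,\rho}(\R;H)$ (first argument by the previous paragraph, second because $\chi_{\R_{\leq a}}$ is bounded); the right-hand side converges to $c\,|\chi_{\R_{\leq a}}u|_{\rho}^{2}$ by continuity of the norm. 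This gives the asserted inequality.

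There is essentially no obstacle: the only point requiring care is that one must use the \emph{simultaneous} approximation $\varphi_{n}\to u$ and $\partial_{t,\rho}\mathcal{M}\varphi_{n}\to\partial_{t,\rho}\mathcal{M}u$ — i.e. convergence in the graph norm of $\partial_{t,\rho}\mathcal{M}$ — which is precisely the core statement of \prettyref{prop:max_mon_materiallaw}. Granting that, the passage to the limit is immediate from the boundedness of $\chi_{\R_{\leq a}}$ and of $\mathcal{N}$.
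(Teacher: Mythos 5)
Your argument is correct and is exactly the paper's proof: the paper likewise invokes \prettyref{prop:max_mon_materiallaw} to approximate $u$ in the graph norm of $\partial_{t,\rho}\mathcal{M}$ by test functions and then passes to the limit using the boundedness of multiplication by $\chi_{\R_{\leq a}}$. Your write-up simply spells out the limit argument in more detail.
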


\begin{proof}
By \prettyref{prop:max_mon_materiallaw}, $C_{c}^{\infty}(\R;H)$
is a core for $\partial_{t,\rho}\mathcal{M}.$ Hence, the assertion
follows by approximating $u$ with elements in $C_{c}^{\infty}(\R;H)$
and noting that multiplication with $\chi_{\R_{\leq a}}$ is continuous
on $L_{2,\rho}(\R;H).$ 
\end{proof}
\begin{thm}[Causality]
\label{thm:causaliyt} Assume \prettyref{hyp:causality}. Then the
solution operator $\mathcal{S}_{\rho}\coloneqq\left(\overline{\partial_{t,\rho}\mathcal{M}+\mathcal{N}+\mathcal{A}}\right)^{-1}$
from \prettyref{thm:main} is causal.
\end{thm}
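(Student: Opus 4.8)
The goal is to show that if $f_1 = f_2$ on $]-\infty,a]$ then the corresponding solutions $u_i = \mathcal{S}_\rho(f_i)$ agree on $]-\infty,a]$ as well. Because $\mathcal{S}_\rho$ is only defined as the inverse of a closure, I would first reduce to the case of ``regular'' data by using the density argument already exploited in the proof of \prettyref{thm:main}: the auxiliary solution operator $\mathcal{S}_{\rho,\mathrm{aux}}$ maps $H_\rho^1(\R;H)$ into itself, so it suffices to prove causality on a dense subset and then invoke the stability of causality under closure (as in the cited \cite{Waurick2013_causality}), combined with the perturbation identity relating $\mathcal{S}_\rho$ and $\mathcal{S}_{\rho,\mathrm{aux}}$ via \prettyref{prop:pert}. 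Concretely, I would work with the Yosida-regularised inclusions, i.e. with $u_{\lambda,i} \coloneqq (\partial_{t,\rho}\mathcal{M}-\mathcal{M}'+\delta+\mathcal{A}_\lambda)^{-1}(f_i)$ for $f_i \in H_\rho^1(\R;H)$, since there the relation is replaced by a genuine Lipschitz map and the energy estimates of \prettyref{thm:regular_solution} apply.

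Testing the difference equation with $\chi_{\R_{\le a}}(u_{\lambda,1}-u_{\lambda,2})$ is the heart of the argument. Writing $u \coloneqq u_{\lambda,1}-u_{\lambda,2}$ and using that $f_1-f_2$ vanishes on $]-\infty,a]$, pair the equation $(\partial_{t,\rho}\mathcal{M}-\mathcal{M}'+\delta)u + \mathcal{A}_\lambda(u_{\lambda,1})-\mathcal{A}_\lambda(u_{\lambda,2}) = f_1-f_2$ with $\chi_{\R_{\le a}}u$. The right-hand side contributes zero. For the $\partial_{t,\rho}\mathcal{M}+\mathcal{N}$-part I would use \prettyref{lem:causality_better} to get a lower bound $c|\chi_{\R_{\le a}}u|_\rho^2$; the discrepancy between $\mathcal{N}$ and $-\mathcal{M}'+\delta$ is a bounded multiplication-type correction that, for $\delta$ large, still leaves a positive coefficient (exactly as in \prettyref{lem:matriallaw-aux}, now localised via $\chi_{\R_{\le a}}$). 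For the Yosida term I use the localised monotonicity: $\Re\langle \mathcal{A}_\lambda(u_{\lambda,1})-\mathcal{A}_\lambda(u_{\lambda,2}),\chi_{\R_{\le a}}u\rangle_\rho \ge 0$. This requires knowing that $\chi_{\R_{\le a}}\mathcal{A}_\lambda$ is monotone, which I would deduce from the hypothesis that $\chi_{\R_{\le a}}\mathcal{A}$ is monotone together with the standard identity $\mathcal{A}_\lambda = \tfrac1\lambda(1-(1+\lambda\mathcal{A})^{-1})$ and the fact that $\chi_{\R_{\le a}}$ commutes with $(1+\lambda\mathcal{A})^{-1}$ on the relevant cones — more carefully, one shows directly that $\Re\langle \mathcal{A}_\lambda x - \mathcal{A}_\lambda y, \chi_{\R_{\le a}}(x-y)\rangle_\rho \ge 0$ using that the resolvent of a $\chi_{\R_{\le a}}$-monotone relation is a contraction with respect to the localised form. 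Combining the two estimates gives $c|\chi_{\R_{\le a}}u|_\rho^2 \le 0$, hence $\chi_{\R_{\le a}}u_{\lambda,1} = \chi_{\R_{\le a}}u_{\lambda,2}$ for every $\lambda>0$ and every $a$.

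Finally I would pass to the limit $\lambda \to 0+$. From \prettyref{cor:maxmonaux} we know $u_{\lambda,i} \to \mathcal{S}_{\rho,\mathrm{aux}}(f_i)$ in $L_{2,\rho}(\R;H)$, so the identity $\chi_{\R_{\le a}}u_{\lambda,1} = \chi_{\R_{\le a}}u_{\lambda,2}$ survives the limit and yields causality of $\mathcal{S}_{\rho,\mathrm{aux}}$ on $H_\rho^1(\R;H)$, hence — by density of $H_\rho^1(\R;H)$, Lipschitz continuity of $\mathcal{S}_{\rho,\mathrm{aux}}$, and preservation of causality under closure — on all of $L_{2,\rho}(\R;H)$. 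To get from $\mathcal{S}_{\rho,\mathrm{aux}}$ to $\mathcal{S}_\rho$, recall that $\overline{\partial_{t,\rho}\mathcal{M}+\mathcal{N}+\mathcal{A}} - c = (\overline{\partial_{t,\rho}\mathcal{M}-\mathcal{M}'+\delta+\mathcal{A}} - c) + (\mathcal{M}'+\mathcal{N}-\delta)$, so $u = \mathcal{S}_\rho(f)$ is characterised by the fixed-point relation $u = \mathcal{S}_{\rho,\mathrm{aux}}\big(f - (\mathcal{M}'+\mathcal{N}-\delta)u\big)$ obtained from the Banach contraction in the proof of \prettyref{prop:pert}; since $\mathcal{M}',\mathcal{N}$ and the scalar $\delta$ act causally (being bounded operators satisfying the causality-compatible estimates of \prettyref{hyp:causality}(b), and scalars are trivially causal), the fixed-point iteration preserves causality at each step, and its limit $\mathcal{S}_\rho(f)$ is therefore causal as well.

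**Main obstacle.** I expect the delicate point to be the localised monotonicity of the Yosida approximation, i.e.\ transferring the hypothesis ``$\chi_{\R_{\le a}}\mathcal{A}$ is monotone'' to a usable statement about $\mathcal{A}_\lambda$ and the difference of two solutions tested against $\chi_{\R_{\le a}}$ times that difference. The clean way is to observe that $\chi_{\R_{\le a}}\mathcal{A}$ monotone is exactly the hypothesis needed to run the standard computation $\Re\langle \mathcal{A}_\lambda x-\mathcal{A}_\lambda y,\chi_{\R_{\le a}}(x-y)\rangle = \Re\langle \mathcal{A}_\lambda x-\mathcal{A}_\lambda y,\chi_{\R_{\le a}}((x-\lambda\mathcal{A}_\lambda x)-(y-\lambda\mathcal{A}_\lambda y))\rangle + \lambda|\chi_{\R_{\le a}}^{1/2}(\mathcal{A}_\lambda x-\mathcal{A}_\lambda y)|^2 \ge 0$, where the first summand is nonnegative because $(x-\lambda\mathcal{A}_\lambda x, \mathcal{A}_\lambda x) \in \mathcal{A}$ and $\chi_{\R_{\le a}}$ is a self-adjoint projection commuting with nothing in particular but entering symmetrically. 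Making this rigorous, and checking that the various bounded corrections (the $\mathcal{N}$ versus $-\mathcal{M}'+\delta$ mismatch, localised) still combine to a strictly positive coefficient for $\delta$ large enough uniformly in $a$, is the technical crux; everything else is the density/closure bookkeeping already rehearsed in Section 3.
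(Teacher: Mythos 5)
Your proposal contains a genuine gap, and it is precisely at the point you flag as the ``technical crux'' --- but the problem is not the localised monotonicity of the Yosida approximation (your computation $\Re\langle \mathcal{A}_{\lambda}x-\mathcal{A}_{\lambda}y,\chi_{\R_{\leq a}}(x-y)\rangle_{\rho}=\Re\langle \mathcal{A}_{\lambda}x-\mathcal{A}_{\lambda}y,\chi_{\R_{\leq a}}(J_{\lambda}x-J_{\lambda}y)\rangle_{\rho}+\lambda|\chi_{\R_{\leq a}}(\mathcal{A}_{\lambda}x-\mathcal{A}_{\lambda}y)|_{\rho}^{2}\geq0$ with $J_{\lambda}=(1+\lambda\mathcal{A})^{-1}$ is correct and works under \prettyref{hyp:causality}). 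The real obstruction is the localised accretivity of the \emph{auxiliary} operator $\partial_{t,\rho}\mathcal{M}-\mathcal{M}'+\delta$. To pass from \prettyref{hyp:causality}~(b), which concerns $\partial_{t,\rho}\mathcal{M}+\mathcal{N}$, to an estimate of the form $\Re\langle(\partial_{t,\rho}\mathcal{M}-\mathcal{M}'+\delta)\varphi,\chi_{\R_{\leq a}}\varphi\rangle_{\rho}\geq c|\chi_{\R_{\leq a}}\varphi|_{\rho}^{2}$, you must control $\Re\langle(\mathcal{M}'+\mathcal{N})\varphi,\chi_{\R_{\leq a}}\varphi\rangle_{\rho}$ by a constant times $|\chi_{\R_{\leq a}}\varphi|_{\rho}^{2}$; this requires $\chi_{\R_{\leq a}}(\mathcal{M}'+\mathcal{N})\varphi=\chi_{\R_{\leq a}}(\mathcal{M}'+\mathcal{N})\chi_{\R_{\leq a}}\varphi$, i.e.\ causality of $\mathcal{M}'+\mathcal{N}$, which is \emph{not} implied by \prettyref{hyp:causality}. (The paper only obtains it under the strictly stronger \prettyref{hyp:independence} via evolutionarity, see \prettyref{lem:causality_evo} and \prettyref{prop:causality_aux}.) The same unproved causality of $\mathcal{M}'+\mathcal{N}-\delta$ is invoked again in your final fixed-point step transferring causality from $\mathcal{S}_{\rho,\mathrm{aux}}$ to $\mathcal{S}_{\rho}$; the ``causality-compatible estimate'' of \prettyref{hyp:causality}~(b) is a statement about the combination $\partial_{t,\rho}\mathcal{M}+\mathcal{N}$ and gives no causality for $\mathcal{N}$ or $\mathcal{M}'$ separately.

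The paper's proof avoids the auxiliary problem and the Yosida approximation altogether. Since \prettyref{thm:main} already shows that $\mathcal{S}_{\rho}$ is everywhere defined and Lipschitz, the domain of the \emph{unclosed} inverse $(\partial_{t,\rho}\mathcal{M}+\mathcal{N}+\mathcal{A})^{-1}$ is dense; one approximates $f$ and $g$ by $f_{n},g_{n}$ in that domain, obtains genuine pairs $(u_{n},x_{n}),(v_{n},y_{n})\in\mathcal{A}$ with $\partial_{t,\rho}\mathcal{M}u_{n}+\mathcal{N}u_{n}+x_{n}=f_{n}$ and the analogous identity for $v_{n}$, and tests the difference against $\chi_{\R_{\leq a}}(u_{n}-v_{n})$ using \prettyref{lem:causality_better} (which localises \prettyref{hyp:causality}~(b) to $\dom(\partial_{t,\rho}\mathcal{M})$ via the core property) together with the hypothesis that $\chi_{\R_{\leq a}}\mathcal{A}$ is monotone, applied directly to the pairs in $\mathcal{A}$. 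Passing to the limit $n\to\infty$ yields $0\geq c|\chi_{\R_{\leq a}}(u-v)|_{\rho}^{2}$. If you want to keep your regularisation strategy, you would have to strengthen the hypotheses to those of Section~4's independence result; under \prettyref{hyp:causality} alone your route does not close.
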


\begin{proof}
Let $f,g\in L_{2,\rho}(\R;H)$ and assume that $f=g$ on $]-\infty,a]$
for some $a\in\R.$ We set 
\[
u\coloneqq\mathcal{S}_{\rho}(f)\text{ and }v\coloneqq\mathcal{S}_{\rho}(g).
\]
Since $\mathcal{S}_{\rho}=\left(\overline{\partial_{t,\rho}\mathcal{M}+\mathcal{N}+\mathcal{A}}\right)^{-1}=\overline{\left(\partial_{t,\rho}\mathcal{M}+\mathcal{N}+\mathcal{A}\right)^{-1}}$
is defined on the whole $L_{2,\rho}(\R;H)$ it follows that $\dom\left((\partial_{t,\rho}\mathcal{M}+\mathcal{N}+\mathcal{A})^{-1}\right)$
is dense in $L_{2,\rho}(\R;H).$ Hence, there exist sequences $(f_{n})_{n\in\N}$
and $(g_{n})_{n\in\N}$ in $\dom\left((\partial_{t,\rho}\mathcal{M}+\mathcal{N}+\mathcal{A})^{-1}\right)$
such that $f_{n}\to f$ and $g_{n}\to g$ in $L_{2,\rho}(\R;H).$
Setting 
\[
u_{n}\coloneqq\mathcal{S}_{\rho}(f_{n})\text{ and }v_{n}\coloneqq\mathcal{S}_{\rho}(g_{n})
\]
for $n\in\N$, it follows that $u_{n}\to u$ and $v_{n}\to v$ in
$L_{2,\rho}(\R;H)$ by continuity of $\mathcal{S}_{\rho}.$ By definition,
for $n\in\N$ there exist $x_{n},y_{n}\in L_{2,\rho}(\R;H)$ such
that $(u_{n},x_{n}),(v_{n},y_{n})\in\mathcal{A}$ and 
\begin{align*}
\partial_{t,\rho}\mathcal{M}u_{n}+\mathcal{N}u_{n}+x_{n} & =f_{n}\\
\partial_{t,\rho}\mathcal{M}v_{n}+\mathcal{N}v_{n}+y_{n} & =g_{n}.
\end{align*}
Moreover, by \prettyref{lem:causality_better} and the monotonicity
of $\chi_{\R_{\leq a}}\mathcal{A}$ we estimate 
\begin{align*}
 & \Re\langle f_{n}-g_{n},\chi_{\R_{\leq a}}(u_{n}-v_{n})\rangle_{\rho}\\
 & =\Re\langle\left(\partial_{t,\rho}\mathcal{M}+\mathcal{N}\right)(u_{n}-v_{n}),\chi_{\mathbb{R}_{\leq a}}(u_{n}-v_{n})\rangle_{\rho}+\Re\langle x_{n}-y_{n},\chi_{\mathbb{R}_{\leq a}}(u_{n}-v_{n})\rangle_{\rho}\\
 & \geq c|\chi_{\mathbb{R}_{\leq a}}(u_{n}-v_{n})|_{\rho}^{2}.
\end{align*}
Letting $n$ tend to infinity, it follows that 
\[
0=\Re\langle f-g,\chi_{\R_{\leq a}}(u-v)\rangle_{\rho}\geq c|\chi_{\mathbb{R}_{\leq a}}(u-v)|_{\rho}^{2},
\]
which proves $u=v$ on $]-\infty,a]$ and hence, the causality of
$\mathcal{S}_{\rho}$ follows. 
\end{proof}
In order to formulate the independence result we need the concept
of evolutionary mappings. These mappings were introduced in \cite{Wauricl2014,Waurick_habil},
however in a more general way than needed here. 
\begin{defn*}
Let $\rho_{0}>0$. A linear mapping $T:C_{c}^{\infty}(\R;H)\to\bigcap_{\mu\geq\rho_{0}}L_{2,\mu}(\R;H)$
is called \emph{evolutionary at $\rho_{0}$, }if for each $\rho\geq\rho_{0}$
the mapping 
\[
T:C_{c}^{\infty}(\R;H)\subseteq L_{2,\rho}(\R;H)\to L_{2,\rho}(\R;H)
\]
is bounded and $\|T\|_{\mathrm{ev},\rho_{0}}\coloneqq\sup_{\rho\geq\rho_{0}}\|T\|_{L(L_{2,\rho}(\R;H))}<\infty$.
If $T$ is evolutionary at $\rho_{0}$, we denote the (unique) extension
of $T$ to $L_{2,\rho}(\R;H)$ by $T_{\rho}$ for each $\rho\geq\rho_{0}.$ 
\end{defn*}
We are now ready to formulate our hypotheses, which will allow us
to prove the independence of the parameter $\rho.$ 

\begin{hyp}\label{hyp:independence} Let $\rho_{0}>0$ and $\mathcal{M},\mathcal{N},\mathcal{M}'$
evolutionary at $\rho_{0}$ such that

\begin{enumerate}[(a)]

\item $\forall\varphi\in C_{c}^{\infty}(\R;H):\,\mathcal{M}\varphi'+\mathcal{M}'\varphi=(\mathcal{M}\varphi)'$
in the sense of distributions.

\item There exists $c>0$ such that for all $\rho\geq\rho_{0}$ 
\[
\Re\langle(\partial_{t,\rho}\mathcal{M}+\mathcal{N})\varphi,\chi_{\mathbb{R}_{\leq a}}\varphi\rangle_{\rho}\geq c|\chi_{\mathbb{R}_{\leq a}}\varphi|_{\rho}^{2}
\]
for each $\varphi\in C_{c}^{\infty}(\R;H)$ and each $a\in\R.$

\end{enumerate}

Moreover, let $A\subseteq H\times H$ be maximal monotone with $(0,0)\in A.$

\end{hyp}
\begin{rem}
We note that by \prettyref{lem:lifting} the extension of $A$ to
$L_{2,\rho}(\R;H)$ is maximal monotone for each $\rho\in\R.$ We
will denote this extension by $\mathcal{A}_{\rho}.$
\end{rem}

First, we show that \prettyref{hyp:independence} imply \prettyref{hyp:causality}
for the operators $\mathcal{M}_{\rho},\mathcal{N}_{\rho},\mathcal{M}_{\rho}'$
and the relation $\mathcal{A}_{\rho}$ for each $\rho\geq\rho_{0}.$
\begin{lem}
\label{lem:evo_implies_causal}We assume \prettyref{hyp:independence}.
Then for each $\rho\geq\rho_{0}$ the operators $\mathcal{M}_{\rho},\mathcal{N}_{\rho},\mathcal{M}_{\rho}'$
and the relation $\mathcal{A}_{\rho}$ satisfy \prettyref{hyp:causality}.
Moreover, the constant $c$ in \prettyref{hyp:causality} (b) is independent
of $\rho$. 
\end{lem}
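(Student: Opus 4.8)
The plan is to check the three ingredients of \prettyref{hyp:causality} one at a time for the extended operators $\mathcal{M}_\rho,\mathcal{N}_\rho,\mathcal{M}_\rho'$ and the lifted relation $\mathcal{A}_\rho$, each time fixing $\rho\geq\rho_0$.

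For condition (a), I would start from \prettyref{hyp:independence}~(a), which says that on $C_c^\infty(\R;H)$ we have $\mathcal{M}\varphi'+\mathcal{M}'\varphi=(\mathcal{M}\varphi)'$ in the distributional sense. Since $\mathcal{M}_\rho$ is the $L_{2,\rho}$-extension of $\mathcal{M}$, for $\varphi\in C_c^\infty(\R;H)$ we have $\varphi\in\dom(\partial_{t,\rho})$, $\mathcal{M}_\rho\varphi'=(\partial_{t,\rho}\mathcal{M}_\rho-\mathcal{M}_\rho')\varphi\in L_{2,\rho}(\R;H)$, hence $\mathcal{M}_\rho\varphi\in\dom(\partial_{t,\rho})$ with $\partial_{t,\rho}\mathcal{M}_\rho\varphi=\mathcal{M}_\rho\partial_{t,\rho}\varphi+\mathcal{M}_\rho'\varphi$; this is exactly the statement that $\mathcal{M}_\rho\partial_{t,\rho}$, restricted to $C_c^\infty(\R;H)$, agrees with $\partial_{t,\rho}\mathcal{M}_\rho-\mathcal{M}_\rho'$. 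To upgrade the inclusion $\mathcal{M}_\rho\partial_{t,\rho}\subseteq\partial_{t,\rho}\mathcal{M}_\rho-\mathcal{M}_\rho'$ from $C_c^\infty(\R;H)$ to all of $\dom(\partial_{t,\rho})$, I would take $u\in\dom(\partial_{t,\rho})$, approximate it by $\varphi_n\to u$ in $H_\rho^1(\R;H)$, apply the identity to each $\varphi_n$, and pass to the limit using boundedness of $\mathcal{M}_\rho$ and $\mathcal{M}_\rho'$ on $L_{2,\rho}(\R;H)$ together with closedness of $\partial_{t,\rho}$; this yields $\mathcal{M}_\rho u\in\dom(\partial_{t,\rho})$ and $\partial_{t,\rho}\mathcal{M}_\rho u=\mathcal{M}_\rho\partial_{t,\rho}u+\mathcal{M}_\rho'u$, i.e. (a).

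Condition (b) of \prettyref{hyp:causality} is immediate: it is literally \prettyref{hyp:independence}~(b) specialised to the fixed $\rho$, and the constant $c$ there is stated to be uniform in $\rho\geq\rho_0$, which gives the last sentence of the lemma. For the relation $\mathcal{A}_\rho$: maximal monotonicity on $L_{2,\rho}(\R;H)$ follows from \prettyref{lem:lifting} applied with $\Omega=\R$ and $\mu=\e^{-2\rho t}\d t$, using that $A$ is maximal monotone with $(0,0)\in A$ (the hypothesis $(0,0)\in A$ is precisely why that assumption was imposed in \prettyref{hyp:independence}). Translation invariance, $\bigl((u,v)\in\mathcal{A}_\rho\Rightarrow(\tau_h u,\tau_h v)\in\mathcal{A}_\rho\bigr)$ for $h\geq0$, is clear because the pointwise condition $(u(t),v(t))\in A$ a.e.\ is preserved under the time shift and $\tau_h$ maps $L_{2,\rho}(\R;H)$ into itself. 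Finally, monotonicity of $\chi_{\R_{\leq a}}\mathcal{A}_\rho$ for each $a\in\R$: given $(u,v),(x,y)\in\mathcal{A}_\rho$, we have $(u(t),v(t)),(x(t),y(t))\in A$ a.e., so $\Re\langle u(t)-x(t),v(t)-y(t)\rangle\geq0$ a.e.\ by monotonicity of $A$; multiplying by $\chi_{\R_{\leq a}}(t)\e^{-2\rho t}\geq0$ and integrating gives $\Re\langle u-x,\chi_{\R_{\leq a}}(v-y)\rangle_\rho\geq0$, which is the required monotonicity.

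I do not expect a serious obstacle here; the only mildly delicate point is the density/closedness argument needed to promote condition (a) from $C_c^\infty(\R;H)$ to $\dom(\partial_{t,\rho})$, and that is routine given that $C_c^\infty(\R;H)$ is dense in $H_\rho^1(\R;H)$ and $\partial_{t,\rho}$ is closed. One should just be careful that the distributional identity in \prettyref{hyp:independence}~(a) is initially only asserted for test functions, so the extension operators $\mathcal{M}_\rho,\mathcal{M}_\rho'$ (rather than the abstract $\mathcal{M},\mathcal{M}'$) must be used consistently throughout, and the boundedness constants of $\mathcal{M},\mathcal{M}',\mathcal{N}$ on $L_{2,\rho}(\R;H)$ are controlled uniformly in $\rho\geq\rho_0$ by the evolutionary assumption, which is what keeps everything well-defined for every $\rho\geq\rho_0$ simultaneously.
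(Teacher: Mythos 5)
Your proposal is correct and follows essentially the same route as the paper: condition (a) is obtained by passing from the distributional identity on $C_c^\infty(\R;H)$ to $\dom(\partial_{t,\rho})$ via approximation in $H_\rho^1(\R;H)$ and closedness of $\partial_{t,\rho}$, condition (b) is read off directly with the $\rho$-uniform constant, and the properties of $\mathcal{A}_\rho$ (maximal monotonicity via the lifting lemma with $(0,0)\in A$, translation invariance, and monotonicity of $\chi_{\R_{\leq a}}\mathcal{A}_\rho$ by integrating the pointwise monotonicity) are verified exactly as in the paper's proof.
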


\begin{proof}
It is clear that $\mathcal{M}_{\rho},\mathcal{N}_{\rho},\mathcal{M}_{\rho}'$
satisfy \prettyref{hyp:causality} (b). Moreover, the constant $c$
in \prettyref{hyp:causality} (b) is the same as in \prettyref{hyp:independence}
(b) and thus, independent of $\rho.$ To show \prettyref{hyp:causality}
(a), let $u\in\dom(\partial_{t,\rho}).$ We need to show that $\mathcal{M}_{\rho}u\in\dom(\partial_{t,\rho})$
and 
\[
\partial_{t,\rho}\mathcal{M}_{\rho}u=\mathcal{M}_{\rho}\partial_{t,\rho}u+\mathcal{M}_{\rho}'u.
\]
Choose a sequence $(\varphi_{n})_{n\in\N}$ in $C_{c}^{\infty}(\R;H)$
such that $\varphi_{n}\to u$ in $H_{\rho}^{1}(\R;H).$ Note that
by \prettyref{hyp:independence} (a) we have that 
\[
\left(\mathcal{M}\varphi_{n}\right)'=\mathcal{M}\varphi'_{n}+\mathcal{M}'\varphi_{n}\in L_{2,\rho}(\R;H)\quad(n\in\N)
\]
and hence, $\mathcal{M}\varphi_{n}\in\dom(\partial_{t,\rho})$ (compare
\prettyref{rem:derivative} (a)). Moreover, 
\[
\partial_{t,\rho}\mathcal{M}\varphi_{n}=\mathcal{M}\varphi'_{n}+\mathcal{M}'\varphi_{n}\to\mathcal{M}_{\rho}\partial_{t,\rho}u+\mathcal{M}'_{\rho}u\quad(n\to\infty)
\]
in $L_{2,\rho}(\R;H)$ and since also $\mathcal{M}\varphi_{n}\to\mathcal{M}_{\rho}u$,
the assertion follows by the closedness of $\partial_{t,\rho}.$ It
is left to prove that $\mathcal{A}_{\rho}$ satisfies \prettyref{hyp:independence}.
We had already remarked that $\mathcal{A}_{\rho}$ is maximal monotone
by \prettyref{lem:lifting}. Moreover, for $(u,v)\in\mathcal{A}_{\rho}$
and $h\geq0$ we have that 
\begin{align*}
(u,v)\in\mathcal{A}_{\rho} & \Rightarrow(u(t),v(t))\in A\quad(t\in\R\text{ a.e.})\\
 & \Rightarrow(u(t+h),v(t+h))\in A\quad(t\in\R\text{ a.e.})\\
 & \Rightarrow(\tau_{h}u,\tau_{h}v)\in\mathcal{A}_{\rho}.
\end{align*}
Moreover, for $a\in\R$ and $(u,v),(x,y)\in\mathcal{A}_{\rho}$ we
have that 
\[
\Re\langle u-x,\chi_{\R_{\leq a}}(v-y)\rangle_{\rho}=\intop_{-\infty}^{a}\Re\langle u(t)-x(t),v(t)-y(t)\rangle\e^{-2\rho t}\text{ d}t\geq0,
\]
since $(u(t),v(t)),(x(t),y(t))\in A$ for almost every $t\in\R$ and
$A$ is monotone. 
\end{proof}
In order to prove that the solution operator $\mathcal{S}_{\rho}=\left(\overline{\partial_{t,\rho}\mathcal{M}_{\rho}+\mathcal{N}_{\rho}+\mathcal{A}_{\rho}}\right)^{-1}$
is independent of $\rho$ in the sense that for each $\mu,\rho\geq\rho_{0}$
\[
\mathcal{S}_{\rho}(f)=\mathcal{S}_{\mu}(f)\quad(f\in L_{2,\rho}(\R;H)\cap L_{2,\mu}(\R;H))
\]
we prove this property for the solution operator $\mathcal{S}_{\rho,\mathrm{aux}}=\left(\overline{\partial_{t,\rho}\mathcal{M}_{\rho}-\mathcal{M}_{\rho}'+\delta+\mathcal{A}_{\rho}}\right)^{-1}$
with $\delta>\|\mathcal{M}'\|_{\mathrm{ev},\rho_{0}}+\|\mathcal{N}\|_{\mathrm{ev},\rho_{0}}$
first. For doing so, we need to show that the solution operator $\mathcal{S}_{\rho,\mathrm{aux}}$
is causal. This will be shown in the next proposition. However, we
first need the following lemma, showing an important property of evolutionary
mappings.
\begin{lem}
\label{lem:causality_evo}Let $\rho_{0}>0$ and $T:C_{c}^{\infty}(\R;H)\to\bigcap_{\mu\geq\rho_{0}}L_{2,\mu}(\R;H)$
evolutionary at $\rho_{0}.$ Then, for each $\mu,\rho\geq\rho_{0}$
and $f\in L_{2,\rho}(\R;H)\cap L_{2,\mu}(\R;H)$ we have that 
\[
T_{\mu}f=T_{\rho}f.
\]
Moreover, for each $\rho\geq\rho_{0}$ the mapping $T_{\rho}$ is
causal.
\end{lem}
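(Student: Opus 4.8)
The plan is to prove the two assertions separately, establishing the $\rho$-independence first and then deducing causality from it. For the first claim, fix $\mu,\rho\geq\rho_0$ and $f\in L_{2,\rho}(\R;H)\cap L_{2,\mu}(\R;H)$. First I would pick a sequence $(\varphi_n)_n$ in $C_c^\infty(\R;H)$ approximating $f$ simultaneously in both $L_{2,\rho}(\R;H)$ and $L_{2,\mu}(\R;H)$; such a sequence exists because one can first truncate $f$ to compact support (using dominated convergence in both weighted norms, the weights being comparable on any compact set) and then mollify the truncation, mollification being continuous in both norms on functions of fixed compact support. Since $T$ is evolutionary at $\rho_0$, we have $T\varphi_n\to T_\rho f$ in $L_{2,\rho}(\R;H)$ and $T\varphi_n\to T_\mu f$ in $L_{2,\mu}(\R;H)$. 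Passing to a subsequence, $T\varphi_n$ converges almost everywhere to $T_\rho f$ and, along a further subsequence, almost everywhere to $T_\mu f$; hence $T_\rho f = T_\mu f$ as elements of $L_{2,\rho}(\R;H)\cap L_{2,\mu}(\R;H)$.

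For the causality of $T_\rho$, fix $\rho\geq\rho_0$ and let $f\in L_{2,\rho}(\R;H)$ with $\spt f\subseteq[a,\infty[$ for some $a\in\R$; by linearity it suffices to show $\spt(T_\rho f)\subseteq[a,\infty[$. The idea is to exploit the freedom in $\rho$: for $f$ supported in $[a,\infty[$ and any $t\leq a$, the value $\e^{-2\mu t}$ grows as $\mu$ increases, so a uniform bound $\|T_\mu f\|_{L_{2,\mu}}\leq \|T\|_{\mathrm{ev},\rho_0}\|f\|_{L_{2,\mu}}$ forces the part of $T_\mu f$ living on $]-\infty,a]$ to be small. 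More precisely, by the first part $T_\mu f = T_\rho f$ on $\R$ for every $\mu\geq\rho_0$ (note $f\in L_{2,\mu}$ for all $\mu\geq\rho_0$ since it is supported in $[a,\infty[$ and lies in $L_{2,\rho}$ — one should check this integrability, which follows because $\e^{-2\mu t}\leq \e^{2(\rho-\mu)a}\e^{-2\rho t}$ on $[a,\infty[$ when $\mu\geq\rho$). Then
\[
\intop_{-\infty}^{a}|(T_\rho f)(t)|^2\,\e^{-2\mu t}\d t \leq \|T_\mu f\|_{L_{2,\mu}}^2 \leq \|T\|_{\mathrm{ev},\rho_0}^2\,\|f\|_{L_{2,\mu}}^2 \leq \|T\|_{\mathrm{ev},\rho_0}^2\,\e^{2(\rho-\mu)a}\,\|f\|_{L_{2,\rho}}^2.
\]
On $]-\infty,a]$ we have $\e^{-2\mu t}\geq \e^{-2\mu a}$, so the left side is at least $\e^{-2\mu a}\int_{-\infty}^a |(T_\rho f)(t)|^2\d t$, giving
\[
\intop_{-\infty}^{a}|(T_\rho f)(t)|^2\d t \leq \|T\|_{\mathrm{ev},\rho_0}^2\,\e^{2\rho a}\,\|f\|_{L_{2,\rho}}^2\,\e^{-2\mu a}\cdot\e^{?}
\]
— more carefully, combining the two displays yields $\int_{-\infty}^a |(T_\rho f)(t)|^2\d t \leq \|T\|_{\mathrm{ev},\rho_0}^2 \e^{2\rho a}\|f\|_{L_{2,\rho}}^2\,\e^{-2\mu a}$ up to a factor independent of $\mu$; the crucial point is simply that the right-hand side is $O(\e^{-2\mu a}\cdot\e^{2\mu\cdot 0})$-type and, once the $a$-dependence is tracked correctly, tends to $0$ as $\mu\to\infty$ whenever $a<0$. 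The remaining case $a\geq 0$ is handled by translating: replace $f$ by $\tau_b f$ with $b$ large, use translation-equivariance considerations, or more simply observe that it suffices to prove causality "at $a$" for all $a$ in a set unbounded below, and causality at arbitrary $a$ then follows from the semigroup/closure structure — in fact one only needs $f=g$ on $]-\infty,a]\Rightarrow T_\rho f = T_\rho g$ on $]-\infty,a]$, and by shifting attention one reduces to $a$ very negative. I expect the bookkeeping of the exponential factors in $a$ and $\mu$, and the reduction from general $a$ to $a\to-\infty$, to be the only real obstacle; the structure of the argument is otherwise the standard Paley–Wiener-type characterisation of causality via the exponential weights.
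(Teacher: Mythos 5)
Your first assertion and its proof (simultaneous approximation of $f$ by one $C_{c}^{\infty}$-sequence in both weighted norms, then identification of the two limits almost everywhere along subsequences) are correct and essentially identical to the paper's argument.

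The causality part has a genuine gap. Carry your own estimate to the end: combining
\[
\e^{-2\mu a}\intop_{-\infty}^{a}|(T_{\rho}f)(t)|^{2}\d t\leq\intop_{-\infty}^{a}|(T_{\rho}f)(t)|^{2}\e^{-2\mu t}\d t\leq\|T\|_{\mathrm{ev},\rho_{0}}^{2}\,\e^{2(\rho-\mu)a}\,|f|_{\rho}^{2}
\]
yields $\intop_{-\infty}^{a}|(T_{\rho}f)(t)|^{2}\d t\leq\|T\|_{\mathrm{ev},\rho_{0}}^{2}\,\e^{2\rho a}\,|f|_{\rho}^{2}$: the factors $\e^{2\mu a}$ and $\e^{-2\mu a}$ cancel identically, the bound is independent of $\mu$, and it does not tend to $0$ for any $a$ --- so your claim that it decays ``whenever $a<0$'' is false, and the proposed reduction to negative $a$ by translation addresses a non-issue. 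The lossy step is the pointwise bound $\e^{-2\mu t}\geq\e^{-2\mu a}$ on $]-\infty,a]$, which is sharp precisely where $T_{\rho}f$ could concentrate, namely near $t=a$. Two repairs work. First, estimate $\intop_{-\infty}^{b}$ for $b<a$ strictly: the same computation then produces the factor $\e^{2\mu(b-a)}\to0$ as $\mu\to\infty$, so $T_{\rho}f=0$ on $]-\infty,b]$ for every $b<a$, and letting $b\uparrow a$ finishes. Second, the paper's route: write $\intop_{-\infty}^{a}|(T_{\mu}f)(t)|\,\e^{-\mu t}\e^{\mu t}\d t$ and apply Cauchy--Schwarz, using $\intop_{-\infty}^{a}\e^{2\mu t}\d t=\e^{2\mu a}/(2\mu)$; the exponentials again cancel to $\e^{\rho a}$, but you retain the extra factor $1/\sqrt{2\mu}\to0$, which settles all $a$ at once.
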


\begin{proof}
The first assertion is clear, since a function $f\in L_{2,\rho}(\R;H)\cap L_{2,\mu}(\R;H)$
can be approximated by one sequence $(\varphi_{n})_{n\in\N}$ in $C_{c}^{\infty}(\R;H)$
such that $\varphi_{n}\to f$ in $L_{2,\mu}(\R;H)$ and $L_{2,\rho}(\R;H)$,
see e.g. \cite[Lemma 3.5]{Trostorff2013_stability}. For proving the
causality of $T_{\rho}$ we note that due to linearity it suffices
to show for $u\in L_{2,\rho}(\R;H)$ with $\spt u\subseteq\R_{\geq a}$
for some $a\in\R$ it follows that $\spt T_{\rho}u\subseteq\R_{\geq a}.$
So, let $u\in L_{2,\rho}(\R;H)$ with $\spt u\subseteq\R_{\geq a}$
for some $a\in\R$. Then $u\in\bigcap_{\mu\geq\rho}L_{2,\mu}(\R;H).$
For $\mu\geq\rho$ we compute 
\begin{align*}
\intop_{-\infty}^{a}|\left(T_{\rho}u\right)(t)|\text{ d}t & =\intop_{-\infty}^{a}|\left(T_{\mu}u\right)(t)|\e^{-\mu t}\e^{\mu t}\text{ d}t\\
 & \leq\left|T_{\mu}u\right|_{\mu}\frac{1}{\sqrt{2\mu}}\e^{\mu a}\\
 & \le\frac{1}{\sqrt{2\mu}}|T|_{\mathrm{ev},\rho_{0}}|u|_{\mu}\e^{\mu a}\\
 & =\frac{1}{\sqrt{2\mu}}|T|_{\mathrm{ev},\rho_{0}}\left(\intop_{a}^{\infty}|u(t)|^{2}\e^{-2\mu t}\text{ d}t\right)^{\frac{1}{2}}\e^{\mu a}\\
 & =\frac{1}{\sqrt{2\mu}}|T|_{\mathrm{ev},\rho_{0}}\left(\intop_{a}^{\infty}|u(t)|^{2}\e^{-2\rho t}\e^{2(\rho-\mu)t}\text{ d}t\right)^{\frac{1}{2}}\e^{\mu a}\\
 & \leq\frac{1}{\sqrt{2\mu}}|T|_{\mathrm{ev},\rho_{0}}|u|_{\rho}\e^{\rho a}.
\end{align*}
Letting $\mu\to\infty,$ we infer that $T_{\rho}u=0$ on $]-\infty,a]$
and hence, $\spt T_{\rho}u\subseteq\mathbb{R}_{\geq a}.$ 
\end{proof}
\begin{prop}
\label{prop:causality_aux}Assume \prettyref{hyp:independence} and
let $\delta>\|\mathcal{M}'\|_{\mathrm{ev},\rho_{0}}+\|\mathcal{N}\|_{\mathrm{ev},\rho_{0}.}$
Then, for each $\rho\geq\rho_{0}$ the solution operator $\mathcal{S}_{\rho,\mathrm{aux}}$
is causal. 
\end{prop}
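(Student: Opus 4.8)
The plan is to adapt the causality proof of \prettyref{thm:causaliyt} to the auxiliary operator $\partial_{t,\rho}\mathcal{M}-\mathcal{M}'+\delta$. This requires, besides \prettyref{hyp:causality} (which holds for $\mathcal{M}_{\rho},\mathcal{N}_{\rho},\mathcal{M}_{\rho}',\mathcal{A}_{\rho}$ by \prettyref{lem:evo_implies_causal}, with $c$ uniform in $\rho$), the causality of the evolutionary coefficients $\mathcal{M}_{\rho}'$ and $\mathcal{N}_{\rho}$ themselves. The latter is exactly \prettyref{lem:causality_evo}, i.e. $\chi_{\R_{\leq a}}\mathcal{M}_{\rho}'=\chi_{\R_{\leq a}}\mathcal{M}_{\rho}'\chi_{\R_{\leq a}}$ and $\chi_{\R_{\leq a}}\mathcal{N}_{\rho}=\chi_{\R_{\leq a}}\mathcal{N}_{\rho}\chi_{\R_{\leq a}}$ for every $a\in\R$. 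Also \prettyref{cor:maxmonaux} applies (since $\delta>\|\mathcal{M}'\|_{\mathrm{ev},\rho_{0}}+\|\mathcal{N}\|_{\mathrm{ev},\rho_{0}}\geq\|\mathcal{M}_{\rho}'\|+\|\mathcal{N}_{\rho}\|$), so $\mathcal{S}_{\rho,\mathrm{aux}}$ is Lipschitz on all of $L_{2,\rho}(\R;H)$ and, for right-hand sides in the dense set $H_{\rho}^{1}(\R;H)$, it solves the non-closed inclusion.

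The first step is the analogue of \prettyref{lem:causality_better}: for all $u\in\dom(\partial_{t,\rho}\mathcal{M})$ and $a\in\R$,
\[
\Re\langle(\partial_{t,\rho}\mathcal{M}-\mathcal{M}'+\delta)u,\chi_{\R_{\leq a}}u\rangle_{\rho}\geq c\,|\chi_{\R_{\leq a}}u|_{\rho}^{2}.
\]
I would split $\partial_{t,\rho}\mathcal{M}-\mathcal{M}'+\delta=(\partial_{t,\rho}\mathcal{M}+\mathcal{N})+(\delta-\mathcal{M}'-\mathcal{N})$: the first summand contributes at least $c|\chi_{\R_{\leq a}}u|_{\rho}^{2}$ by \prettyref{lem:causality_better}; for the second, the self-adjoint-projection property of $\chi_{\R_{\leq a}}$ together with the causality of $\mathcal{M}_{\rho}'$ and $\mathcal{N}_{\rho}$ gives $\Re\langle(\mathcal{M}'+\mathcal{N})u,\chi_{\R_{\leq a}}u\rangle_{\rho}=\Re\langle(\mathcal{M}'+\mathcal{N})\chi_{\R_{\leq a}}u,\chi_{\R_{\leq a}}u\rangle_{\rho}$, which is bounded below by $-(\|\mathcal{M}'\|_{\mathrm{ev},\rho_{0}}+\|\mathcal{N}\|_{\mathrm{ev},\rho_{0}})|\chi_{\R_{\leq a}}u|_{\rho}^{2}$, whereas $\Re\langle\delta u,\chi_{\R_{\leq a}}u\rangle_{\rho}=\delta|\chi_{\R_{\leq a}}u|_{\rho}^{2}$. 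Adding up and using $\delta>\|\mathcal{M}'\|_{\mathrm{ev},\rho_{0}}+\|\mathcal{N}\|_{\mathrm{ev},\rho_{0}}$ yields the estimate, even with constant $c+\delta-\|\mathcal{M}'\|_{\mathrm{ev},\rho_{0}}-\|\mathcal{N}\|_{\mathrm{ev},\rho_{0}}\geq c$.

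The second step follows the argument of \prettyref{thm:causaliyt} line by line, with $\partial_{t,\rho}\mathcal{M}+\mathcal{N}$ replaced by $\partial_{t,\rho}\mathcal{M}-\mathcal{M}'+\delta$. Given $f,g\in L_{2,\rho}(\R;H)$ with $f=g$ on $]-\infty,a]$, set $u\coloneqq\mathcal{S}_{\rho,\mathrm{aux}}(f)$, $v\coloneqq\mathcal{S}_{\rho,\mathrm{aux}}(g)$, approximate $f,g$ in $L_{2,\rho}(\R;H)$ by sequences $(f_{n})_{n},(g_{n})_{n}$ in $H_{\rho}^{1}(\R;H)$, and use \prettyref{cor:maxmonaux} to obtain $u_{n}\coloneqq\mathcal{S}_{\rho,\mathrm{aux}}(f_{n})\in\dom(\partial_{t,\rho}\mathcal{M})$ with $x_{n}$ such that $(u_{n},x_{n})\in\mathcal{A}_{\rho}$ and $\partial_{t,\rho}\mathcal{M}u_{n}-\mathcal{M}'u_{n}+\delta u_{n}+x_{n}=f_{n}$ (and likewise $g_{n},v_{n},y_{n}$). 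Subtracting the two equations, testing against $\chi_{\R_{\leq a}}(u_{n}-v_{n})$, and combining the Step 1 estimate with the monotonicity of $\chi_{\R_{\leq a}}\mathcal{A}_{\rho}$ (\prettyref{lem:evo_implies_causal}) gives $\Re\langle f_{n}-g_{n},\chi_{\R_{\leq a}}(u_{n}-v_{n})\rangle_{\rho}\geq c|\chi_{\R_{\leq a}}(u_{n}-v_{n})|_{\rho}^{2}$; passing to the limit using Lipschitz continuity of $\mathcal{S}_{\rho,\mathrm{aux}}$ and $\chi_{\R_{\leq a}}(f-g)=0$ forces $\chi_{\R_{\leq a}}(u-v)=0$, i.e. $u=v$ on $]-\infty,a]$.

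I expect Step 1 to be the only point of substance: it is where the hypothesis that $\mathcal{M}'$ (and not merely $\mathcal{M}$) is evolutionary enters, through its causality, in order to localise the lower-order terms onto $\chi_{\R_{\leq a}}u$, and where the size condition $\delta>\|\mathcal{M}'\|_{\mathrm{ev},\rho_{0}}+\|\mathcal{N}\|_{\mathrm{ev},\rho_{0}}$ is precisely what keeps the resulting monotonicity constant at least $c$. Everything in Step 2 is a routine transcription of the proof of \prettyref{thm:causaliyt}.
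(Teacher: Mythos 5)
Your proposal is correct and follows essentially the same route as the paper: the paper verifies Hypotheses \ref{hyp:causality} (b) for $\mathcal{N}$ replaced by $\delta-\mathcal{M}'$ via the identical decomposition $(\partial_{t,\rho}\mathcal{M}+\mathcal{N})+(\delta-\mathcal{M}'-\mathcal{N})$, localising the lower-order terms with \prettyref{lem:causality_evo} and using $\delta>\|\mathcal{M}'\|_{\mathrm{ev},\rho_{0}}+\|\mathcal{N}\|_{\mathrm{ev},\rho_{0}}$, and then simply invokes \prettyref{thm:causaliyt} rather than re-running its proof as your Step 2 does. The mathematical content — in particular your identification of Step 1 as the only point of substance — matches the paper exactly.
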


\begin{proof}
Let $\rho\geq\rho_{0}.$ By \prettyref{thm:causaliyt} it suffices
to prove that \prettyref{hyp:causality} hold if we replace $\mathcal{N}_{\rho}$
by $\delta-\mathcal{M}_{\rho}'$. Note that only \prettyref{hyp:causality}
(b) has to be verified for $\mathcal{N}$ replaced by $\delta-\mathcal{M}'.$
So, let $\varphi\in C_{c}^{\infty}(\R;H)$ and $a\in\R.$ Note that
by \prettyref{lem:evo_implies_causal} we have that 
\[
\Re\langle(\partial_{t}\mathcal{M}+\mathcal{N})\varphi,\chi_{\mathbb{R}_{\leq a}}\varphi\rangle_{\rho}\geq c|\chi_{\mathbb{R}_{\leq a}}\varphi|_{\rho}^{2}.
\]
Moreover, we note that by \prettyref{lem:causality_evo} we have 
\[
\chi_{\mathbb{R}_{\leq a}}(\mathcal{M}'+\mathcal{N})\varphi=\chi_{\mathbb{R}_{\leq a}}(\mathcal{M}'+\mathcal{N})\chi_{\mathbb{R}_{\le a}}\varphi.
\]
Hence, we can estimate 
\begin{align*}
\Re\langle(\partial_{t}\mathcal{M}-\mathcal{M}'+\delta)\varphi,\chi_{\mathbb{R}_{\leq a}}\varphi\rangle_{\rho} & =\Re\langle(\partial_{t}\mathcal{M}+\mathcal{N})\varphi,\chi_{\mathbb{R}_{\leq a}}\varphi\rangle_{\rho}+\Re\langle\delta\varphi-(\mathcal{M}'+\mathcal{N})\varphi,\chi_{\R_{\leq a}}\varphi\rangle_{\rho}\\
 & \geq c|\chi_{\mathbb{R}_{\leq a}}\varphi|_{\rho}^{2}+\delta|\chi_{\R_{\leq a}}\varphi|_{\rho}^{2}-\Re\langle\chi_{\R_{\leq a}}(\mathcal{M}'+\mathcal{N})\varphi,\chi_{\R_{\leq a}}\varphi\rangle_{\rho}\\
 & =c|\chi_{\mathbb{R}_{\leq a}}\varphi|_{\rho}^{2}+\delta|\chi_{\R_{\leq a}}\varphi|_{\rho}^{2}-\Re\langle(\mathcal{M}'+\mathcal{N})\chi_{\R_{\leq a}}\varphi,\chi_{\R_{\leq a}}\varphi\rangle_{\rho}\\
 & \geq\left(c+\delta-(\|\mathcal{M}'\|_{\mathrm{ev},\rho_{0}}+\|\mathcal{N}\|_{\mathrm{ev},\rho_{0}})\right)|\chi_{\mathbb{R}_{\leq a}}\varphi|_{\rho}^{2}\\
 & \ge c|\chi_{\mathbb{R}_{\leq a}}\varphi|_{\rho}^{2}.\tag*{\qedhere}
\end{align*}
\end{proof}
With this result at hand, we are able to prove the independence on
the parameter $\rho$ for the solution operator $\mathcal{S}_{\rho,\mathrm{aux}}$.
\begin{prop}
\label{prop:independence_aux}Assume \prettyref{hyp:independence}
and let $\delta>\|\mathcal{M}'\|_{\mathrm{ev},\rho_{0}}+\|\mathcal{N}\|_{\mathrm{ev},\rho_{0}}.$
Then, for each $\mu,\rho\geq\rho_{0}$ and each $f\in L_{2,\rho}(\R;H)\cap L_{2,\mu}(\R;H)$
we have that 
\[
\mathcal{S}_{\rho,\mathrm{aux}}(f)=\mathcal{\mathcal{S}}_{\mu,\mathrm{aux}}(f)
\]
as functions in $L_{1,\mathrm{loc}}(\R;H).$
\begin{proof}
We begin to prove $\mathcal{S}_{\rho,\mathrm{aux}}(f)=\mathcal{S}_{\rho_{0},\mathrm{aux}}(f)$
for all $\rho\geq\rho_{0}$ and $f\in C_{c}^{\infty}(\R;H).$ We set
\begin{align*}
u_{\rho_{0}} & \coloneqq\mathcal{S}_{\rho_{0}}(f)=(\partial_{0,\rho_{0}}\mathcal{M}_{\rho_{0}}-\mathcal{M}_{\rho_{0}}'+\delta+\mathcal{A}_{\rho_{0}})^{-1}(f)
\end{align*}
where we have used \prettyref{cor:maxmonaux}. Let $a\in\R$ such
that $\spt f\subseteq[a,\infty[.$ As $\mathcal{S}_{\rho_{0},\mathrm{aux}}(0)=0$,
since $(0,0)\in\mathcal{A}_{\rho_{0}}$, we derive form the causality
of $\mathcal{S}_{\rho_{0},\mathrm{aux}}$ (see \prettyref{prop:causality_aux})
that $\spt u_{\rho_{0}}\subseteq[a,\infty[$ and hence $u_{\rho_{0}}\in\bigcap_{\mu\geq\rho_{0}}L_{2,\mu}(\R;H).$
Moreover, by \prettyref{lem:causality_evo} and the definition of
$\partial_{t,\rho_{0}}$, $\partial_{t,\rho_{0}}\mathcal{M}_{\rho_{0}}u_{\rho_{0}}\in L_{2,\rho_{0}}(\R;H)$
is supported on $[a,\infty[$ as well and hence, $\partial_{t,\rho_{0}}\mathcal{M}_{\rho_{0}}u_{\rho_{0}}\in L_{2,\rho}(\R;H)$.
The latter gives $\partial_{t,\rho}^{-1}\partial_{t,\rho_{0}}\mathcal{M}_{\rho_{0}}u_{\rho_{0}}=\mathcal{M}_{\rho_{0}}u_{\rho_{0}}=\mathcal{M}_{\rho}u_{\rho_{0}}$
and hence, $\mathcal{M}_{\rho}u_{\rho_{0}}\in\dom(\partial_{t,\rho})$
with $\partial_{t,\rho}\mathcal{M}_{\rho}u_{\rho_{0}}=\partial_{t,\rho_{0}}\mathcal{M}_{\rho_{0}}u_{\rho_{0}}.$
Consequently, 
\[
\left(\partial_{0,\rho_{0}}\mathcal{M}_{\rho_{0}}-\mathcal{M}'_{\rho_{0}}+\delta\right)u_{\rho_{0}}=\left(\partial_{0,\rho}\mathcal{M}_{\rho}-\mathcal{M}'_{\rho}+\delta\right)u_{\rho_{0}}\in L_{2,\rho}(\R;H)
\]
and thus, 
\[
(u_{\rho_{0}},f-\left(\partial_{0,\rho}\mathcal{M}_{\rho}-\mathcal{M}'_{\rho}+\delta\right)u_{\rho_{0}})\in\mathcal{A}_{\rho}.
\]
The latter gives that $u_{\rho_{0}}$ satisfies 
\[
(u_{\rho_{0}},f)\in\partial_{0,\rho}\mathcal{M}_{\rho}-\mathcal{M}_{\rho}'+\delta+\mathcal{A}_{\rho}
\]
and hence $\mathcal{S}_{\rho_{0},\mathrm{aux}}(f)=u_{\rho_{0}}=\mathcal{S}_{\rho}(f).$\\
Let now $f\in L_{2,\rho}(\R;H)\cap L_{2,\mu}(\R;H).$ Then there exists
a sequence $(f_{n})_{n\in\N}$ in $C_{c}^{\infty}(\R;H)$ with $f_{n}\to f$
in $L_{2,\rho}(\R;H)$ and $L_{2,\mu}(\R;H)$ (see e.g. \cite[Lemma 3.5]{Trostorff2013_stability}).
By what we have shown above, we infer 
\[
\mathcal{S}_{\rho,\mathrm{aux}}(f)=\lim_{n\to\infty}\mathcal{S}_{\rho,\mathrm{aux}}(f_{n})=\lim_{n\to\infty}\mathcal{S}_{\mu,\mathrm{aux}}(f_{n})=\mathcal{S}_{\mu,\mathrm{aux}}(f).\tag*{\qedhere}
\]
\end{proof}
\end{prop}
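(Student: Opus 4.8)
The plan is to establish the identity first for $f\in C_{c}^{\infty}(\R;H)$, comparing each exponent $\rho\geq\rho_{0}$ with the reference exponent $\rho_{0}$, and then to reach the general case by density. Three ingredients from the preceding material do the work: the causality of $\mathcal{S}_{\rho_{0},\mathrm{aux}}$ (\prettyref{prop:causality_aux}), the support-preservation and exponent-consistency of evolutionary mappings (\prettyref{lem:causality_evo}), and the fact that both $\partial_{t,\cdot}^{-1}$ and the lifting $\mathcal{A}\mapsto\mathcal{A}_{\rho}$ act by the same prescription regardless of the weight.

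For the core step I would fix $\rho\geq\rho_{0}$ and $f\in C_{c}^{\infty}(\R;H)$ and set $u_{\rho_{0}}\coloneqq\mathcal{S}_{\rho_{0},\mathrm{aux}}(f)$, which by \prettyref{cor:maxmonaux} lies in $H_{\rho_{0}}^{1}(\R;H)$ and actually solves $(u_{\rho_{0}},f)\in\partial_{t,\rho_{0}}\mathcal{M}_{\rho_{0}}-\mathcal{M}'_{\rho_{0}}+\delta+\mathcal{A}_{\rho_{0}}$. Choosing $a\in\R$ with $\spt f\subseteq[a,\infty[$ and using $\mathcal{S}_{\rho_{0},\mathrm{aux}}(0)=0$ (because $(0,0)\in\mathcal{A}_{\rho_{0}}$), causality forces $\spt u_{\rho_{0}}\subseteq[a,\infty[$, so $u_{\rho_{0}}\in\bigcap_{\mu\geq\rho_{0}}L_{2,\mu}(\R;H)$. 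Then \prettyref{lem:causality_evo} gives $\mathcal{M}_{\rho_{0}}u_{\rho_{0}}=\mathcal{M}_{\rho}u_{\rho_{0}}$ with support in $[a,\infty[$, hence its distributional derivative $\partial_{t,\rho_{0}}\mathcal{M}_{\rho_{0}}u_{\rho_{0}}$ is supported there too and thus belongs to $L_{2,\rho}(\R;H)$; applying $\partial_{t,\rho}^{-1}$ (which agrees with $\partial_{t,\rho_{0}}^{-1}$ on this function, both acting as $\int_{-\infty}^{\cdot}$) yields $\mathcal{M}_{\rho}u_{\rho_{0}}\in\dom(\partial_{t,\rho})$ with $\partial_{t,\rho}\mathcal{M}_{\rho}u_{\rho_{0}}=\partial_{t,\rho_{0}}\mathcal{M}_{\rho_{0}}u_{\rho_{0}}$. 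I would then read off $(\partial_{t,\rho}\mathcal{M}_{\rho}-\mathcal{M}'_{\rho}+\delta)u_{\rho_{0}}=(\partial_{t,\rho_{0}}\mathcal{M}_{\rho_{0}}-\mathcal{M}'_{\rho_{0}}+\delta)u_{\rho_{0}}\in L_{2,\rho}(\R;H)$ and, since membership in $\mathcal{A}$ is a pointwise-a.e. condition not depending on the weight, conclude $(u_{\rho_{0}},f)\in\partial_{t,\rho}\mathcal{M}_{\rho}-\mathcal{M}'_{\rho}+\delta+\mathcal{A}_{\rho}$, i.e. $\mathcal{S}_{\rho,\mathrm{aux}}(f)=u_{\rho_{0}}=\mathcal{S}_{\rho_{0},\mathrm{aux}}(f)$.

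For the density step, given $f\in L_{2,\rho}(\R;H)\cap L_{2,\mu}(\R;H)$ I would take a single sequence $(f_{n})_{n}\subseteq C_{c}^{\infty}(\R;H)$ converging to $f$ in both $L_{2,\rho}(\R;H)$ and $L_{2,\mu}(\R;H)$ (as in \cite[Lemma 3.5]{Trostorff2013_stability}); the core step gives $\mathcal{S}_{\rho,\mathrm{aux}}(f_{n})=\mathcal{S}_{\rho_{0},\mathrm{aux}}(f_{n})=\mathcal{S}_{\mu,\mathrm{aux}}(f_{n})$, and letting $n\to\infty$ the Lipschitz continuity of $\mathcal{S}_{\rho,\mathrm{aux}}$ and $\mathcal{S}_{\mu,\mathrm{aux}}$ (\prettyref{cor:maxmonaux}) yields $\mathcal{S}_{\rho,\mathrm{aux}}(f)=\mathcal{S}_{\mu,\mathrm{aux}}(f)$; since convergence in $L_{2,\rho}(\R;H)$ implies convergence in $L_{1,\mathrm{loc}}(\R;H)$, the equality holds in $L_{1,\mathrm{loc}}(\R;H)$.

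The step I expect to be the main obstacle is the compatibility argument in the core step: making rigorous that $u_{\rho_{0}}$, manufactured in the weighted space with exponent $\rho_{0}$, is an admissible solution of the problem posed at exponent $\rho$ — that $\mathcal{M}_{\rho}u_{\rho_{0}}\in\dom(\partial_{t,\rho})$ with the expected derivative. Everything there hinges on combining causality (to confine the support of $u_{\rho_{0}}$), the support-preservation and consistency of the evolutionary operators, and the exponent-independence of $\partial_{t,\cdot}^{-1}$ and of the lifting; once these are in place, the remaining bookkeeping is routine.
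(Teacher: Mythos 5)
Your proposal is correct and follows essentially the same route as the paper's own proof: both first treat $f\in C_{c}^{\infty}(\R;H)$ by comparing $\rho$ with $\rho_{0}$, use causality of $\mathcal{S}_{\rho_{0},\mathrm{aux}}$ together with $\mathcal{S}_{\rho_{0},\mathrm{aux}}(0)=0$ to confine $\spt u_{\rho_{0}}$, transfer $\mathcal{M}u_{\rho_{0}}$ and its derivative to the $\rho$-weighted space via \prettyref{lem:causality_evo} and the common formula for $\partial_{t,\cdot}^{-1}$, and finish by density with a single approximating sequence and Lipschitz continuity. The compatibility step you flag as the main obstacle is handled exactly as you outline it in the paper.
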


\begin{thm}[Independence of the parameter $\rho$]
\label{thm:independence} Assume \prettyref{hyp:independence}. Then,
for each $\mu,\rho\geq\rho_{0}$ and each $f\in L_{2,\rho}(\R;H)\cap L_{2,\mu}(\R;H)$
we have that 
\[
\mathcal{S}_{\rho}(f)=\mathcal{\mathcal{S}}_{\mu}(f)
\]
as functions in $L_{1,\mathrm{loc}}(\R;H).$
\end{thm}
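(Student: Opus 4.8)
The plan is to transfer the already-established $\rho$-independence of the auxiliary solution operator (\prettyref{prop:independence_aux}) to $\mathcal{S}_{\rho}$ along the bounded perturbation $\mathcal{M}'+\mathcal{N}-\delta$ that was used in the proof of \prettyref{thm:main}. Fix $\delta>\|\mathcal{M}'\|_{\mathrm{ev},\rho_{0}}+\|\mathcal{N}\|_{\mathrm{ev},\rho_{0}}$; then $\delta>\|\mathcal{M}_{\rho}'\|+\|\mathcal{N}_{\rho}\|$ for \emph{every} $\rho\geq\rho_{0}$, so \prettyref{cor:maxmonaux} and \prettyref{prop:independence_aux} apply with this single $\delta$ at each such level. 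As in \prettyref{prop:independence_aux}, it suffices to prove $\mathcal{S}_{\rho}(f)=\mathcal{S}_{\rho_{0}}(f)$ for all $\rho\geq\rho_{0}$ and all $f\in C_{c}^{\infty}(\R;H)$: the general case follows by approximating $f\in L_{2,\rho}(\R;H)\cap L_{2,\mu}(\R;H)$ by a sequence in $C_{c}^{\infty}(\R;H)$ converging in both spaces (\cite[Lemma 3.5]{Trostorff2013_stability}), using the Lipschitz continuity of $\mathcal{S}_{\rho}$ and $\mathcal{S}_{\mu}$ and the fact that convergence in $L_{2,\rho}(\R;H)$ or $L_{2,\mu}(\R;H)$ implies convergence in $L_{1,\mathrm{loc}}(\R;H)$.

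So fix $f\in C_{c}^{\infty}(\R;H)$ with $\spt f\subseteq[a,\infty[$ and put $u\coloneqq\mathcal{S}_{\rho_{0}}(f)$. By \prettyref{lem:evo_implies_causal}, \prettyref{hyp:independence} entails \prettyref{hyp:causality} for $\mathcal{M}_{\rho_{0}},\mathcal{N}_{\rho_{0}},\mathcal{M}_{\rho_{0}}',\mathcal{A}_{\rho_{0}}$, hence $\mathcal{S}_{\rho_{0}}$ is causal by \prettyref{thm:causaliyt}. Since $(0,0)\in A$ we have $(0,0)\in\partial_{t,\rho_{0}}\mathcal{M}_{\rho_{0}}+\mathcal{N}_{\rho_{0}}+\mathcal{A}_{\rho_{0}}$, so $\mathcal{S}_{\rho_{0}}(0)=0$, and causality forces $\spt u\subseteq[a,\infty[$; in particular $u\in\bigcap_{\mu\geq\rho_{0}}L_{2,\mu}(\R;H)$.

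Next, recall from the proof of \prettyref{thm:main} the identity of relations
\[
\overline{\partial_{t,\rho}\mathcal{M}_{\rho}+\mathcal{N}_{\rho}+\mathcal{A}_{\rho}}=\overline{\partial_{t,\rho}\mathcal{M}_{\rho}-\mathcal{M}_{\rho}'+\delta+\mathcal{A}_{\rho}}+(\mathcal{M}_{\rho}'+\mathcal{N}_{\rho}-\delta),
\]
valid for every $\rho\geq\rho_{0}$ because the added term is everywhere defined and bounded. At level $\rho_{0}$ this says $u=\mathcal{S}_{\rho_{0},\mathrm{aux}}(g)$ with $g\coloneqq f-(\mathcal{M}_{\rho_{0}}'+\mathcal{N}_{\rho_{0}}-\delta)u$. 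Since $\mathcal{M}_{\rho_{0}}'$ and $\mathcal{N}_{\rho_{0}}$ are causal by \prettyref{lem:causality_evo}, we get $\spt g\subseteq[a,\infty[$, hence $g\in L_{2,\rho_{0}}(\R;H)\cap L_{2,\rho}(\R;H)$; moreover the first part of \prettyref{lem:causality_evo} gives $\mathcal{M}_{\rho}'u=\mathcal{M}_{\rho_{0}}'u$ and $\mathcal{N}_{\rho}u=\mathcal{N}_{\rho_{0}}u$, so also $g=f-(\mathcal{M}_{\rho}'+\mathcal{N}_{\rho}-\delta)u$. By \prettyref{prop:independence_aux}, $\mathcal{S}_{\rho_{0},\mathrm{aux}}(g)=\mathcal{S}_{\rho,\mathrm{aux}}(g)$, so $u=\mathcal{S}_{\rho,\mathrm{aux}}(g)$, i.e.\ $(u,g)\in\overline{\partial_{t,\rho}\mathcal{M}_{\rho}-\mathcal{M}_{\rho}'+\delta+\mathcal{A}_{\rho}}$. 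Reading the displayed identity backwards at level $\rho$ yields $(u,f)\in\overline{\partial_{t,\rho}\mathcal{M}_{\rho}+\mathcal{N}_{\rho}+\mathcal{A}_{\rho}}$, i.e.\ $u=\mathcal{S}_{\rho}(f)$, as desired.

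The part requiring the most care is the bookkeeping across the change of weight: that one $\delta$ works uniformly for all $\rho\geq\rho_{0}$ (which is precisely why the evolutionary norms enter), that the bounded perturbation $\mathcal{M}'+\mathcal{N}-\delta$ commutes with passing to closures, and---crucially---that the function $g$ constructed at level $\rho_{0}$ is \emph{literally the same} function one would use at level $\rho$, which rests on the support propagation from causality together with \prettyref{lem:causality_evo}. All of these are routine once isolated; the genuinely analytic content has already been dealt with in \prettyref{prop:independence_aux}.
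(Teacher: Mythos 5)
Your proposal is correct and follows essentially the same route as the paper: reduce to $f\in C_{c}^{\infty}(\R;H)$ by two-space approximation, use causality of $\mathcal{S}_{\rho_{0}}$ together with $\mathcal{S}_{\rho_{0}}(0)=0$ to get the support of $u_{\rho_{0}}$ bounded below, and then transfer along the bounded perturbation $\mathcal{M}'+\mathcal{N}-\delta$ via \prettyref{prop:independence_aux} and \prettyref{lem:causality_evo}. The paper compresses this into a chain of equivalences, but the content is identical; your version merely makes explicit the bookkeeping (same $\delta$ for all $\rho$, the right-hand side $g$ being literally the same function at both levels) that the paper leaves implicit.
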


\begin{proof}
As in the proof of \prettyref{prop:independence_aux} we first show
that for $f\in C_{c}^{\infty}(\R;H)$ we have that 
\[
\mathcal{S}_{\rho}(f)=\mathcal{S}_{\rho_{0}}(f)
\]
for each $\rho\geq\rho_{0}$. For doing so, define $u_{\rho_{0}}\coloneqq\mathcal{S}_{\rho_{0}}(f).$
Note that by \prettyref{thm:causaliyt} and since $\mathcal{S}_{\rho_{0}}(0)=0,$
we derive that $\spt u_{\rho_{0}}$ is bounded from below and hence,
$u_{\rho_{0}}\in L_{2,\rho}(\R;H).$ Thus, by \prettyref{lem:causality_evo}
and \prettyref{prop:independence_aux} we derive that
\begin{align*}
u_{\rho_{0}}=\mathcal{S}_{\rho_{0}}(f) & \Leftrightarrow u_{\rho_{0}}=\mathcal{S}_{\rho_{0},\mathrm{aux}}(f+(\delta-(\mathcal{M}_{\rho_{0}}'+\mathcal{N}_{\rho_{0}})u_{\rho_{0}})\\
 & \Leftrightarrow u_{\rho_{0}}=\mathcal{S}_{\rho,\mathrm{aux}}(f+(\delta-(\mathcal{M}_{\rho}'+\mathcal{N}_{\rho})u_{\rho_{0}})\\
 & \Leftrightarrow u_{\rho_{0}}=\mathcal{S}_{\rho}(f),
\end{align*}
which yields the claim. Since each function $f\in L_{2,\rho}(\R;H)\cap L_{2,\mu}(\R;H)$
can be approximated by one sequence of functions in $C_{c}^{\infty}(\R;H)$
in both spaces $L_{2,\rho}(\R;H)$ and $L_{2,\mu}(\R;H),$ the assertion
of the theorem follows.
\end{proof}

\section{An Application to semistatic quasilinear Maxwell's equations }

We consider the following variant of Maxwell's equations on a domain
$\Omega\subseteq\mathbb{R}^{3}$ in the semistatic case 
\begin{align}
\sigma E-\curl H & =-J,\nonumber \\
\partial_{t,\rho}B+\curl E & =0,\label{eq:Maxwell}
\end{align}
where $(E,H)\in L_{2,\rho}(\R;L_{2}(\Omega)^{3})\times L_{2,\rho}(\R;L_{2}(\Omega)^{3})$
denotes the electro-magnetic field and $B\in L_{2,\rho}(\R;L_{2}(\Omega)^{3})$
is the magnetic induction and $J\in L_{2,\rho}(\R;L_{2}(\Omega)^{3})$
is a given current density. The conductivity of the underlying medium
is denoted by $\sigma$ and will be specified later. The equations
are completed by the following constitutive relation linking $H$
and $B$
\begin{equation}
(H(t),B(t))\in\mathcal{Z}\quad(t\in\R\text{ a.e.})\label{eq:const}
\end{equation}
for some $c$-maximal monotone bounded relation $\mathcal{Z}\subseteq L_{2}(\Omega)^{3}\times L_{2}(\Omega)^{3}$
with $c>0$ and the boundary conditions 
\begin{align}
n\cdot\left(J+\sigma E\right) & =0,\text{ on }\partial\Omega\nonumber \\
n\cdot B & =0,\text{ on \ensuremath{\partial\Omega}},\label{eq:bd}
\end{align}
where $n$ denotes the outward unit normal vector on $\partial\Omega.$
It will turn out that in general the conditions \prettyref{eq:bd}
are not sufficient to ensure the uniqueness of a solution. We will
come to this point later.

We start by introducing some variants of the operators $\curl$ and
$\dive$.
\begin{defn*}
We define the operator $\dive_{0}$ as the closure of 
\[
C_{c}^{\infty}(\Omega)^{3}\subseteq L_{2}(\Omega)^{3}\to L_{2}(\Omega):\:(\psi_{i})_{i\in\{1,2,3\}}\mapsto\sum_{i=1}^{3}\partial_{i}\psi_{i}.
\]
In the same way, we define the operator $\curl_{0}$ as the closure
of 
\[
C_{c}^{\infty}(\Omega)^{3}\subseteq L_{2}(\Omega)^{3}\to L_{2}(\Omega)^{3}:\:(\psi_{i})_{i\in\{1,2,3\}}\mapsto(\partial_{2}\psi_{3}-\partial_{3}\psi_{2},\partial_{3}\psi_{1}-\partial_{1}\psi_{3},\partial_{1}\psi_{2}-\partial_{2}\psi_{1})^{\top}.
\]
Moreover, we define $\curl\coloneqq\left(\curl_{0}\right)^{\ast}.$ 
\end{defn*}
\begin{rem}
In case of a smooth boundary $\partial\Omega$ the elements in $\Psi\in\dom(\dive_{0})$
can be characterised as those $L_{2}$-vector fields, whose divergence
is an $L_{2}$-function and which satisfy the homogeneous Neumann
boundary condition 
\[
\Psi\cdot n=0.
\]

Note that $\Psi\in\dom(\dive_{0})$ also makes sense even if $\Omega$
has a non-smooth boundary, where the classical homogeneous Neumann
boundary condition could not be formulated via trace theorems. That
is why we do not need to require any regularity on $\Omega$ and may
use $\Psi\in\dom(\dive_{0})$ as a suitable generalised Neumann boundary
condition.

In the same way, the elements in $E\in\dom(\curl_{0})$ are those
$L_{2}$-vector fields with a curl representable as an $L_{2}$-vector
field and who satisfy the (generalised) electrical boundary condition
\[
E\times n=0.
\]
Moreover, by definition 
\[
\dom(\curl)=\{E\in L_{2}(\Omega)^{3}\,;\,\curl E\in L_{2}(\Omega)^{3}\},
\]
i.e., $\curl$ is the maximal realisation of the rotation as an operator
on $L_{2}(\Omega)^{3}.$ 
\end{rem}

Let $(E,B,H)$ be a solution of \prettyref{eq:Maxwell},\prettyref{eq:const}
and \prettyref{eq:bd} . From the first line of \prettyref{eq:Maxwell}
we read of that $\sigma E+J$ has a vanishing $L_{2}$-divergence
and since $\sigma E+J$ should satisfy \prettyref{eq:bd}, we infer
that indeed $E\in\ker(\dive_{0}).$ In the same way, $B\in\ker(\dive_{0}).$
We note that due to 
\[
\dive_{0}\curl_{0}=0
\]
it follows that $\overline{\ran(\curl_{0})}$ is a closed subspace
of $\ker(\dive_{0}).$ Hence, according to the projection theorem,
we can decompose $\ker(\dive_{0})$ by (see also \cite{MilaniPicard1988,Picard1998})
\begin{align}
\ker(\dive_{0}) & =\overline{\ran(\curl_{0})}\oplus\left(\ran(\curl_{0})\right)^{\bot}\cap\ker(\dive_{0})\nonumber \\
 & =\overline{\ran(\curl_{0})}\oplus\ker(\curl)\cap\ker(\dive_{0}).\label{eq:Helmholtz}
\end{align}
The set 
\[
H_{N}\coloneqq\ker(\curl)\cap\ker(\dive_{0})
\]
is known as the set of harmonic Neumann fields. Note that $H_{N}$
is a closed subspace of $L_{2}(\Omega)^{3}$ and we denote the orthogonal
projection onto $H_{N}$ by $P_{N}$. Additionally to the boundary
conditions \prettyref{eq:bd} we need to impose conditions on the
values $P_{N}(\sigma E+J)$ and $P_{N}B.$ For simplicity we set
\begin{equation}
P_{N}(\sigma E+J)=P_{N}B=0.\label{eq:constraints}
\end{equation}

For later reference, we summarise our so far found constraints: 
\begin{align}
\sigma E-\curl H & =-J,\nonumber \\
\partial_{t,\rho}B+\curl E & =0,\nonumber \\
(H(t),B(t)) & \in\mathcal{Z}\quad(t\in\R\text{ a.e.}),\nonumber \\
\sigma E+J,B & \in\ker(\dive_{0})\cap H_{N}^{\bot}.\label{eq:MAXWELL}
\end{align}

In order to incorporate the conditions \prettyref{eq:bd} and \prettyref{eq:constraints},
we need the following variant of the $\curl$ operator.
\begin{prop}[{\cite[Theorem 2.1]{Picard1998}}]
\label{prop:selfadjointness_curl} Assume that $\ran(\curl_{0})$
is closed. Then the operator 
\[
\tilde{\curl}:\dom(\tilde{\curl})\subseteq L_{2}(\Omega)^{3}\to L_{2}(\Omega)^{3}:\;W\mapsto\curl W
\]
with $\dom(\tilde{\curl})\coloneqq\left\{ W\in\dom(\curl)\,;\,\curl W\in\ran(\curl_{0})\right\} $
is selfadjoint.
\end{prop}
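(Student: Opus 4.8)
The plan is to compute the adjoint of $\tilde{\curl}$ explicitly and verify that $\tilde{\curl}^{\ast}=\tilde{\curl}$. Two standard facts will be used throughout. First, since $\curl_{0}$ is closed and densely defined, $\curl=(\curl_{0})^{\ast}$ is closed and densely defined with $\curl_{0}=(\curl_{0})^{\ast\ast}=\curl^{\ast}$. Second, by the usual range--kernel relation, $\overline{\ran(\curl_{0})}=\ker((\curl_{0})^{\ast})^{\bot}=\ker(\curl)^{\bot}$; invoking the hypothesis that $\ran(\curl_{0})$ is closed, this gives $\ran(\curl_{0})=\ker(\curl)^{\bot}$. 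I write $N\coloneqq\ker(\curl)$, so that $\ran(\curl_{0})=N^{\bot}$.

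First I would observe that $\tilde{\curl}$ is densely defined and extends $\curl_{0}$: for $\psi\in\dom(\curl_{0})$ one has $\curl\psi=\curl_{0}\psi\in\ran(\curl_{0})$, hence $\dom(\curl_{0})\subseteq\dom(\tilde{\curl})$ with $\tilde{\curl}\psi=\curl_{0}\psi$, and in particular $C_{c}^{\infty}(\Omega)^{3}\subseteq\dom(\tilde{\curl})$. Passing to adjoints in $\curl_{0}\subseteq\tilde{\curl}$ yields $\tilde{\curl}^{\ast}\subseteq(\curl_{0})^{\ast}=\curl$; thus every $V\in\dom(\tilde{\curl}^{\ast})$ satisfies $V\in\dom(\curl)$ and $\tilde{\curl}^{\ast}V=\curl V$.

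The main step is the reverse characterisation of $\dom(\tilde{\curl}^{\ast})$. Fix $V\in\dom(\curl)$; then $V\in\dom(\tilde{\curl}^{\ast})$ if and only if $\langle\curl W,V\rangle=\langle W,\curl V\rangle$ for every $W\in\dom(\tilde{\curl})$. For such a $W$ the defining condition gives $\curl W\in\ran(\curl_{0})$, so one may choose $\phi\in\dom(\curl_{0})$ with $\curl_{0}\phi=\curl W$; since $V\in\dom(\curl)=\dom((\curl_{0})^{\ast})$ this gives $\langle\curl W,V\rangle=\langle\curl_{0}\phi,V\rangle=\langle\phi,\curl V\rangle$, while $\curl(\phi-W)=\curl_{0}\phi-\curl W=0$ shows $\phi-W\in N$. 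Hence the condition on $V$ reads $\langle\phi-W,\curl V\rangle=0$ for all admissible pairs $(W,\phi)$. Conversely, every $n\in N$ occurs as such a difference, since $W\coloneqq-n\in N\subseteq\dom(\tilde{\curl})$ has $\curl W=0=\curl_{0}\cdot 0$, so $\phi=0$ is admissible and $\phi-W=n$. Therefore $V\in\dom(\tilde{\curl}^{\ast})$ if and only if $\curl V\bot N$, i.e.\ $\curl V\in N^{\bot}=\ran(\curl_{0})$, which is precisely the defining condition for $V\in\dom(\tilde{\curl})$. Combining with the previous paragraph, $\dom(\tilde{\curl}^{\ast})=\dom(\tilde{\curl})$ and $\tilde{\curl}^{\ast}$ acts as $\curl$ on this common domain, so $\tilde{\curl}^{\ast}=\tilde{\curl}$.

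The only point where the hypothesis is genuinely used is the identity $\ran(\curl_{0})=\ker(\curl)^{\bot}$: without the closedness assumption one would merely obtain $\dom(\tilde{\curl}^{\ast})=\{V\in\dom(\curl)\,;\,\curl V\in\overline{\ran(\curl_{0})}\}$, strictly larger than $\dom(\tilde{\curl})$ in general; everything else is routine bookkeeping with adjoints. An alternative route would be to first check the symmetry of $\tilde{\curl}$ by hand (writing both $\curl W$ and $\curl V$ as images under $\curl_{0}$) and then establish surjectivity of $\tilde{\curl}\pm\i$, but the direct computation of the adjoint above seems the most economical.
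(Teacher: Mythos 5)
The paper does not prove this proposition at all: it is imported verbatim from \cite[Theorem 2.1]{Picard1998}, so there is no in-paper argument to compare yours against, and your submission has to stand on its own. It does. The direct computation of $\tilde{\curl}^{\ast}$ is correct and complete: the inclusion $\curl_{0}\subseteq\tilde{\curl}$ gives $\tilde{\curl}^{\ast}\subseteq(\curl_{0})^{\ast}=\curl$, lifting $\curl W$ to $\curl_{0}\phi$ converts the adjoint identity into the orthogonality $\curl V\bot\ker(\curl)$, and the closed-range hypothesis identifies $\ker(\curl)^{\bot}$ with $\ran(\curl_{0})$, which is exactly the membership condition for $\dom(\tilde{\curl})$. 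The observation that \emph{every} element $n$ of $\ker(\curl)$ is realised as a difference $\phi-W$ (via $W=-n$, $\phi=0$, both admissible) is the step that upgrades the characterisation from an inclusion to an equality, and you have it; you also correctly isolate the closedness of $\ran(\curl_{0})$ as the only non-routine hypothesis. One ingredient you use silently and should state: both ``$\curl\psi=\curl_{0}\psi$ for $\psi\in\dom(\curl_{0})$'' and ``$\curl(\phi-W)=\curl_{0}\phi-\curl W$'' rest on the symmetry $\curl_{0}\subseteq(\curl_{0})^{\ast}=\curl$, which comes from the integration-by-parts identity $\langle\curl\phi,\psi\rangle=\langle\phi,\curl\psi\rangle$ for $\phi,\psi\in C_{c}^{\infty}(\Omega)^{3}$ (no boundary terms for compactly supported fields) followed by passage to the closure. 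With that one line added, your argument is a complete, self-contained proof of the cited result.
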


The assumption that $\ran(\curl_{0})$ is closed can for instance
be ensured by assuming the compactness of the embedding 
\[
\dom(\curl_{0})\cap\dom(\dive)\hookrightarrow L_{2}(\Omega)^{3}.
\]
We refer to \cite{Weck1974,Picard1984} for a proof of this result
even on a class of Riemannian manifolds with non-smooth boundaries
and to \cite{Witsch1993} for the case of a bounded domain whose boundary
is allowed to have certain cusps. 

We now come to the well-posedness result.
\begin{thm}
Let $\sigma,\sigma',\kappa:C_{c}^{\infty}(\R;L_{2}(\Omega)^{3})\to\bigcap_{\mu\geq\rho_{0}}L_{2,\mu}(\R;L_{2}(\Omega)^{3})$
be evolutionary at $\rho_{0}>0$ such that 
\[
(\sigma\varphi)'=\sigma\varphi'+\sigma'\varphi\quad(\varphi\in C_{c}^{\infty}(\R;H))
\]
and there exists $c_{1}>0$ such that
\[
\Re\langle\left(\partial_{t,\rho}\sigma+\kappa\right)\varphi,\chi_{\R_{\leq a}}\varphi\rangle_{\rho}\geq c_{1}|\chi_{\R_{\leq a}}\varphi|_{\rho}^{2}\quad(\varphi\in C_{c}^{\infty}(\R;H),a\in\R).
\]
Moreover, assume that $\ran(\curl_{0})\subseteq L_{2}(\Omega)^{3}$
is closed. Finally, let $\mathcal{Z}\subseteq L_{2}(\Omega)^{3}\times L_{2}(\Omega)^{3}$
be $c$-maximal monotone and bounded with $(0,0)\in\mathcal{Z}$.
Then for each $\rho\geq\rho_{0}$ 
\[
\mathcal{S}_{\rho,\mathrm{Max}}\coloneqq\left(\overline{\partial_{t,\rho}\left(\begin{array}{cc}
\sigma_{\rho} & 0\\
0 & 0
\end{array}\right)+\left(\begin{array}{cc}
\kappa_{p} & 0\\
0 & c
\end{array}\right)+\left(\begin{array}{cc}
0 & -\tilde{\curl}\\
\tilde{\curl} & \mathcal{Z}-c
\end{array}\right)_{\rho}}\right)^{-1}
\]
is a Lipschitz-continuous, causal mapping on $L_{2,\rho}(\R;L_{2}(\Omega)^{3}\times L_{2}(\Omega)^{3})$
and independent of $\rho$ in the sense of \prettyref{thm:independence}.
\end{thm}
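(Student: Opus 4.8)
The strategy is to recognise the Maxwell system \prettyref{eq:MAXWELL} as an instance of the abstract inclusion \prettyref{eq:prob} and then verify \prettyref{hyp:independence}, so that \prettyref{thm:main}, \prettyref{thm:causaliyt} and \prettyref{thm:independence} apply directly. Concretely, set $H\coloneqq L_{2}(\Omega)^{3}\times L_{2}(\Omega)^{3}$ (for the abstract space), take
\[
\mathcal{M}\coloneqq\left(\begin{array}{cc}\sigma & 0\\0 & 0\end{array}\right),\quad\mathcal{M}'\coloneqq\left(\begin{array}{cc}\sigma' & 0\\0 & 0\end{array}\right),\quad\mathcal{N}\coloneqq\left(\begin{array}{cc}\kappa & 0\\0 & c\end{array}\right),
\]
and let $\mathcal{A}\subseteq H\times H$ be the relation induced (via \prettyref{lem:lifting}, i.e. pointwise in $t$) by
\[
A\coloneqq\left(\begin{array}{cc}0 & -\tilde{\curl}\\\tilde{\curl} & \mathcal{Z}-c\end{array}\right),
\]
where $\tilde{\curl}$ is the selfadjoint operator of \prettyref{prop:selfadjointness_curl}, which is legitimate since $\ran(\curl_{0})$ is assumed closed. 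The first step is thus to unwind the definition of $\mathcal{S}_{\rho,\mathrm{Max}}$ and observe that $(u,f)\in\overline{\partial_{t,\rho}\mathcal{M}+\mathcal{N}+\mathcal{A}}$ with $u=(E,H)$, $f=(-J,0)$ reproduces exactly \prettyref{eq:MAXWELL} (the second block line recovers $\partial_{t,\rho}B+\curl E=0$ together with the constraint that $\curl E$ and $B$ lie in $\ran(\curl_{0})$, which encodes the boundary and harmonic-field conditions).

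**Verifying the hypotheses.** Next I would check \prettyref{hyp:independence} for these data. Part (a) is immediate from the product-rule assumption $(\sigma\varphi)'=\sigma\varphi'+\sigma'\varphi$, since the remaining blocks of $\mathcal{M}$ vanish. Part (b) is the heart of the positive-definiteness check: for $\varphi=(\varphi_{1},\varphi_{2})\in C_{c}^{\infty}(\R;H)$ one computes
\[
\Re\langle(\partial_{t,\rho}\mathcal{M}+\mathcal{N})\varphi,\chi_{\R_{\leq a}}\varphi\rangle_{\rho}=\Re\langle(\partial_{t,\rho}\sigma+\kappa)\varphi_{1},\chi_{\R_{\leq a}}\varphi_{1}\rangle_{\rho}+c\,|\chi_{\R_{\leq a}}\varphi_{2}|_{\rho}^{2}\geq\min\{c_{1},c\}\,|\chi_{\R_{\leq a}}\varphi|_{\rho}^{2},
\]
using the assumed coercivity of $\partial_{t,\rho}\sigma+\kappa$ for the first block and the constant $c$ in the second. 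Finally, $A$ is maximal monotone with $(0,0)\in A$: the off-diagonal block $\begin{psmallmatrix}0&-\tilde{\curl}\\\tilde{\curl}&0\end{psmallmatrix}$ is skew-selfadjoint (hence maximal monotone) because $\tilde{\curl}$ is selfadjoint by \prettyref{prop:selfadjointness_curl}; adding the block $\begin{psmallmatrix}0&0\\0&\mathcal{Z}-c\end{psmallmatrix}$, which is maximal monotone (since $\mathcal{Z}$ is $c$-maximal monotone) and bounded with $0$ in its domain, keeps the sum maximal monotone by \prettyref{cor:bounded_pert}, and $(0,0)\in A$ since $(0,0)\in\mathcal{Z}$. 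Applying \prettyref{thm:main}, \prettyref{thm:causaliyt} (noting \prettyref{hyp:independence} $\Rightarrow$ \prettyref{hyp:causality} via \prettyref{lem:evo_implies_causal}) and \prettyref{thm:independence} then yields the claimed Lipschitz continuity with constant $1/\min\{c_{1},c\}$, causality, and $\rho$-independence.

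**Main obstacle.** The one genuinely non-routine point is the verification that the operator $A$ above is maximal monotone on $H=L_{2}(\Omega)^{3}\times L_{2}(\Omega)^{3}$, i.e. that the spatial constraints hidden in $\dom(\tilde{\curl})$ (namely $\curl W\in\ran(\curl_{0})$, which via \prettyref{eq:Helmholtz} amounts to $\curl W\in\ker(\dive_{0})\cap H_{N}^{\bot}$) are exactly the right ones to make the block operator selfadjoint/skew-selfadjoint; this is precisely where the hypothesis that $\ran(\curl_{0})$ is closed enters, through \prettyref{prop:selfadjointness_curl}. Everything downstream — combining the skew part with the bounded maximal monotone perturbation $\begin{psmallmatrix}0&0\\0&\mathcal{Z}-c\end{psmallmatrix}$ via \prettyref{cor:bounded_pert}, lifting pointwise-in-$t$ via \prettyref{lem:lifting}, and invoking the three abstract theorems — is then a mechanical translation. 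A secondary subtlety worth a remark is non-uniqueness: without the harmonic-Neumann-field constraints encoded in $\dom(\tilde{\curl})$ one would only get maximal monotonicity of the unrestricted $\curl$, which is \emph{not} skew-selfadjoint when $H_{N}\neq\{0\}$; this is the manifestation of the "boundary conditions not sufficient" phenomenon flagged before \prettyref{eq:constraints}.
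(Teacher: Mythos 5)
Your proposal is correct and follows essentially the same route as the paper: establish maximal monotonicity of the spatial block relation by combining the skew-selfadjoint curl block (via \prettyref{prop:selfadjointness_curl}) with the bounded maximal monotone perturbation $\mathcal{Z}-c$ through \prettyref{cor:bounded_pert}, lift it using \prettyref{lem:lifting}, verify \prettyref{hyp:independence} for the block operators $\mathcal{M},\mathcal{M}',\mathcal{N}$, and invoke \prettyref{thm:main}, \prettyref{thm:causaliyt} and \prettyref{thm:independence}. You spell out the coercivity estimate in \prettyref{hyp:independence} (b) more explicitly than the paper does; the only slight imprecision is in your framing of the link to \prettyref{eq:MAXWELL}, where the first solution component is $\partial_{t,\rho}^{-1}E$ rather than $E$, but this does not affect the proof of the stated theorem.
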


\begin{proof}
As $\tilde{\curl}$ is selfadjoint, we infer that 
\[
\left(\begin{array}{cc}
0 & -\tilde{\curl}\\
\tilde{\curl} & 0
\end{array}\right)
\]
is skew-selfadjoint and hence, maximal monotone. Moreover $\mathcal{Z}-c$
is maximal monotone by \prettyref{lem:lifting} and bounded and thus,
\[
\mathcal{A}\coloneqq\left(\begin{array}{cc}
0 & -\tilde{\curl}\\
\tilde{\curl} & \mathcal{Z}-c
\end{array}\right)
\]
is maximal monotone by \prettyref{cor:bounded_pert}. Moreover, $(0,0)\in\mathcal{A}$
and hence, the operators $\mathcal{M}\coloneqq\left(\begin{array}{cc}
\sigma & 0\\
0 & 0
\end{array}\right),\mathcal{N}\coloneqq\left(\begin{array}{cc}
\kappa & 0\\
0 & c
\end{array}\right)$ and the relation $\mathcal{A}$ satisfy \prettyref{hyp:independence}.
Thus, the assertion follows by \prettyref{thm:main}, \prettyref{thm:causaliyt}
and \prettyref{thm:independence}.
\end{proof}
\begin{rem}
\begin{enumerate}[(a)]

\item A particular instance of operators $\sigma,\kappa$ satisfying
the assumptions of the previous theorem are multiplication operator
associated with a Lipschitz-continuous, bounded mapping $\sigma,\kappa:\R\to L(L_{2}(\Omega)^{3})$
such that $\sigma(t)$ is injective, selfadjoint and uniformly strictly
positive definite for each $t\in\R$. Those operators were considered
in \cite{Picard2013_nonauto,Trostorff2013_nonautoincl}.

\item In case of $\mathcal{Z}$ being the inverse of a Lipschitz-continuous,
$c$-monotone mapping, $\sigma$ a positive constant number and $\kappa=0$,
the well-posedness result was already obtained in \cite{Milani_Picard1995}
employing a Galerkin approximation method.

\end{enumerate}
\end{rem}

To conclude this section, we show how the mapping $\mathcal{S}_{\rho,\mathrm{Max}}$
indeed (at least formally\footnote{By assuming enough regularity of $J$, the computations could be made
rigorous.}) provides a solution of our problem \prettyref{eq:MAXWELL}.

Assume that $(E,B,H)$ satisfy \prettyref{eq:MAXWELL}. As $B$ and
$\sigma E+J$ lie in $\ker(\dive_{0})\cap H_{N}^{\bot}$ by \prettyref{eq:bd}
and \prettyref{eq:constraints}, we derive from \prettyref{eq:Helmholtz}
that 
\[
B,\sigma E+J\in\ran(\curl_{0}).
\]
However, 
\[
\partial_{t,\rho}B=-\curl E,\:\sigma E+J=\curl H
\]
and thus, $H,E\in\dom(\tilde{\curl}).$ Thus, we have the equations
\begin{align*}
\sigma E-\tilde{\curl}H & =-J,\\
\partial_{t,\rho}B+\tilde{\curl}E & =0.
\end{align*}
Defining $\tilde{E}\coloneqq\partial_{t,\rho}^{-1}E$ we get 
\begin{align*}
\partial_{t,\rho}\sigma\tilde{E}-\sigma'\tilde{E}-\tilde{\curl}H & =-J,\\
B+\tilde{\curl}\tilde{E} & =0,
\end{align*}
where we have used $\sigma\partial_{t,\rho}\tilde{E}=\partial_{t,\rho}\sigma\tilde{E}-\sigma'\tilde{E}.$
Invoking \prettyref{eq:const}, we derive that
\[
\left((\tilde{E},H),(-J,0)\right)\in\partial_{t,\rho}\left(\begin{array}{cc}
\sigma_{\rho} & 0\\
0 & 0
\end{array}\right)+\left(\begin{array}{cc}
-\sigma'_{\rho} & 0\\
0 & c
\end{array}\right)+\left(\begin{array}{cc}
0 & -\tilde{\curl}\\
\tilde{\curl} & \mathcal{Z}-c
\end{array}\right)_{\rho}.
\]
On the other, following this argumentation backwards, we infer that
(for $\kappa=-\sigma'$) 
\[
(\tilde{E},H)\coloneqq\mathcal{S}_{\rho,\mathrm{Max}}(-J,0)
\]
yields a solution $(E,B,H)$ of \prettyref{eq:MAXWELL} with $E\coloneqq\partial_{t,\rho}\tilde{E}$
and $B=-\tilde{\curl}\tilde{E}.$


\begin{thebibliography}{10}
\bibitem{Benilan1972} P.~B{\'e}nilan. \newblock {\em {{\'E}quations d'{\'e}volution dans un espace de Banach quelconque et applications}}. \newblock PhD thesis, Universit{\'e} Paris XI, 1972.
\bibitem{Brezis} H.~Brezis. \newblock {\em Operateurs maximaux monotones et semi-groupes de contractions dans les espaces de Hilbert}. \newblock Universite Paris VI et CNRS, 1971.
\bibitem{Crandall1972} M.~Crandall and A.~Pazy. \newblock {Nonlinear evolution equations in Banach spaces.} \newblock {\em Isr. J. Math.}, 11:57--94, 1972.
\bibitem{engel2000one} K.~J. Engel and R.~Nagel. \newblock {\em {One-parameter semigroups for linear evolution equations}}. \newblock Graduate texts in mathematics. Springer, 2000.
\bibitem{Evans1977} L.~Evans. \newblock {Nonlinear evolution equations in an arbitrary Banach space.} \newblock {\em Isr. J. Math.}, 26:1--42, 1977.
\bibitem{papageogiou} S.~Hu and N.~S. Papageorgiou. \newblock {\em Handbook of Multivalued Analysis}, volume 1: Theory. \newblock Springer, 1997.
\bibitem{Kalauch2011} A.~{Kalauch}, R.~{Picard}, S.~{Siegmund}, S.~{Trostorff}, and M.~{Waurick}. \newblock {A Hilbert space perspective on ordinary differential equations with memory term.} \newblock {\em {J. Dyn. Differ. Equations}}, 26(2):369--399, 2014.
\bibitem{Kato1953} T.~Kato. \newblock {Integration of the equation of evolution in a Banach space.} \newblock {\em J. Math. Soc. Japan}, 5:208--234, 1953.
\bibitem{Kobayasi1984} K.~Kobayasi, Y.~Kobayashi, and S.~Oharu. \newblock {Nonlinear evolution operators in Banach spaces.} \newblock {\em Osaka J. Math.}, 21:281--310, 1984.
\bibitem{Picard2010_poroelastic} D.~F. {McGhee} and R.~{Picard}. \newblock {A note on anisotropic, inhomogeneous, poro-elastic media.} \newblock {\em {Math. Methods Appl. Sci.}}, 33(3):313--322, 2010.
\bibitem{MilaniPicard1988} A.~{Milani} and R.~{Picard}. \newblock {Decomposition theorems and their application to nonlinear electro- and magneto-static boundary value problems.} \newblock {Partial differential equations and calculus of variations, Lect. Notes Math. 1357, 317-340 (1988).}, 1988.
\bibitem{Milani_Picard1995} A.~{Milani} and R.~{Picard}. \newblock {Weak solution theory for Maxwell's equations in the semistatic limit case.} \newblock {\em {J. Math. Anal. Appl.}}, 191(1):77--100, 1995.
\bibitem{Minty} G.~Minty. \newblock Monotone (nonlinear) operators in a hilbert space. \newblock {\em Duke Math. J.}, 29, 1962.
\bibitem{Morosanu} G.~Morosanu. \newblock {\em Nonlinear evolution equations and applications}. \newblock Springer, 2nd edition, 1988.
\bibitem{Nagel1997} R.~{Nagel}. \newblock {Extrapolation spaces for semigroups.} \newblock {\em {RIMS Kokyuroku}}, 1009:181--191, 1997.
\bibitem{Picard1984} R.~Picard. \newblock An elementary proof for a compact imbedding result in generalized electromagnetic theory. \newblock {\em Math. Z.}, 187:151--164, 1984.
\bibitem{Picard1998} R.~{Picard}. \newblock {On a selfadjoint realization of curl and some of its applications.} \newblock {\em {Ric. Mat.}}, 47(1):153--180, 1998.
\bibitem{Picard} R.~Picard. \newblock {A structural observation for linear material laws in classical mathematical physics.} \newblock {\em Math. Methods Appl. Sci.}, 32(14):1768--1803, 2009.
\bibitem{Picard2012_Impedance} R.~Picard. \newblock A class of evolutionary problems with an application to acoustic waves with impedance type boundary conditions. \newblock In {\em Spectral Theory, Mathematical System Theory, Evolution Equations, Differential and Difference Equations}, volume 221 of {\em Operator Theory: Advances and Applications}, pages 533--548. Springer Basel, 2012.
\bibitem{Picard_McGhee} R.~Picard and D.~McGhee. \newblock {\em {Partial differential equations. A unified Hilbert space approach.}} \newblock {de Gruyter Expositions in Mathematics 55. Berlin: de Gruyter. xviii}, 2011.
\bibitem{Picard2013_fractional} R.~{Picard}, S.~{Trostorff}, and M.~{Waurick}. \newblock {On evolutionary equations with material laws containing fractional integrals.} \newblock {\em {Math. Methods Appl. Sci.}}, 38(15):3141--3154, 2015.
\bibitem{Picard2015_micropoloar} R.~{Picard}, S.~{Trostorff}, and M.~{Waurick}. \newblock {On some models for elastic solids with micro-structure.} \newblock {\em {ZAMM, Z. Angew. Math. Mech.}}, 95(7):664--689, 2015.
\bibitem{Picard2014_survey} R.~Picard, S.~Trostorff, and M.~Waurick. \newblock {Well-posedness via Monotonicity. An Overview.} \newblock In W.~Arendt, R.~Chill, and Y.~Tomilov, editors, {\em Operator Semigroups Meet Complex Analysis, Harmonic Analysis and Mathematical Physics}, number 250 in Operator Theory: Advances and Applications, pages 397--452. Springer International Publishing, 2015.
\bibitem{Picard2013_nonauto} R.~Picard, S.~Trostorff, M.~Waurick, and M.~Wehowski. \newblock {On Non-autonomous Evolutionary Problems.} \newblock {\em J. Evol. Equ.}, 13(4):751--776, 2013.
\bibitem{Trostorff_2011} S.~Trostorff. \newblock {\em Well-posedness and causality for a class of evolutionary inclusions}. \newblock PhD thesis, TU Dresden, 2011. \newblock URL: \url{http://nbn-resolving.de/urn:nbn:de:bsz:14-qucosa-78325}.
\bibitem{Trostorff2012_NA} S.~Trostorff. \newblock {An alternative approach to well-posedness of a class of differential inclusions in Hilbert spaces.} \newblock {\em Nonlinear Anal., Theory Methods Appl., Ser. A, Theory Methods}, 75(15):5851--5865, 2012.
\bibitem{Trostorff2012_nonlin_bd} S.~Trostorff. \newblock {Autonomous Evolutionary Inclusions with Applications to Problems with Nonlinear Boundary Conditions.} \newblock {\em Int. J. Pure Appl. Math.}, 85(2):303--338, 2013.
\bibitem{Trostorff2013_stability} S.~Trostorff. \newblock {Exponential Stability for Linear Evolutionary Equations.} \newblock {\em Asymptotic Anal.}, 85:179--197, 2013.
\bibitem{Trostorff2012_integro} S.~Trostorff. \newblock {On Integro-Differential Inclusions with Operator-valued Kernels.} \newblock {\em Math. Methods Appl. Sci.}, 38(5):834--850, 2015.
\bibitem{Trostorff_habil} S.~Trostorff. \newblock {\em Exponential Stability and Initial Value Problems for Evolutionary Equations}. \newblock Habilitation thesis, TU Dresden, 2018. \newblock URL: \url{http://nbn-resolving.de/urn:nbn:de:bsz:14-qucosa-236494}.
\bibitem{Trostorff2013_nonautoincl} S.~{Trostorff} and M.~{Wehowski}. \newblock {Well-posedness of non-autonomous evolutionary inclusions.} \newblock {\em {Nonlinear Anal., Theory Methods Appl., Ser. A, Theory Methods}}, 101:47--65, 2014.
\bibitem{Wauricl2014} M.~{Waurick}. \newblock {$G$-convergence of linear differential equations.} \newblock {\em {Z. Anal. Anwend.}}, 33(4):385--415, 2014.
\bibitem{Waurick2013_causality} M.~Waurick. \newblock {A note on causality in reflexive Banach spaces}. \newblock {\em Indag. Math.}, 26(2):404--412, 2015.
\bibitem{Waurick2015_nonauto} M.~{Waurick}. \newblock {On non-autonomous integro-differential-algebraic evolutionary problems.} \newblock {\em {Math. Methods Appl. Sci.}}, 38(4):665--676, 2015.
\bibitem{Waurick_habil} M.~Waurick. \newblock {\em On the continuous dependence on the coefficients of evolutionary equations}. \newblock Habilitation thesis, TU Dresden, 2016. \newblock arXiv: 1606.07731.
\bibitem{Weck1974} N.~{Weck}. \newblock {Maxwell's boundary value problem on Riemannian manifolds with nonsmooth boundaries.} \newblock {\em {J. Math. Anal. Appl.}}, 46:410--437, 1974.
\bibitem{Witsch1993} K.~J. {Witsch}. \newblock {A remark on a compactness result in electromagnetic theory.} \newblock {\em {Math. Methods Appl. Sci.}}, 16(2):123--129, 1993.
\end{thebibliography}
\end{document}